\newcommand{\customitemize}[1]{%
	\begin{enumerate}[label={(#1\arabic*)}, ref=#1\arabic*]
	}
\title[Fundamental groups of $4$-manifolds]{Nonnegative Ricci Curvature, Euclidean Volume Growth, and the Fundamental Groups of Open $4$-Manifolds}
\author{Hongzhi Huang and Xian-Tao Huang}
\address{Hongzhi Huang \\ Department of Mathematics \\ Jinan University\\ Guangzhou 510632}
\email{\href{mailto:huanghz@jnu.edu.cn}{huanghz@jnu.edu.cn}
}
\address{Xian-Tao Huang\\School of Mathematics\\  Sun Yat-sen University\\ Guangzhou 510275}
\email{\href{mailto:hxiant@mail2.sysu.edu.cn}{hxiant@mail2.sysu.edu.cn}}
\newtheorem{thm}{Theorem}[section]
\newtheorem{lem}[thm]{Lemma}
\newtheorem{slem}[thm]{Sublemma}
\newtheorem{cor}[thm]{Corollary}
\newtheorem{claim}[thm]{Claim}
\newtheorem{mainthm}{Theorem}[section] 
\newaliascnt{myMainThm}{mainthm}        
\newtheorem*{myMainThm*}{Theorem \Alph{mainthm}} 
\newenvironment{maintheorem}[1][]
{\begin{mainthm}[#1]}
	{\end{mainthm}}
\theoremstyle{definition}
\theoremstyle{remark}
\newtheorem{defn}[thm]{Definition}
\numberwithin{equation}{section}
\newcommand {\diam }{\mathrm{diam}}
\newcommand {\Isom }{\mathrm{Isom}}
\newcommand {\vol }{\mathrm{vol}}
\newcommand {\id}{\mathrm{id}}
\newcommand {\Z}{\mathbb{Z}}
\newcommand {\R}{\mathbb{R}}
\newcommand {\GH}{\xrightarrow{GH}}
\newcommand {\pGH}{\xrightarrow{GH}}
\newcommand {\spa}[1]{\langle{#1}\rangle}
\newcommand {\Ric}{\mathrm{Ric}}
\newcommand {\RCD}{\mathrm{RCD}}
\newcommand{\XXint}[3]{{
		\setbox0=\hbox{$#1{#2#3}{\int}$}
		\vcenter{\hbox{$#2#3$}}\kern-.5\wd0}}
\begin{document}

\maketitle
\begin{abstract}

Let $M$ be a 4-dimensional open manifold with nonnegative Ricci curvature. In this paper, we prove that if the universal cover of $M$ has Euclidean volume growth, then the fundamental group $\pi_1(M)$ is finitely generated. This result confirms Pan-Rong's conjecture \cite{PR18} for dimension $n = 4$. Additionally, we prove that there exists a universal constant $C>0$ such that $\pi_1(M)$ contains an abelian subgroup of index $\le C$. More specifically, if $\pi_1(M)$ is infinite, then $\pi_1(M)$ is a crystallographic group of rank $\le 3$. If $\pi_1(M)$ is finite, then $\pi_1(M)$ is isomorphic to a quotient of the fundamental group of a spherical 3-manifold.

	

\end{abstract}

\setcounter{tocdepth}{1}
\tableofcontents

\section{Introduction}\label{section1}

In this paper, we investigate the fundamental group of $4$-dimensional open manifolds with nonnegative Ricci curvature, whose universal cover has Euclidean volume growth.

Recall that, for an open $n$-manifold $M$ with nonnegative Ricci curvature, the following limit always exists and does not depend on the choice of $p\in M$,
	\begin{equation}\label{Def:EuclideanVolumeGrowth}
		\lim_{r\to\infty}\frac{\vol(B_r(p))}{r^n}=\nu,
	\end{equation}
	where $\vol(B_r(p))$ stands for the volume of the $r$-geodesic ball around $p$. If $\nu>0$ in (\ref{Def:EuclideanVolumeGrowth}), then we say that $M$ has Euclidean volume growth.
In this case, according to Cheeger-Colding's theory (\cite{CC96}), any asymptotic cone of $M$ is a metric cone over a length metric space $Z$, denoted by $C(Z)$, with its reference point as a cone vertex and the diameter of $Z$ satisfying $\diam(Z) \le \pi$.

The first main result of this paper is about the finite generation of fundamental groups.	In \cite{Mi68}, Milnor conjectured that the fundamental group of an open manifold with nonnegative Ricci curvature is finitely generated. Currently, this conjecture has been affirmed for dimensions $ n = 2 $ and $ 3 $ (\cite{CV35,Liu13,Pan20}), while it has been shown to be false for dimensions $n \geq 6 $ (\cite{BNS23,BNS23-II}). Therefore, the only unresolved cases remain in dimensions $ n = 4 $ and $ 5 $. From another perspective, it is also worthwhile to explore the verification of Milnor's conjecture under appropriate additional restrictions. For progress in this area, we refer the reader to the survey presented in \cite{Pan20a}. In \cite{PR18}, Pan and Rong conjectured that if the universal cover has Euclidean volume growth, then Milnor's conjecture holds.
Pan-Rong's conjecture was restated in \cite{BNS23-II} and it remains open to this day. Our first result can be regarded as a verification of the Pan-Rong's conjecture in the case of dimension $n = 4$.

\begin{maintheorem}\label{thm-finite generation}
	Let $M$ be an open 4-manifold with nonnegative Ricci curvature. If the Riemannian universal cover $\tilde{M}$ has Euclidean volume growth, then $\pi_{1}(M)$ is finitely generated.
\end{maintheorem}

We say that a metric space $X$ splits an $\R^k$-factor if it is isometric to a metric product of the form $\R^k \times Z$. The high-dimensional version of Theorem \ref{thm-finite generation} is as follows.

\begin{thm}\label{thm-finite generation'}
	Let $M$ be an open $n$-manifold with nonnegative Ricci curvature ($n\ge 4$). If the Riemannian universal cover $\tilde{M}$ has Euclidean volume growth, and every asymptotic cone of $M$ splits an $\R^{n-4}$-factor, then $\pi_{1}(M)$ is finitely generated.
\end{thm}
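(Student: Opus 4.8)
The plan is to argue by contradiction using Pan's critical-rescaling method, with the hypothesis on asymptotic cones serving to reduce the limit analysis to an essentially $4$-dimensional one, where the structure theory behind Theorem \ref{thm-finite generation} applies. Suppose $\Gamma:=\pi_1(M)$ is not finitely generated. Fix a lift $\tilde p\in\tilde M$ of $p$ and choose a sequence of shortest generators $\gamma_1,\gamma_2,\dots\in\Gamma$ greedily, so that $\gamma_i$ has minimal displacement $\ell_i:=d(\gamma_i\tilde p,\tilde p)$ among all elements of $\Gamma\setminus\langle\gamma_1,\dots,\gamma_{i-1}\rangle$; since $\Gamma$ is not finitely generated, $\ell_i\to\infty$. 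Rescale and consider the pointed equivariant spaces $(\tilde M,\tilde p,\ell_i^{-1}g,\Gamma)$. By relative volume comparison and Gromov precompactness, a subsequence converges in the equivariant pointed Gromov--Hausdorff sense to $(\tilde Y,\tilde y,G)$, where $\tilde Y$ is an asymptotic cone of $\tilde M$ and $G\le\Isom(\tilde Y)$ is a closed subgroup with $\tilde Y/G$ the corresponding asymptotic cone of $M$. Since $\tilde M$ has Euclidean volume growth, $\tilde Y=C(\tilde Z)$ is a metric cone of Hausdorff dimension $n$ over a compact geodesic space $\tilde Z$ with $\diam(\tilde Z)\le\pi$, $G$ is a Lie group, and each $\gamma_i$ subconverges to an element $g\in G$ with $d(g\tilde y,\tilde y)=1$.

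Next I would exploit the splitting hypothesis to peel off the extra dimensions. The quotient $\tilde Y/G$ is an asymptotic cone of $M$, hence isometric to $\R^{n-4}\times C(Z')$ for a compact $C(Z')$ of Hausdorff dimension $\le 4$, and since $\tilde Y/G$ is itself a cone, $Z'$ is a spherical-join factor of its cross-section. I would then show this $\R^{n-4}$-factor is produced by a closed, connected, abelian (``translational'') normal subgroup $N\trianglelefteq G$: lifting the Euclidean lines of $\tilde Y/G$ to $\tilde Y$ via a Cheeger--Gromoll-type argument forces a de Rham-type decomposition preserved by $G$, whose $\R^{n-4}$-directions are acted on by translations, so $N\cong\R^{n-4}$, the quotient $\tilde Y/N$ is a metric cone $C(\tilde Z')$ with $\dim\tilde Z'=3$, and $\bar G:=G/N$ acts on it with $\tilde Y/G=(\tilde Y/N)/\bar G$. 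This places $(\tilde Y/N,\bar G)$ in the range of the $4$-dimensional structure theory: the $3$-dimensional cross-section $\tilde Z'$ and the action of $\bar G$ are constrained (as in the proof of Theorem \ref{thm-finite generation}), forcing $\bar G$, and hence $G$ as an extension of $\bar G$ by $\R^{n-4}$, to be virtually abelian of bounded rank.

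Finally I would derive the contradiction from the escape-rate bookkeeping. The point of choosing the $\gamma_i$ greedily is that, for a suitable rescaling sequence, the limit group $G$ must fail to be virtually finitely generated in a quantitative way that is incompatible with the structure just established: the displacements $\ell_i$ and the minimality of the $\gamma_i$ force $g$ to lie outside the subgroup generated by the limits of $\gamma_1,\dots,\gamma_{i-1}$, and running this down the rescaling scales produces, inside a virtually abelian Lie group of bounded rank, an infinite strictly increasing chain of subgroups with displacement gaps bounded below — impossible. I expect the main obstacle to be precisely this last step, namely arranging the rescaling so that the non-finite-generation of $\Gamma$ is faithfully transmitted to the limit group, together with verifying that the splitting of $\tilde Y/G$ genuinely descends from a translational normal subgroup of $G$ rather than from collapsing; this is where Pan's escape-rate estimates, sharpened using the Euclidean volume growth of $\tilde M$ within the essentially $4$-dimensional reduction, must be used with care.
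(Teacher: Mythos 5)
There is a genuine gap, on two counts. First, your mechanism for exploiting the splitting hypothesis is backwards. You posit a translational normal subgroup $N\cong\R^{n-4}\trianglelefteq G$ whose quotient produces the $\R^{n-4}$-factor of the asymptotic cone of $M$. But quotienting a cone by a group of translations \emph{collapses} the corresponding Euclidean directions; it cannot create a Euclidean factor downstairs. If $G$ contained such an $N$, then $\tilde Y/G$ would \emph{not} split the $\R^{n-4}$-factor you assumed. The correct mechanism is the opposite one used in the paper (see the diagrams in Lemma \ref{splitting-lem} and Section \ref{section6}): the lines of the $\R^{n-4}$-factor of the cone of $M$ lift through the limit submetry, so the cone $\tilde Y$ of $\tilde M$ itself splits as $\R^{n-4}\times C(\tilde Z')$ and the limit group $G$ acts trivially on the $\R^{n-4}$-factor; the dimension reduction happens in the cross-section, not by dividing out a subgroup of $G$. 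Moreover, invoking ``the structure theory behind Theorem \ref{thm-finite generation}'' for the reduced $4$-dimensional piece is circular here, since the paper proves Theorem \ref{thm-finite generation'} by exactly the same argument as Theorem \ref{thm-finite generation}; the substance of that argument (which you do not reproduce) is the Bru\`e--Pigati--Semola-based simply-connectedness result (Theorem \ref{SimplyConnectnessOf(n-3)symmetry}), used to show $\Gamma$ (or its non-finitely-generated abelian subgroup, via Wilking's reduction) is torsion-free, followed by the isotropy-stability lemma (Lemma \ref{StabilityOfIsotropy}) and the classification and exclusion of limit actions (Lemma \ref{T2T3} and its sequels) to prove that every orbit $G(x^*)$ is connected, in fact isometric to $\R^l$ (Theorem \ref{OrbitisConnected}).

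Second, your concluding contradiction is not an argument. Knowing only that the limit group $G$ is a virtually abelian Lie group of bounded rank does not contradict non-finite generation of $\Gamma$: such groups do contain infinite strictly increasing chains of (non-closed) subgroups — e.g.\ finitely generated dense subgroups of $\R$ of growing rank — so ``impossible'' fails as stated, and no escape-rate estimate is actually set up. The way non-finite generation is transmitted to the limit is Pan's observation (\cite[Lemma 2.5]{Pan20}): rescaling by the lengths of a Gromov short basis yields an equivariant asymptotic cone in which the orbit $G(x^*)$ is \emph{not connected} (it has a definite gap), and the contradiction comes from proving the orbit is always connected. That orbit-connectedness is precisely where the hard work of the paper lies (torsion-freeness via the level-set topology of \cite{BPS24}, the critical-rescaling stability of isotropy groups, and the case analysis of $\Isom(\R^2)$-actions), none of which is supplied or replaced in your proposal.
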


The second main result of this paper concerns the virtual abelianness of the fundamental groups.
In general, the fundamental group of an open manifold with nonnegative Ricci curvature is known to be virtually nilpotent (\cite{Mi68,Gr81,KW11}).
On the other hand, according to the work of Wei (\cite{Wei88}) and Wilking (\cite{Wil00}), it is known that any finitely generated virtually nilpotent group can be realized as the fundamental group of some open manifold with positive Ricci curvature. However, this is not the case for nonnegative sectional curvature, where the fundamental groups are always virtually abelian. This leads to an interesting question: what sufficient conditions can distinguish open manifolds with virtually abelian fundamental groups from the general open manifolds with nonnegative Ricci curvature? This topic has been deeply explored by Pan, with references including \cite{Pan21, Pan22, Pan22-II, NPZ24}, among others. Our second result asserts that $\pi_1(M)$ in Theorem \ref{thm-finite generation} is virtually abelian.

\begin{maintheorem}\label{thm-Almost Abelian}
	There exists a constant $C>0$ to the following effect. Let $M$ be an open 4-manifold with nonnegative Ricci curvature. Suppose the Riemannian universal cover $\tilde{M}$ has Euclidean volume growth.
	\begin{enumerate}
		\item If $\pi_1(M)$ is infinite, then $\pi_1(M)$ is isomorphic to a crystallographic group of rank $k$ with $1\leq k\leq3$. In particular $\pi_{1}(M)$ contains a free abelian normal subgroup $\mathbb{Z}^{k}$ of index $\leq C$.
		
		\item If $\pi_1(M)$ is finite, then $\pi_1(M)$ is isomorphic to a quotient of the fundamental group of a spherical $3$-manifold.
		In particular, $\pi_{1}(M)$ contains a normal cyclic subgroup of index $\le C$.
	\end{enumerate}

\end{maintheorem}

In \cite{Pan22-II}, Pan proved that for an open $n$-manifold $M$ with nonnegative Ricci curvature, if the universal cover $\tilde M$ has Euclidean volume growth and $\pi_1(M)$ has a non-maximal escape rate, then $\pi_1(M)$ contains an abelian subgroup whose index is bounded by a finite number that depends on the volume growth rate of $\tilde M$. In contrast, in Theorem \ref{thm-Almost Abelian}, the index of the abelian subgroup is bounded by a universal constant independent of the volume growth rate of $\tilde M$.

A well-known result, independently established by Li \cite{L86} and Anderson \cite{An90-II}, states that if an open $n$-manifold $M$ with nonnegative Ricci curvature has Euclidean volume growth, then its fundamental group is finite. Consequently, the condition that the universal cover $\tilde{M}$ has Euclidean volume growth, combined with condition (2) of Theorem \ref{thm-Almost Abelian}, is equivalent to the manifold $M$ itself having Euclidean volume growth. Theorem \ref{thm-Almost Abelian} (2) further shows that, when $n=4$, the fundamental group $\pi_1(M)$ is not only finite but also virtually cyclic.

A key ingredient in proving our main theorems is the recent breakthrough, by Bru\`e-Pigati-Semola \cite{BPS24}, in understanding lower dimensional non-collapsed Ricci limit spaces and $\mathrm{RCD}$-spaces.
One of their theorems assert that for $M$ as in case (2) of Theorem \ref{thm-Almost Abelian}, there exists a finite subgroup $\Gamma < O(4)$ that acts freely on $S^3$, such that any asymptotic cone of $M$ is homeomorphic to $C(S^3/\Gamma)$, the metric cone over the spherical space form $S^3/\Gamma$. Our proof involves showing that $\pi_1(M)$ is a quotient of $\Gamma$. However, $\pi_1(M)$ does not necessarily coincide with $\Gamma$. For example, consider the Eguchi-Hanson space, a complete Ricci-flat 4-manifold. It is diffeomorphic to the cotangent bundle of $S^2$ and has Euclidean volume growth. Its asymptotic cone is $\R^4 / \Z_2 = C(S^3 / \Z_2)$, where the cross section has a fundamental group $\Z_2$ different from $\pi_1(M)=\{e\}$.

A generalized version of Theorem \ref{thm-Almost Abelian} for higher dimensions $n\ge 4$ is as follows.

\begin{thm}\label{thm-Almost Abelian'}
	There exists a constant $C>0$ to the following effect. Let $M$ be an open $n$-manifold with nonnegative Ricci curvature $(n\ge 4)$. If the Riemannian universal cover $\tilde{M}$ has Euclidean volume growth, and,
	\begin{enumerate}
		\item If $\pi_1(M)$ is infinite and every asymptotic cone of $M$ splits an $\R^{n-4}$-factor, then $\pi_1(M)$ is isomorphic to a crystallographic group of rank $k$ with $1\leq k\leq3$. In particular $\pi_{1}(M)$ contains a free abelian normal subgroup $\mathbb{Z}^{k}$ of index $\leq C$.
		
		\item If $\pi_1(M)$ is finite and one asymptotic cone of $M$ splits an $\R^{n-4}$-factor, then $\pi_1(M)$ is isomorphic to a quotient of the fundamental group of a spherical $3$-manifold.
		In particular, $\pi_{1}(M)$ contains a normal cyclic subgroup of index $\le C$.
	\end{enumerate}
	
\end{thm}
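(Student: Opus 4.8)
\emph{Strategy.} The plan is to reduce Theorem~\ref{thm-Almost Abelian'} to the four–dimensional results already established, namely Theorems~\ref{thm-finite generation} and~\ref{thm-Almost Abelian}, by using the $\R^{n-4}$–splitting hypothesis to peel off the extra $n-4$ dimensions. Since $\tilde M$ has Euclidean volume growth, every asymptotic cone $\tilde C$ of $\tilde M$ is an $n$–dimensional non-collapsed metric cone $C(\tilde Z)$ with $\diam(\tilde Z)\le\pi$. I would first show that $\tilde C$ itself splits an $\R^{n-4}$–factor: choosing an asymptotic cone $C_M=\R^{n-4}\times C(W)$ of $M$ realized along the same scales, the quotient map $\tilde C\to C_M$ (by the limit isometry group) is a submetry, so horizontal lifts of $n-4$ independent lines in the $\R^{n-4}$–factor of $C_M$ are lines in $\tilde C$; the splitting theorem for $\RCD(0,n)$ spaces then gives $\tilde C=\R^{n-4}\times C(V)$ with $C(V)$ a four-dimensional non-collapsed limit cone. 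By the Bru\`e--Pigati--Semola structure theory \cite{BPS24}, $C(V)$ is isometric to $C(S^3/\Gamma_1)$ for a finite $\Gamma_1<\Ort(4)$ acting freely on $S^3$ (allowing $\Gamma_1$ trivial, i.e. $\tilde C$ flat). Likewise, when $\pi_1(M)$ is finite --- equivalently, by Li \cite{L86} and Anderson \cite{An90-II}, when $M$ itself has Euclidean volume growth --- \cite{BPS24} gives $C(W)=C(S^3/\Gamma_2)$ with $\Gamma_2$ finite acting freely on $S^3$, and this $\Gamma_2$ is independent of the chosen cone.

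\emph{The limit group.} Next I would run the equivariant asymptotic cone construction: after passing to a subsequence, $(\tilde M,\tilde p,\pi_1(M))\pmGH(\tilde C,\tilde o,G)$ with $G$ a Lie group acting isometrically on $\tilde C$ and $\tilde C/G\cong C_M$. Since $\R^{n-4}$ is the maximal Euclidean de Rham factor of both $\tilde C$ and $C_M$ (in the non-flat case), the action of $G$ must respect it, and in order for the quotient to retain the full $\R^{n-4}$ the group $G$ must act trivially on that factor; hence $G$ embeds into $\Isom(C(S^3/\Gamma_1))$, which is a compact group (every isometry preserves the vertex fiber). In the flat case $\tilde C=\R^n$ and the $\R^{n-4}$–splitting of $C_M$ confines $G$ (up to the inert $\R^{n-4}$) to $\Isom(\R^4)$. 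Either way the situation is now literally the four-dimensional one analysed for Theorem~\ref{thm-Almost Abelian}: $G$ and its action on $\R^4$ or on $C(S^3/\Gamma_1)$ coincide with what appears there, $\pi_1(M)$ is finitely generated by Theorem~\ref{thm-finite generation'}, and the natural homomorphism $\pi_1(M)\to G$ has finite kernel (by an escape-rate estimate using the Euclidean volume growth of $\tilde M$ together with \cite{BPS24}, in the spirit of \cite{Pan22-II}); thus $\pi_1(M)$ is, up to finite groups, a finitely generated subgroup of $G$.

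\emph{The two cases.} If $\pi_1(M)$ is finite, I would exploit the four-dimensional topological regularity of \cite{BPS24}: for $r$ large the annulus $A_{r,2r}(\tilde p)\subset\tilde M$ is homeomorphic to the corresponding annulus in $\tilde C=\R^{n-4}\times C(S^3/\Gamma_1)$, which deformation retracts onto $S^3/\Gamma_1$; tracking the $\pi_1(M)$–action and using that $M$ has one end together with covering space theory, $\pi_1(M)$ is seen to be a quotient of $\pi_1$ of the end of $M$, namely of $\Gamma_2=\pi_1(S^3/\Gamma_2)$, the fundamental group of a spherical $3$–manifold; the universal index bound then follows from the classification of spherical space form groups. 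If $\pi_1(M)$ is infinite, then $M$ collapses at infinity, so $C_M=\R^{n-4}\times C(W)$ with $\dim C(W)\le 3$; $\pi_1(M)$ is virtually nilpotent \cite{Mi68,Gr81,KW11} and, up to finite index, a finitely generated subgroup of a compact Lie group (or a Bieberbach group in the flat case), hence virtually abelian (a finitely generated virtually nilpotent subgroup of a compact Lie group is virtually abelian, its closure being nilpotent hence a torus). Identifying $\pi_1(M)$ with a crystallographic group and bounding its rank by the four-dimensional picture exactly as for Theorem~\ref{thm-Almost Abelian} --- a rank-$4$ lattice would produce a cocompact torus action forcing a closed four-dimensional quotient, and in the non-flat case the rank is further bounded by $\mathrm{rank}\,\Isom(S^3/\Gamma_1)$ --- yields rank $k\le 3$ with a universal index bound.

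\emph{Main obstacle.} The hard part is the transfer of structural information from the limit group $G$ and the topology of $\tilde C$ back to $\pi_1(M)$ itself: topological stability of large annuli and finiteness of $\ker(\pi_1(M)\to G)$ both fail in general dimension, and it is precisely the $\R^{n-4}$–splitting together with the non-collapsed four-dimensional structure theory of \cite{BPS24} (topological manifold regularity, codimension-$4$ singular set, classification of tangent cones) that forces the extra $n-4$ directions to be Euclidean and inert and supplies the required stability. Making rigorous the implication ``$C_M$ splits $\R^{n-4}$ $\Rightarrow$ $\tilde C$ splits $\R^{n-4}$ with $G$ acting trivially there'', in particular controlling the passage through the singular strata of the quotient map and handling the degenerate configurations of $C(W)$ (including the flat case and the bound $\dim C(W)\le 3$), is the technical heart of the argument; once it is in place the conclusions follow by quoting the four-dimensional proof.
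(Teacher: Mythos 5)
Your reduction does not go through, and the central assertions on which it rests are either false or unproven. In case (1), once $\pi_1(M)$ is infinite, the limit group $G$ in $(\tilde M,\tilde p,\pi_1(M))\pGH(\tilde C,\tilde o,G)$ has an unbounded orbit $G(\tilde o)$ (Lemma \ref{Lem17:28}(1)), so $G$ cannot embed into the compact group $\Isom(C(S^3/\Gamma_1))$; moreover the cone of $\tilde M$ then typically splits strictly more than $\R^{n-4}$ (the hypothesis controls the cones of $M$, not of $\tilde M$), so its $4$-dimensional cone factor need not be of the form $C(S^3/\Gamma_1)$ at all. More seriously, the ``natural homomorphism $\pi_1(M)\to G$ with finite kernel'' does not exist in general --- elements of $\Gamma$ only converge to elements of $G$ after rescaling, and no escape-rate hypothesis is available here (the paper explicitly contrasts itself with \cite{Pan22-II} on this point). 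The paper never constructs such a map in the infinite case: it proves torsion-freeness of the nilpotent part by applying Theorem \ref{SimplyConnectnessOf(n-3)symmetry} to $\tilde M/T$, proves virtual abelianness by the dichotomy in Claim \ref{claim3.3} together with Lemma \ref{StableSplittingLemma}, upgrades to $\Z^k$ via Lemma \ref{lem3.5}, and finally kills the finite kernel $E$ in Wilking's exact sequence by applying Theorem \ref{SimplyConnectnessOf(n-3)symmetry} to $\tilde M/E$. Note also that since the $\R^{n-4}$-splitting holds only for asymptotic cones and not for $M$ itself, there is no honest reduction to the four-dimensional statement; and invoking ``the four-dimensional proof'' of Theorem \ref{thm-Almost Abelian} is circular, because in the paper Theorem \ref{thm-Almost Abelian} is a corollary of the present theorem.

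In case (2) the two steps you would need are exactly the ones you leave unjustified. First, the identification of $\pi_1(M)$ with the limit group: this is not automatic and is supplied in the paper by Lemma \ref{StabilityOfFiniteGroupsAction}, which uses the uniform bound on $\#\Gamma_i$ coming from volume comparison plus the gap theorem of \cite{PR18}; your proposal simply asserts it. Second, your claim that large annuli in $\tilde M$ are homeomorphic to annuli in the cone is a topological stability statement in dimension $n\ge 4$ that \cite{BPS24} does not provide; what it provides are good codimension-two level sets, and the paper's route is different: it shows the $G$-action on the cross-section $X$ is \emph{free} by blowing up at a hypothetical fixed point and invoking Lemma \ref{KeyLemma} (the level-set connectedness argument), whence $X\to Y=X/G$ is a covering of spaces homeomorphic to spherical space forms and $\Gamma\cong\Gamma_Y/\Gamma_X$, giving both the quotient statement and, via \cite{Wolf11}, the universal index bound. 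Without the freeness argument and the finite-action stability lemma, your ``quotient of the fundamental group of the end'' conclusion is not established.
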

We note that in Theorem \ref{thm-Almost Abelian'}, case (1) assumes that every asymptotic cone of $M$ splits an $\R^{n-4}$-factor, while case (2) only requires a single asymptotic cone splits an $\R^{n-4}$-factor. Neither of these assumptions ensures that $M$ itself splits an $\R^{n-4}$-factor.

The following corollary arises as a result of proving Theorem \ref{thm-finite generation} and Theorem \ref{thm-Almost Abelian}. It characterizes specific geometric properties that serve as sufficient conditions for the finite generation or virtually abelian nature of fundamental groups.
 
\begin{cor}\label{cor-Polarness}
	Let $M$ be an open 4-manifold with nonnegative Ricci curvature. If the Riemannian universal cover $\tilde{M}$ has Euclidean volume growth, then
	\begin{enumerate}
		\item The reference point of every asymptotic cone of $M$ is a pole.
		\item $\pi_1(M)$ has a vanishing escape rate.
	\end{enumerate}
\end{cor}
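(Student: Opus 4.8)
The plan is to read off both statements from Theorems~\ref{thm-finite generation} and~\ref{thm-Almost Abelian}, combined with Cheeger--Colding's structure theory and the known dictionary between the escape rate and the geometry of asymptotic cones. Theorem~\ref{thm-finite generation} makes $\pi_1(M)$ finitely generated, so its escape rate $s(M,p)$ is defined, and Theorem~\ref{thm-Almost Abelian} makes $\pi_1(M)$ virtually abelian, with two cases. I will use two inputs: (a) by Li~\cite{L86} and Anderson~\cite{An90-II}, under the standing assumption that $\tilde M$ has Euclidean volume growth, $\pi_1(M)$ is finite exactly when $M$ itself has Euclidean volume growth, in which case every asymptotic cone of $M$ is a metric cone with the reference point as its vertex; and (b) the fact (see, e.g., \cite{PR18, Pan21}) that for $M$ with finitely generated fundamental group one has $s(M,p)=0$ if and only if the reference point of every asymptotic cone of $M$ is a pole. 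Since every point of a metric cone lies on a geodesic ray issuing from the vertex, (b) reduces the whole corollary to showing that each asymptotic cone of $M$ is a metric cone whose vertex is the reference point.

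If $\pi_1(M)$ is finite this is immediate from (a). Assume then that $\pi_1(M)=\Gamma$ is infinite and let $\mathbb{Z}^k\trianglelefteq\Gamma$, $1\le k\le 3$, be the free abelian normal subgroup of finite index supplied by Theorem~\ref{thm-Almost Abelian}(1); here $M$ is collapsed at infinity, so a priori its asymptotic cones need not be cones. I would argue through the equivariant asymptotic cone: along a scaling sequence $r_i\to\infty$ one has $(\tfrac{1}{r_i}\tilde M,\tilde p,\Gamma)\to(\tilde M_\infty,\tilde o,G)$ with $M_\infty=\tilde M_\infty/G$, and, since $\tilde M$ has Euclidean volume growth, $\tilde M_\infty=C(\tilde Z)$ is a metric cone with vertex $\tilde o$. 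The normal finite-index lattice $\mathbb{Z}^k$, acting properly discontinuously with orbits escaping to infinity, produces in the limit nontrivial translations of $\tilde M_\infty$, hence lines through $\tilde o$; by the splitting theorem for non-collapsed limit spaces this forces an isometric decomposition $\tilde M_\infty=\mathbb{R}^m\times C(W)$, where $C(W)$ has no Euclidean factor and therefore a unique vertex $o_W$ fixed by every isometry of $C(W)$. The limit group $G$ respects this decomposition; because the translational part of $G$ is exactly the one coming from $\mathbb{Z}^k$ and $\Gamma/\mathbb{Z}^k$ is finite, modding out the Euclidean directions on which $G$ acts by translations presents $M_\infty$ as the quotient of a metric cone by a compact group of isometries fixing its vertex, hence again as a metric cone whose vertex is the image of $\tilde o$. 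This proves (1), and then (2) follows from (b).

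The step I expect to be the main obstacle is precisely this equivariant cone analysis in the infinite case: one must control the limit group $G$ closely enough to know that $\mathbb{Z}^k$ genuinely splits off a Euclidean factor of $\tilde M_\infty$, and that the residual isometries --- the point-group part and whatever compact piece the lattice contributes in the limit --- fix the vertex of the complementary cone $C(W)$ rather than shifting the reference point off it, so that the a priori lower-dimensional space $M_\infty$ really is a metric cone based at the reference point. The crystallographic conclusion of Theorem~\ref{thm-Almost Abelian}(1) --- bounded rank, a normal lattice, a finite point group --- is what should make this bookkeeping go through. As a self-contained substitute for input (b) in establishing (2), one can instead check directly that for $\gamma\in\Gamma$ lying deep in the cocompact $\mathbb{Z}^k$-direction a minimal geodesic in $\tilde M$ from $\tilde p$ to $\gamma\tilde p$ projects to a loop in $M$ that stays within a bounded neighborhood of $p$, whence the maximal displacement of such minimal loops is $o(\mathrm{len}(\gamma))$ and $s(M,p)=0$.
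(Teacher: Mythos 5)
There are genuine gaps, and they sit exactly where the paper does its real work. First, your input (b) --- that for finitely generated $\pi_1(M)$ the escape rate vanishes if and only if the reference point of every asymptotic cone of $M$ is a pole --- is not a result contained in \cite{PR18} or \cite{Pan21}, and the direction you need (polar cones of $M$ $\Rightarrow$ vanishing escape rate) is not known; the escape rate is controlled in the literature via the \emph{equivariant} asymptotic cones of $(\tilde M,\pi_1(M))$, not via the cones of $M$ alone, and this is how the paper argues: conclusion (2) is obtained from \cite[Proposition 4.2]{Pan22}, whose hypothesis is the connectedness of the orbits $G(x^*)$ in every $(X,x^*,G)\in\Omega(\tilde M,\Gamma)$. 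Your fallback argument for (2) (that minimal geodesics from $\tilde p$ to $\gamma\tilde p$ for $\gamma$ deep in $\mathbb{Z}^k$ project to loops staying in a \emph{bounded} neighborhood of $p$) is asserted, not proved, and is false in general even in situations where the escape rate does vanish; proving even sublinear escape is precisely the content that the orbit-connectedness theorem plus Pan's proposition supply.

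Second, for (1) in the infinite case you aim at a stronger statement than is true: the asymptotic cone of $M$ need not be a metric cone with vertex at the reference point. With $X=\R^l\times C(W)$ and $G$ containing screw motions (translation on the $\R$-factor coupled with a rotation of $C(W)$ --- such elements are not excluded, cf.\ Lemma \ref{T2T3} and the ``property (P)'' discussion), the quotient $Y=X/G$ is generally not a cone, although its reference point is still a pole; your step ``modding out the Euclidean directions on which $G$ acts by translations presents $M_\infty$ as the quotient of a metric cone by a compact group fixing its vertex'' wrongly assumes the translational and cross-sectional parts of $G$ decouple. More importantly, the crux of the corollary is to rule out \emph{disconnected} orbits $G(x^*)$ (e.g.\ of type $\R\times a\Z$ or with a discrete reflection/rotation part, cases (\ref{itm:b1})--(\ref{itm:b2})), for which the reference point of $Y$ would fail to be a pole. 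That the orbit is always isometric to some $\R^l$ is Theorem \ref{OrbitisConnected}, whose proof consumes Sections \ref{section3}--\ref{section5} (the simply-connectedness lemma via \cite{BPS24}, the isotropy-stability lemma, and a critical rescaling argument); your proposal neither invokes this theorem nor offers a substitute for it, so the key step of your cone analysis (``the translational part of $G$ is exactly the one coming from $\mathbb{Z}^k$'') is unsupported. The finite-$\pi_1$ case you handle as the paper does, but the infinite case and conclusion (2) both rest on the missing orbit-connectedness input.
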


In a metric space $X$, a point $x \in X$ is called a pole of $X$ if, for every $y \in X \setminus \{x\}$, there exists a ray starting from $x$ that passes through $y$. According to Sormani's pole group theorem \cite{Sor99}, condition (1) of Corollary \ref{cor-Polarness} is a sufficient condition for the finite generation of $\pi_1(M)$. The concept of escape rate was introduced in \cite{Pan21}, where it was shown that condition (2) of Corollary \ref{cor-Polarness} also provides a sufficient condition for the virtual abelianness of $\pi_1(M)$.

We give a brief explanation of the approach to Theorem \ref{thm-finite generation}.

The key ingredient in our proofs is a topological stability result for good level sets of codimension $2$ almost splitting maps, which is due to \cite{BPS24}. More specifically, if we have a convergent sequence, in the pointed Gromov-Hausdorff sense, of $4$-manifolds with nonnegative Ricci curvature, $(M_i,p_i)\pGH(\R\times C(Z),p_\infty)$, then by Cheeger-Colding's theory, for any large $i$, there exist an $\epsilon_i$-splitting function $v_i:B_{100}(p_i)\to\R$ ($\epsilon_i\to0$) and a Green-type distance function $b_i:B_{100}(p_i)\to\R$ which uniformly converge to the projection $P_1:\R\times C(Z)\to\R$ and the distance function of $p_\infty$ respectively.
Bru\`e-Pigati-Semola \cite{BPS24} proved that there exist many $(t_i,s_i)\in[-1,1]\times[8,9]$, such that the level sets $\Sigma_i:=(v_i,\sqrt{b_i^2-v_i^2})^{-1}(t_i,s_i)$ are all homeomorphic to $Z$. We can choose $t_i$ to be the regular value of $u_i$. Hence $\Sigma_i$ is a boundary of a $3$-manifold. This implies that each $Z_i$ is homeomorphic to $S^2$. 

Building upon the aforementioned result, we show that if $\pi_1(M)$ is nilpotent, then it must be torsion-free. This significantly simplifies the structure of the equivariant asymptotic cones of $(\tilde{M}, \pi_1(M))$. Under our assumptions, we then employ the critical scaling argument, a technique developed by Pan, to prove the existence of a group $K$ such that for any $(X, x^*, G) \in \Omega(\tilde{M}, \pi_1(M))$, the isotropy group of $G$ at $x^*$ is isomorphic to $K$. Here, $\Omega(\tilde{M}, \pi_1(M))$ denotes the union of all equivariant asymptotic cones of $(\tilde{M}, \pi_1(M))$. The remaining proofs involve analyzing all possibilities of $(X,x^*,G)\in \Omega(\tilde{M}, \pi_1(M))$ to show that the orbit $G(x^*)$ is connected. This is sufficient to prove $\pi_1(M)$ is finitely generated.

We organize this paper as follows. Section \ref{section2} presents several simple results that will be frequently used throughout this paper. In Section \ref{section3}, we establish the first key lemma: if an open $n$-manifold $M$ with nonnegative Ricci curvature has Euclidean volume growth and one of its asymptotic cones splits an $\R^{n-3}$-factor, then $M$ is simply connected. Section \ref{section4} is devoted to proving a stability lemma regarding the isotropy groups at the reference point of limit groups. This is the second key lemma in the proof of Theorem \ref{thm-finite generation}. In Section \ref{section5}, we prove Theorem \ref{thm-finite generation}, which asserts the finite generation of $\pi_1(M)$, by examining all possible equivariant asymptotic cones of $\tilde M$. In Section \ref{section6}, we prove Theorem \ref{thm-Almost Abelian'} and Corollary \ref{cor-Polarness}, thereby proving Theorem \ref{thm-Almost Abelian} as well. Section \ref{section7} serves as an appendix, where we have compiled the proofs of several lemmas referenced in the preceding text.

\vspace*{12pt}



\section{Some Preliminary Lemmas}\label{section2}

In this section, we collect several useful lemmas that will be frequently used in the proofs of our main theorems.

Unless otherwise stated, throughout this paper, $(M, p)$ represents a pointed open manifold with nonnegative Ricci curvature, and $(\tilde M, \tilde p) \to (M, p)$ denotes its Riemannian universal cover. The symbol $d$ always refers to the distance function on the relevant spaces.
$\Isom (M)$ denotes the isometry group of $M$.

Let $\Gamma$ be a closed subgroup of $\Isom (M)$. For any $s_i\to\infty$, there exists a convergent subsequence $(s_i^{-1}M,p,\Gamma)\pGH(X,x,G)$ in the equivariant Gromov-Hausdorff sense, where $G$ is a closed subgroup of $\Isom(X)$. Such $(X,x,G)$ is called an equivariant asymptotic cone of $(M,p,\Gamma)$. We use $\Omega(M,\Gamma)$ to denote the union of all equivariant asymptotic cone of $(M,\Gamma)$. For the definition of equivariant Gromov-Hausdorff convergence, we refer to \cite{FY92}. 

The following fact provides sufficient conditions for ensuring the splitting of asymptotic cones.

\begin{lem}\label{Lem17:28}
Let $(M, p)$ be a pointed open manifold with nonnegative Ricci curvature,

\begin{itemize}
	\item [(1)] Let $\Gamma$ be a closed subgroup of $\Isom (M)$. If $\Gamma$ is finitely generated and the orbit $\Gamma(p)$ is unbounded, then for any $(Y,y^*,G)\in\Omega(M,\Gamma)$, the orbit $G(y^*)$ is unbounded and not discrete.
	
	\item [(2)] If there exists a non-trivial torsion-free element $\gamma\in\Isom (M)$ such that $\spa{\gamma}$ is a closed subgroup of $\Isom (M)$, then for any $(Y,y^*,H)\in\Omega(M,\langle\gamma\rangle)$, $H$ contains a closed subgroup $L$, which is isomorphic to $\R^l$ for $l\ge 1$, such that the orbit $L(y^*)$ is homeomorphic to $\R^l$.
\end{itemize}

 In particular, in either case as above, if $M$ has Euclidean volume growth, then $Y$ splits an $\R$-factor.

\end{lem}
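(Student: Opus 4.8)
The plan is to treat the two cases separately, deduce the input needed for (2) from (1), and obtain the splitting as a common consequence. For (1), I first show $G(y^*)$ is unbounded, using finite generation. Fix a finite symmetric generating set $S$ of $\Gamma$ and set $C_0=\max_{s\in S}d(sp,p)$, which is positive since $\Gamma(p)$ is unbounded. Given $(s_i^{-1}M,p,\Gamma)\pGH(Y,y^*,G)$ and any $R>0$, choose $\gamma\in\Gamma$ with $d(\gamma p,p)>Rs_i$; walking along a word representing $\gamma$, successive partial products move $p$ by at most $C_0$ at each step, so some partial product $\gamma_i$ satisfies $Rs_i\le d(\gamma_ip,p)<Rs_i+C_0$. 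Passing to a subsequence, $\gamma_i\to g\in G$ (a distance-bounded sequence in $\Gamma$ subconverges in $G$), and $d(gy^*,y^*)=\lim s_i^{-1}d(\gamma_ip,p)=R$; since $R$ was arbitrary, $G(y^*)$ is unbounded. For non-discreteness I use that $\Gamma(p)\subset M$ is $C_0$-chain connected — any two orbit points are joined by a chain of orbit points with consecutive distances $\le C_0$, obtained by inserting the partial products of a connecting word — so in $s_i^{-1}M$ the orbit is $(C_0/s_i)$-chain connected with $C_0/s_i\to0$; since the orbits converge under equivariant Gromov--Hausdorff convergence, $G(y^*)$ is connected (a disconnection of the limit orbit into two closed pieces at positive distance would trap a chain vertex strictly between them). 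An unbounded connected set is not discrete.

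For (2): $\langle\gamma\rangle$ is a closed, countable subgroup of the Lie group $\Isom(M)$, hence discrete and isomorphic to $\Z$; properness of the isometric action forbids a bounded orbit (that would make $\{\gamma^n\}$ precompact, hence finite, contradicting that $\gamma$ has infinite order). So (1) applies with $\Gamma=\langle\gamma\rangle$, yielding $H(y^*)$ connected and unbounded. Now $H$ is abelian, being an equivariant limit of the abelian group $\langle\gamma\rangle$, and $H\le\Isom(Y)$ is a Lie group acting properly on the proper Ricci limit $Y$; hence $H_{y^*}$ is a closed normal subgroup and $H(y^*)\cong H/H_{y^*}$ is a connected abelian Lie group. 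Connectedness lets us replace $H$ by its identity component $H^0\cong\R^a\times T^b$, so $H/H_{y^*}\cong\R^l\times T^{b'}$, and unboundedness forces $l\ge1$. Splitting the abelian Lie extension whose quotient is this $\R^l$ produces a \emph{closed} subgroup $L\cong\R^l\le H$ with $L\cap H_{y^*}=\{e\}$; since $L$ then acts freely and properly, the orbit map embeds $L(y^*)$ in $Y$ as a properly embedded copy of $\R^l$.

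Finally, suppose $M$ has Euclidean volume growth. By Cheeger--Colding each asymptotic cone $Y$ is a metric cone $C(W)$ with the base point $y^*$ as cone vertex. In case (1) the unbounded orbit $G(y^*)$ is nontrivial, so some $g\in G$ satisfies $gy^*\ne y^*$; in case (2) the orbit $L(y^*)\cong\R^l$ with $l\ge1$ is likewise nontrivial, so again some isometry moves $y^*$. Pushing the cone structure forward by such an isometry exhibits $Y$ as a metric cone with two distinct vertices, and a metric cone with two distinct vertices splits off an $\R$-factor (the difference of the two squared distance functions is affine along geodesics, equivalently it has nonzero parallel gradient, so the splitting theorem applies).

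I expect the main obstacle to lie not in a single estimate but in the careful use of equivariant Gromov--Hausdorff convergence — that orbits converge to orbits, that distance-bounded sequences in $\Gamma$ subconverge in $G$, and the correct application of the principle that Gromov--Hausdorff limits of uniformly $\epsilon_i$-chain-connected closed sets with $\epsilon_i\to0$ are connected — together with, for (2), the Lie-theoretic bookkeeping: verifying $H_{y^*}$ is normal, that connectedness of $H(y^*)$ licenses the passage to $H^0$, and that the abelian extension splits with $L$ \emph{closed}, so that properness of the action upgrades the continuous bijection $L\to L(y^*)$ to a homeomorphism.
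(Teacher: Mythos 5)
Your part (1) unboundedness argument (finite generating set, partial products with steps $\le C_0$, stopping at the first crossing of radius $Rs_i$, subconvergence of bounded elements) is exactly the paper's argument, and your reduction of (2) to (1) via discreteness of $\langle\gamma\rangle$ and properness, as well as the final "metric cone with two vertices splits a line" step, are fine. The genuine gap is the claim that $G(y^*)$ is \emph{connected}, and the way part (2) leans on it. The principle you invoke -- that limits of $\epsilon_i$-chain-connected closed sets with $\epsilon_i\to 0$ are connected -- is only valid for uniformly bounded (compact) sets; for an unbounded closed limit set, a disconnection need not have its two closed pieces at positive distance, so "a chain vertex gets trapped between them" proves nothing. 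Worse, the chains of partial products joining $p$ to a distant orbit point need not stay in any fixed ball of $s_i^{-1}M$ (this escape phenomenon is precisely the delicate issue in this subject), so you cannot even restrict the argument to $G(y^*)\cap \bar B_R(y^*)$ and apply the compact-set version. Note also that connectedness is strictly more than the lemma asserts, and the paper only obtains orbit connectedness much later (Theorem 5.1) under far stronger hypotheses; in Lemma 2.1(1) it proves only "unbounded and not discrete", by observing that the partial-product argument run at \emph{every} scale $R>0$ puts an orbit point on each sphere $\partial B_R(y^*)$, so $y^*$ is an accumulation point. Your own construction already yields this, so (1) is easily repaired -- but as written the non-discreteness step rests on an unjustified connectedness claim.

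The reliance on connectedness becomes a real problem in (2): you use it to replace $H$ by $H^0$ and conclude $H(y^*)\cong H^0/H^0_{y^*}\cong\R^l\times T^{b'}$ with $l\ge 1$. Without connectedness, unboundedness of $H(y^*)$ does not transfer to $H^0(y^*)$, and that transfer is exactly the content the paper supplies separately: it first gets $\dim H^0\ge 1$ from non-discreteness, and then proves $H^0(y^*)$ is unbounded by producing, for every $\epsilon>0$ and $r>1$, an $\epsilon$-chain of orbit points from $y^*$ to distance $\ge r$ that \emph{stays inside a fixed ball} -- this is arranged by taking the powers $\gamma^s(p)$ up to the \emph{first} exponent whose rescaled displacement exceeds $r+\epsilon$, so all earlier chain points are automatically within $r+2\epsilon$ of the basepoint -- and then using compactness of $H(y^*)\cap\bar B_{r+3\epsilon}(y^*)$ (components coincide with chain-components in a compact set, and the component of $y^*$ in the orbit is $H^0(y^*)$). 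Your Lie-theoretic bookkeeping after that point (compact isotropy inside the torus factor, choosing $L=\R^l\times\{0\}$ closed with trivial isotropy, properness upgrading the orbit map to a homeomorphism) is correct, but the proof needs this bounded-chain step, or some substitute, to license passing from $H$ to its identity component.
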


Lemma \ref{Lem17:28} is well-known among experts. We have included its proof in the appendix for the reader's convenience.

An observation by Pan is that if the fundamental group of $M$ is not finitely generated, then there exists an equivariant asymptotic cone, $(Y,y,G)$, of $(\tilde M,\tilde p,\Gamma)$, where $\Gamma=\pi_1(M)$ is the deck transformation, such that $G(y)$ is not connected (\cite[Lemma 2.5]{Pan20}). By combining this with \cite[Lemma 3.1]{H24}, we derive the following lemma which establishes a criterion guaranteeing that whenever all asymptotic cones of the manifold split off at least $k$-Euclidean factors, there necessarily exists some asymptotic cone admitting at least $(k+1)$ splittings.

\begin{lem}\label{splitting-lem}
	Let $(M,p)$ be an open manifold with nonnegative Ricci curvature. Suppose that every asymptotic cone of $M$ splits an $\R^k$-factor. If $\pi_{1}(M)$ is nilpotent and not finitely generated, and the universal cover $\tilde{M}$ has Euclidean volume growth.
	Then there exists an asymptotic cone of $M$ which splits an $\R^{k+1}$-factor.
	Consequently, any normal covering $\bar{M}$ of $M$ has an asymptotic cone splitting an $\R^{k+1}$-factor.
\end{lem}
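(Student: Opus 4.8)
The plan is to combine Pan's disconnectedness criterion with the structural splitting lemma \cite[Lemma 3.1]{H24}. First, since $\pi_1(M)$ is nilpotent but not finitely generated, I invoke \cite[Lemma 2.5]{Pan20} applied to the deck transformation action $(\tilde M,\tilde p,\Gamma)$ with $\Gamma=\pi_1(M)$: there exists an equivariant asymptotic cone $(Y,y,G)\in\Omega(\tilde M,\Gamma)$ for which the orbit $G(y)$ is not connected. Because $\tilde M$ has Euclidean volume growth, every asymptotic cone of $\tilde M$ is a metric cone $C(Z)$; correspondingly $Y$ is such a cone, and $Y/G$ is (after passing to a subsequence) an asymptotic cone of $M$, hence by hypothesis splits an $\R^k$-factor. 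The disconnectedness of $G(y)$ should be promoted to a nontrivial escaping behavior in the $G$-action that, via the nilpotency of $\Gamma$ (so that $G$ is a nilpotent Lie group by the generalized Margulis lemma / Cheeger-Colding-Naber structure theory), forces an extra line in $Y$.

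The key step is: on the one hand, every asymptotic cone of $M$ splits $\R^k$, which means every element of $\Omega(\tilde M,\Gamma)$ has quotient splitting $\R^k$; on the other hand, the non-connected orbit in $(Y,y,G)$ together with nilpotency of $G$ lets us apply \cite[Lemma 3.1]{H24} to conclude that $Y$ itself (not merely $Y/G$) splits an $\R^{k+1}$-factor. Concretely, one takes the identity component $G^0$, which acts with orbits contained in the Euclidean factors already present, and observes that the component group $G/G^0$ being nontrivial and the orbit $G(y)$ being unbounded (this unboundedness coming from non-finite-generation, as in the proof of Lemma \ref{Lem17:28}) yields an additional $\R$-factor after rescaling; the cited lemma packages exactly this inference under the nilpotency assumption. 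Thus $Y/G$, being an asymptotic cone of $M$, splits an $\R^{k+1}$-factor — or rather, one extracts from this argument an asymptotic cone of $M$ with the desired extra splitting.

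For the last sentence: if $\bar M\to M$ is a normal cover with deck group $\Gamma/N$ for $N\triangleleft\pi_1(M)$, then $\tilde M$ is also the universal cover of $\bar M$, and $\pi_1(\bar M)=N$ is again nilpotent (subgroup of a nilpotent group). Asymptotic cones of $\bar M$ are quotients $Y/\bar G$ where $\bar G\le G$ corresponds to $N$; since $Y$ itself splits $\R^{k+1}$ and $\bar G$ preserves this (being a subgroup of the nilpotent $G$ whose action respects the cone/Euclidean structure), the quotient $Y/\bar G$ also splits an $\R^{k+1}$-factor. Alternatively and more simply, one notes that having an asymptotic cone that splits $\R^{k+1}$ is a property that passes to normal covers because $\tilde M$ is common to all of them and the relevant equivariant cone $(Y,y,G)$ restricts.

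The main obstacle I anticipate is the passage from "$G(y)$ disconnected" to "$Y$ splits an extra $\R$-factor": this is not automatic and is precisely where the nilpotency of $\pi_1(M)$ and the Euclidean volume growth of $\tilde M$ must be used in tandem, via the structure of nilpotent Lie group actions on metric cones and the escape-rate / rescaling machinery encoded in \cite[Lemma 3.1]{H24}. One must verify carefully that the hypotheses of that lemma are met — in particular that the disconnected orbit genuinely produces a one-parameter subgroup (or an unbounded $\Z$-factor that rescales to $\R$) transverse to the already-split $\R^k$, rather than merely reshuffling the existing factors. I would spend most of the write-up making this reduction precise and citing the exact form of \cite[Lemma 3.1]{H24} being applied.
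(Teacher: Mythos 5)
There is a genuine gap, and it sits exactly where you flagged your ``main obstacle.'' The paper's use of \cite[Lemma 3.1]{H24} does \emph{not} conclude that the upstairs cone $Y$ splits an extra Euclidean factor; it concludes that every \emph{tangent cone} $T_{y^*}Y$ of the \emph{downstairs} cross-section at the reference point splits an $\R$-factor (here the relevant diagram is $(r_i^{-1}\tilde M,\tilde p,\Gamma)\to(\R^k\times\R^s\times X,G)$ over $(r_i^{-1}M,p)\to(\R^k\times Y)$, with $r_i$ the lengths of a Gromov short basis, $X$ line-free so isometries fix its vertex, and $G$ nilpotent). Your version — ``$Y$ itself splits $\R^{k+1}$, hence $Y/G$ splits'' — is both a misreading of that lemma and logically insufficient: splitting does not descend to quotients (a translation action on an $\R$-factor produces a circle factor downstairs), and indeed the original asymptotic cone $\R^k\times Y$ of $M$ need \emph{not} split $\R^{k+1}$ at all. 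The missing mechanism, which your ``or rather, one extracts\ldots'' hedge does not supply, is the diagonal rescaling: since $T_{y^*}Y$ splits an $\R$-factor, one chooses $s_i\to\infty$ with $s_ir_i^{-1}\to 0$ so that $(s_ir_i^{-1}M,p)\GH \R^k\times T_{y^*}Y=\R^{k+1}\times Y_1$; it is this \emph{new} asymptotic cone of $M$, not the one where the disconnected orbit was found, that carries the extra factor. (Minor additional point: your appeal to unboundedness of $G(y)$ ``as in Lemma \ref{Lem17:28}'' is off, since that lemma's part (1) assumes finite generation; what Pan's short-basis construction actually delivers is the disconnectedness of the orbit, which is what \cite[Lemma 3.1]{H24} consumes.)

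The covering statement has the same flaw run in the wrong direction. You try to push the splitting \emph{down} from the cone of $\tilde M$ to $Y/\bar G$, claiming the quotient splits because $\bar G$ ``preserves'' the product structure; preserving a splitting does not make the quotient split, for the same translation/circle reason. The paper argues \emph{upward}: at the scales $s_ir_i^{-1}$ the asymptotic cone $\bar X$ of $\bar M$ admits a submetry $\sigma:\bar X\to\R^{k+1}\times Y_1$ onto the cone of $M$ just produced, lines in the base lift through a submetry, and hence $\bar X$ splits an $\R^{k+1}$-factor. So both halves of your argument need to be reorganized around the quotient's tangent cone and the rescaled limit, rather than around a splitting of the cover's cone.
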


\begin{proof}
	Fix a Gromov's short basis $\{\gamma_{1},\gamma_{2},\ldots\}\subset \pi_{1}(M, p):=\Gamma$ (see \cite{Gr82}).
	Denote by $r_{i}= |\gamma_{i}|$, the length of shortest presentation of $\gamma_{i}$. Up to a subsequence, we have the following commutative diagram of equivariant Gromov-Hausdorff convergence,
	
	\begin{equation*}\label{diag2.1}
		\xymatrix@C=2.5cm{
			(r_{i}^{-1}\tilde{M}, \tilde{p}, \Gamma) \ar[d]_{} \ar[r]^{GH} & (\R^{k}\times \R^s\times X, (0^{k},0^s,x^{*}), G) \ar[d]^{} \\
			(r_{i}^{-1}M, p) \ar[r]^{GH} & (\R^k\times Y,(0^k,y^{*})),  }
	\end{equation*}
	where $X$ contains no lines and $G$ acts on the $\R^k$-factor trivially. According to \cite[Lemma 2.5]{Pan20}, the obit $G((0^{k},x^{*}))$ is not connected.

	Since $\Gamma$ is nilpotent, the limit group $G$ is also nilpotent. And since $\tilde{M}$ has Euclidean volume growth, $X$ is a metric cone with vertex $x^{*}$. As $X$ contains no lines, every isometry of $X$ must fix the vertex $x^{*}$. Combining with the non-connectedness of $G((0^{k},x^{*}))$, by \cite[Lemma 3.1]{H24}, we deduce that any tangent cone, $T_{y^*}Y$, of $Y$ at $y^{*}$ splits an $\R$-factor. Hence after passing to a subsequence, there exists $s_i\to\infty$, such that $s_ir_i^{-1}\to 0$, and $$(s_ir_i^{-1}M,p)\GH(\R^k\times T_{y^*}Y,(0^k,y'))=(\R^{k+1}\times Y_1,(0^{k+1},y')),$$which is the first conclusion. For the latter conclusion, again passing to a subsequence the following diagram holds.
	
	\begin{equation*}
		\xymatrix@C=2.5cm{
			(s_ir_i^{-1}\bar{M}, \bar{p}) \ar[d]_{} \ar[r]^{GH} & (\bar X, \bar x) \ar[d]^{\sigma} \\
			(s_ir_i^{-1}M, p) \ar[r]^{GH} & (\R^{k+1}\times Y_1,(0^{k+1},y')),  }
	\end{equation*}
	where $\sigma$ is a submetry. So the lines in $\R^{k+1}\times Y_1$ can be lifted through $\sigma$ to $\bar X$. Thus $\bar X$ splits an $\R^{k+1}$-factor.
\end{proof}

\section{A Simply Connectedness Lemma}\label{section3}

It is well known that if an open $3$-manifold with nonnegative Ricci curvature has Euclidean volume growth, then it is diffeomorphic to $\R^3$ \cite{Zh93,Liu13}. In particular, it is simply connected. However, this result does not extend to dimensions $4$ and higher. In this section, we prove that if an open $n$-manifold $M$ with nonnegative Ricci curvature has Euclidean volume growth ($n\ge 3$), and has one asymptotic cone splitting an $\R^{n-3}$-factor, then $M$ is simply connected. This is crucial for the proof of our main theorems.

\begin{thm}\label{SimplyConnectnessOf(n-3)symmetry}
	Let $N$ be an open $n$-manifold with nonnegative Ricci curvature which has Euclidean volume growth ($n\ge  3$).
	If there exists an asymptotic cone of $N$ which splits an $\R^{n-3}$-factor, then $N$ is simply connected.
\end{thm}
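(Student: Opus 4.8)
The plan is to prove directly that $N$ is homeomorphic to $\R^{n}$, which is of course stronger than simple connectedness; for $n=3$ this recovers \cite{Zh93,Liu13}, and the argument treats all $n\ge 3$ uniformly (when $n=3$ the splitting map below is vacuous). Fix $r_{i}\to\infty$ along which $(r_{i}^{-1}N,p)$ converges to an asymptotic cone $\R^{n-3}\times C(Z)$, where $C(Z)$ is a $3$-dimensional metric cone with vertex $o$ and $Z$ is a $2$-dimensional length space with $\diam(Z)\le\pi$; if the cone factor splits further the argument only simplifies. By Cheeger-Colding's theory there are an $\epsilon_{i}$-splitting map $u_{i}=(u_{i}^{1},\dots,u_{i}^{n-3})\colon B_{100r_{i}}(p)\to\R^{n-3}$ ($\epsilon_{i}\to 0$) converging to the projection onto the $\R^{n-3}$-factor, and a Green-type function $b_{i}\colon B_{100r_{i}}(p)\to[0,\infty)$ converging to the distance from $(0^{n-3},o)$; set $w_{i}:=\sqrt{b_{i}^{2}-|u_{i}|^{2}}$, which converges to the cone-radial function on the $C(Z)$-factor.

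The key input is the topological stability of good level sets of \cite{BPS24}. For a full-measure set of parameters $(t,s)\in[-1,1]^{n-3}\times[8,9]$ — which we in addition take with $t$ arbitrarily close to $0^{n-3}$, $t$ a regular value of $u_{i}$, and $s$ a regular value of $w_{i}|_{\{u_{i}=t\}}$ — the slice $\Sigma_{i}:=\{u_{i}=t\}\cap\{w_{i}=s\}$ is homeomorphic to $Z$, the region $P_{i}:=\{u_{i}=t\}\cap\{w_{i}\le s\}$ is homeomorphic to the closed metric ball $\overline{B}^{C(Z)}_{s}(o)$, and more generally the open region $V_{i}:=\{|u_{i}|<10,\ w_{i}<10\}$ is homeomorphic to $(-10,10)^{n-3}\times B^{C(Z)}_{10}(o)$. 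First I would identify $Z$. Since $\Sigma_{i}$ is a regular level set of smooth functions on the boundaryless manifold $N$ it is a closed $2$-manifold (it is compact since $P_{i}\subset\overline{B}_{Cr_{i}}(p)$), so $Z$ is a closed surface; on the other hand $Z$, being the cross-section of a $3$-dimensional non-collapsed Ricci-limit metric cone, is an $\RCD(1,2)$ space, hence by the two-dimensional structure theory a $2$-dimensional Alexandrov space of curvature $\ge 1$, and therefore homeomorphic to $S^{2}$ or $\RP^{2}$. As $\Sigma_{i}=\partial P_{i}$ bounds the compact $3$-manifold $P_{i}$ it is null-cobordant, while $\RP^{2}$ is not; hence $Z\cong S^{2}$.

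Now $C(Z)\cong C(S^{2})\cong\R^{3}$, so $V_{i}\cong(-10,10)^{n-3}\times B^{S^{2}}_{10}\cong\R^{n}$. Because $t$ was chosen close to $0^{n-3}$ while $w_{i}(p)\to 0$, the basepoint $p$ lies well inside $V_{i}$, so $V_{i}\supseteq B_{cr_{i}}(p)$ for a constant $c>0$ independent of $i$, while $\overline{V_{i}}$ is compact and contained in some $B_{Cr_{i}}(p)$. Passing to a subsequence with $Cr_{i}<cr_{i+1}$ gives $\overline{V_{i}}\subset V_{i+1}$ and $\bigcup_{i}V_{i}=N$. Thus $N$ is a monotone union of open $n$-cells, and by the classical fact that a monotone union of open $n$-cells is an open $n$-cell (M. Brown), $N\cong\R^{n}$; in particular $N$ is simply connected.

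The step I expect to be the main obstacle is the second one: extracting from \cite{BPS24} the topological control not merely of the good slices but of the regions they bound — in particular the behaviour near the cone vertex — together with the use of the classification of $2$-dimensional non-collapsed Ricci limits to identify $Z$ with $S^{2}$. Once $Z\cong S^{2}$ is known, so that the model cone $C(Z)$ is topologically $\R^{3}$ and hence locally contractible, the rest is soft: trivial normal bundles and Brown's monotone-union theorem. Should only slice-level stability be available, one instead shows that each $V_{i}$ is simply connected (its Gromov-Hausdorff limit $\overline{B}^{C(S^{2})}_{s}(o)$ being contractible) and concludes $\pi_{1}(N)=\varinjlim\pi_{1}(V_{i})=1$, at the cost of not identifying the homeomorphism type of $N$.
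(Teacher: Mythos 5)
Your first half (choosing good values of the codimension-$2$ map $(u_i,w_i)$, noting that the slice $\Sigma_i$ is a closed surface homeomorphic to $Z$ that bounds the compact $3$-manifold $\{u_i=t\}\cap\{w_i\le s\}$, and ruling out $\RP^2$ by null-cobordism so that $Z\cong S^2$) is exactly the reasoning used in the paper and in \cite{BPS24}. The second half, however, has a genuine gap, and it is exactly the point you flagged as ``the main obstacle''. The claim that the region $V_i=\{|u_i|<10,\ w_i<10\}$ is homeomorphic to $(-10,10)^{n-3}\times B^{C(Z)}_{10}(o)$ (and likewise that $P_i\cong \overline B{}^{C(Z)}_{s}(o)$) is not contained in \cite{BPS24}: their slicing theorem and topological regularity results control only the good $2$-dimensional level sets, where one has splitting maps at \emph{all} scales along the slice and can run an intrinsic Reifenberg-type argument. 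The region $V_i$ contains the whole neighborhood of the singular vertex set $\R^{n-3}\times\{o\}$ of the limit, where no such all-scale control is available (the cone $C(Z)$ is singular there unless $Z$ is round), so no stability theorem applies; establishing such region-level control would be at least as hard as the theorem you are trying to prove, and the conclusion $N\cong\R^n$ you aim for is in any case far stronger than what is needed or known. Your fallback is also not valid: Gromov--Hausdorff proximity of $V_i$ to a contractible model gives no information about $\pi_1(V_i)$ without a topological stability theorem (a flat torus of tiny diameter is GH-close to a point), and again the obstruction sits precisely at the vertex region.

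For comparison, the paper avoids any topological statement about regions containing the vertex. It first notes $\pi_1(N)$ is finite by Li/Anderson, passes to the rescaled universal cover, where the deck orbits have diameter tending to $0$, and takes a good slice $\Sigma_i\cong S^2$ \emph{in the base}. The whole work then goes into showing that the preimage $\check\Sigma_i=\pi_i^{-1}(\Sigma_i)$ upstairs is \emph{connected}: this uses the fiberwise connectedness statement \cite[Proposition 7.1(ii)]{BPS24} for the lifted almost splitting maps at a definite scale, together with a closest-pair/midpoint argument showing that two distinct components would have to come within a scale where that connectedness applies. Since $\check\Sigma_i\to\Sigma_i\cong S^2$ is then a connected covering of a simply connected space, it is one-sheeted, forcing the deck group to be trivial. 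If you want to salvage your outline, the fix is not to upgrade slice-level stability to region-level stability, but to replace the exhaustion argument by this covering-space argument over the $S^2$-slice.
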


For our purposes, we require a slightly more general version of Theorem \ref{SimplyConnectnessOf(n-3)symmetry}, as stated below, whose proof follows the same approach as that of \cite[Lemma 9.12]{BPS24}, which corresponds to the case of $n=3$ in Theorem \ref{SimplyConnectnessOf(n-3)symmetry}. The main technical tools are  \cite[Theorem 5.2]{BPS24}, an annular version of the slicing theorem by Cheeger-Naber \cite{CN15}, and \cite[Proposition 7.1]{BPS24}, a topological regularity of good level sets
of almost splitting maps. Consequently, we will only present a sketched proof here.

\begin{lem}\label{KeyLemma}
Let $(M_{i}, p_{i})\GH(X, p_{\infty})$ be a convergent sequence satisfying the followings.
\begin{enumerate}
	\item [(1)] Each $M_i$ is a complete $n$-manifold with nonnegative Ricci curvature and there exists $v>0$ such that for any $i$, $\vol(B_1(p_i))\ge v>0$.
	
	\item [(2)] For each $i$, $\pi_i:(\check M_i,\check p_i)\to(M_i,p_i) $ is a normal covering with deck transformation group $\Gamma_i$ satisfying that $\diam(\Gamma_i(\check p_{i}))\to0$ as $i\rightarrow0$.
	
	\item [(3)] $(X,p_\infty)$ is isometric to $(\R^{n-3}\times C(Z),(0^{n-3},z^*))$ where $z^*$ is a vertex of $C(Z)$ and $Z$ is a length metric space with $\diam (Z)\le\pi$.

\end{enumerate}

Then for every large $i$, $\Gamma_{i}$ is trivial.
\end{lem}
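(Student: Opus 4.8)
The plan is to argue by contradiction: assume, after passing to a subsequence, that $\Gamma_i\neq\{e\}$ for every $i$. The whole argument then reduces to producing a radius $\delta>0$, independent of $i$, together with, for every large $i$, a connected and simply connected open set $\Omega_i\subset M_i$ containing $B_\delta(p_i)$. Granting this, the restriction $\pi_i^{-1}(\Omega_i)\to\Omega_i$ is a covering of a simply connected space, hence trivial, so the sheet $\Omega_i^0\ni\check p_i$ is mapped homeomorphically onto $\Omega_i$ by $\pi_i$. Since $\pi_i$ is $1$-Lipschitz we have $B_\delta(\check p_i)\subset\pi_i^{-1}(\Omega_i)$, and being connected and meeting $\Omega_i^0$ it is contained in $\Omega_i^0$. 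For $i$ large enough that $\diam(\Gamma_i(\check p_i))<\delta$, every $\gamma\in\Gamma_i$ sends $\check p_i$ into $B_\delta(\check p_i)\subset\Omega_i^0$; as $\pi_i(\gamma\check p_i)=p_i=\pi_i(\check p_i)$ and $\pi_i|_{\Omega_i^0}$ is injective, $\gamma\check p_i=\check p_i$, and freeness of the $\Gamma_i$-action forces $\gamma=e$. This contradicts $\Gamma_i\neq\{e\}$, so the entire task is the construction of the sets $\Omega_i$.

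For this I would follow the scheme of \cite[Lemma 9.12]{BPS24} (which is precisely the case $n=3$). By Cheeger--Colding theory the convergence is non-collapsed by (1), so on $B_{100}(p_i)$ there are an $\epsilon_i$-splitting map $v_i\colon B_{100}(p_i)\to\R^{n-3}$ with $\epsilon_i\to0$, converging uniformly to the projection $P\colon\R^{n-3}\times C(Z)\to\R^{n-3}$, and a Green-type distance function $b_i\colon B_{100}(p_i)\to\R$ with $b_i=d_{p_i}+o(1)$ and $b_i\to d_{p_\infty}$. On $X$ the limit $d_{p_\infty}^2/2=(|x|^2+r^2)/2$ (with $r$ the radial coordinate of $C(Z)$) has Hessian equal to the metric, so $b_i^2/2$ has Hessian close to the metric in the integral sense, and the slicing pair $(v_i,b_i)$ fits the hypotheses of the annular slicing theorem \cite[Theorem 5.2]{BPS24} on $A_{1,10}(p_i)$. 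Combining that theorem with the topological regularity of good level sets \cite[Proposition 7.1]{BPS24}, for every large $i$ one finds a good value $(t_i,s_i)\in[-1,1]^{n-3}\times[8,9]$, with $t_i$ in addition a regular value of $v_i$, such that
$$\Sigma_i:=\bigl(v_i,\sqrt{b_i^2-|v_i|^2}\bigr)^{-1}(t_i,s_i)=v_i^{-1}(t_i)\cap\{b_i=\sqrt{s_i^2+|t_i|^2}\}$$
is a closed topological surface homeomorphic to $Z$, and $\Sigma_i=\partial D_i$, where $D_i:=v_i^{-1}(t_i)\cap\{b_i\le\sqrt{s_i^2+|t_i|^2}\}$ is a compact $3$-manifold converging to a truncated solid cone over $Z$. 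As $Z$ is the cross-section of the nonnegatively Ricci-curved metric cone $C(Z)$, it is a closed $2$-dimensional $\RCD(1,2)$ space; being a closed topological surface with such a curvature bound it is homeomorphic to $S^2$ or $\RP^2$, and since $\RP^2$ does not bound a compact $3$-manifold whereas $Z\cong\Sigma_i=\partial D_i$ does, we get $Z\cong S^2$. In particular $X\cong\R^{n-3}\times C(S^2)\cong\R^n$ as topological spaces.

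It then remains to produce $\Omega_i$. Fix a small $a>0$ and an $s\in[8,9]$; for every large $i$ these can be perturbed slightly so that $\Omega_i:=\{|v_i|<a\}\cap\{\sqrt{b_i^2-|v_i|^2}<s\}$, the preimage of the box $B^{n-3}_a(0)\times[0,s)$ under $\bigl(v_i,\sqrt{b_i^2-|v_i|^2}\bigr)$, is a good region in the sense of the slicing theorem. Since $v_i$ is uniformly Lipschitz with $v_i(p_i)\to0$, and $b_i=d_{p_i}+o(1)$, one has $B_\delta(p_i)\subset\Omega_i$ for all large $i$ whenever $\delta$ is a fixed constant small enough relative to $a$. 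The limit of $\Omega_i$ is $B^{n-3}_a(0)\times B^{C(Z)}_s(z^*)$, the product of a Euclidean ball with a cone ball; it deformation retracts onto $p_\infty$, hence is contractible, and — using $Z\cong S^2$ — is homeomorphic to an open $n$-ball. By the same slicing-and-topological-regularity machinery \cite[Theorem 5.2, Proposition 7.1]{BPS24}, applied exactly as in \cite[Lemma 9.12]{BPS24}, $\Omega_i$ is homeomorphic to this contractible limit for all large $i$; in particular each $\Omega_i$ is connected and simply connected. This supplies the data required in the first paragraph and completes the argument.

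The step I expect to be the main obstacle is precisely this last topological stability: identifying a neighborhood of $p_i$ in $M_i$, cut out by $v_i$ and $b_i$, with the corresponding neighborhood of the cone point $p_\infty$ of $X$. The difficulty is that $p_\infty$ need not be a smooth (or metrically regular) point of $X$ — only, once $Z\cong S^2$, a \emph{topological} manifold point — so the identification cannot come from classical $\epsilon$-regularity; instead one must use that a neighborhood of $p_\infty$ is assembled, in a topologically stable way, from the good level sets $\Sigma_i\cong S^2$ of the codimension-$(n-2)$ almost splitting map $(v_i,b_i)$. Providing this stability is exactly the content of the recent work \cite{BPS24} (Theorem 5.2 and Proposition 7.1 there), which is why only the above sketch is given here.
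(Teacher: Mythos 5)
Your reduction in the first paragraph (a uniform-size, connected, simply connected $\Omega_i\supset B_\delta(p_i)$ forces the covering to trivialize over $\Omega_i$, and then $\diam(\Gamma_i(\check p_i))<\delta$ plus freeness of deck transformations kills $\Gamma_i$) is sound, but it shifts all of the difficulty onto producing $\Omega_i$, and that is where the proposal has a genuine gap. You claim that $\Omega_i=\{|v_i|<a\}\cap\{u_i<s\}$ is homeomorphic to the contractible limit region ``by the same slicing-and-topological-regularity machinery'' of \cite[Theorem 5.2, Proposition 7.1]{BPS24}. Those results do not provide this: they give, for good values, that the codimension-$(n-2)$ \emph{level sets} of $(v_i,u_i)$ are topological $2$-manifolds with uniform local contractibility; they say nothing about the topology of a solid sublevel region, and in particular give no control near the set $\{u_i\approx 0\}$, i.e.\ near $p_i$ and the singular axis $\R^{n-3}\times\{z^*\}$, where $(v_i,u_i)$ is not an almost splitting map at small scales and no slicing applies. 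Gromov--Hausdorff closeness of $\Omega_i$ to a contractible cone region does not by itself yield $\pi_1(\Omega_i)=\{e\}$, and no stability theorem applies at the singular vertex $p_\infty$ (it is not a metrically regular point). If a definite-size topological (or even just $\pi_1$-) stability statement for neighborhoods of $p_i$ were available, the lemma would be immediate; the absence of such a statement is precisely why a more roundabout argument is needed.

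For comparison, the paper's proof (following \cite[Lemma 9.12]{BPS24}) avoids ever touching a neighborhood of $p_i$: it only uses one good slice $\Sigma_i=\{(v_i,u_i)=(x_i,y_i)\}$ at distance about $8$ from $p_i$, homeomorphic to $Z\cong S^2$, and then proves the genuinely new point that the preimage $\check\Sigma_i=\pi_i^{-1}(\Sigma_i)$ in the cover is \emph{connected}. This is done by a minimal-separation argument: if $\check\Sigma_i$ had two components at minimal distance $s_i$, one first shows $s_i\to 0$ (a midpoint argument in the limit $\R^{n-3}\times C(\check Z)$), and then the local fiberwise connectivity of level sets of almost splitting maps, \cite[Proposition 7.1(ii)]{BPS24} applied to the lifted map $(\check v_i,\check u_i)$ at scale comparable to $s_i$, produces a path in the level set joining the two nearest points, contradicting minimality. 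Once $\check\Sigma_i$ is connected, the covering $\check\Sigma_i\to\Sigma_i\cong S^2$ has one sheet, so $\Gamma_i$ is trivial. Your proposal contains no substitute for this connectivity step, and the step you do propose is not delivered by the results you cite; as written, the proof is incomplete at its decisive point.
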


\begin{proof}[Sketched Proof]
Up to a subsequence, we may assume $(\check M_i,\check p_i,\Gamma_i)\GH(Y,\check p_\infty,G)$ and $\pi_i:(\check M_i,\check p_i)\to(M_i,p_i)$ converges to a submetry $\pi:(Y,\check p_\infty)\to(X,p_\infty)$. Note that, by $\vol(B_{1}(\check{p}_{i}))\ge\vol (B_1(p_i))\ge v$, according to Cheeger-Colding's theory \cite{CC96}, every tangent cone of $Y$ is a metric cone. By allowing a slow blow-up, we can assume without losing our conditions that $Y$ itself is a metric cone with vertex $\check{p}_\infty$. By lifting the $\R^{n-3}$-factor of $X$ through $\pi$, we conclude that $(Y,\check p_\infty)$ is isometric to $(\R^{n-3}\times C(\check{Z}),(0^{n-3},\check z^*))$ for a length metric space $\check Z$ with $\diam (\check Z)\le \pi$. So the following commutative equivariant diagram holds,

\begin{equation}\label{diag19}
	\xymatrix@C=2.5cm{
		(\check{M}_i, \check{p}_i,\Gamma_i) \ar[d]_{\pi_i} \ar[r]^{GH} &\left(\R^{n-3}\times C(\check Z),(0^{n-3},\check z^*),G\right) \ar[d]^{\pi} \\
		(M_i, p_i) \ar[r]^{GH} &
		\left(\R^{n-3}\times C(Z),(0^{n-3},z^*)\right),  }
\end{equation}
where the limit group $G$ acts on $\R^{n-3}$-factor trivially. Because $\diam(\Gamma_i(\check p_{i}))\to0$, we have that $G$ fixes $(0^{n-3},\check z^*)$. Further, we have the following simple facts.
\customitemize{F}
	\item\label{itm:F1}	$B_{r}(\check{p}_{i})\subseteq \pi_{i}^{-1}(B_{r}({p}_{i}))\subseteq B_{r+\epsilon_i}(\check{p}_{i})$, where $\epsilon_i\to0$.
 	\item\label{itm:F2}  $G$ is a closed subgroup of $\Isom (\check Z)$ and for any $(x,t,\check z)\in\R^{n-3}\times C(\check Z)$, $\pi(x,t,\check z)=(x,t,z)$ for some $z\in Z$.
\end{enumerate}

According to \cite[Theorem 8.1]{BPS24}, $Z$ and $\check{Z}$ are both homeomorphic to the $2$-sphere $S^2$. For our goal, we make the following claim:
\begin{claim}\label{claim3.4}
After passing to a subsequence, for each large $i$,
\customitemize{c}
	\item \label{itm:c1} There exists an embedding submanifold $\Sigma_i\subset B_9(p_i)$, which is homeomorphic to the $2$-sphere. Further, there exists $(x_\infty,y_\infty)\in B_1(0^{n-3})\times[8,9]$, $\Sigma_i$ converges to $\Sigma_\infty:=\{(x_\infty,y_\infty,z)\in\R^{n-3}\times C(Z)|z\in Z\}$ with respect to the sequence (\ref{diag19}).

	\item \label{itm:c2} $\check\Sigma_i:=\pi_i^{-1}(\Sigma_i)$ is connected.
\end{enumerate}
\end{claim}
Assuming (\ref{itm:c1}) and (\ref{itm:c2}), and combining the elementary fact that the restriction map $\pi_i : \check{\Sigma}_i \to \Sigma_i \approx S^2$ is a covering map, it follows that $\pi_i$ is a one-sheeted cover, which yields the desired conclusion. So what remains to be proven is Claim \ref{claim3.4}.

In the following discussion, $\{\epsilon_i\}$ denotes a positive sequence that converges to $0$ as $i \to \infty$. Note that the specific value of $\epsilon_i$ is not essential, which may increase from line to line without explicit mention. The key point is that $\epsilon_i \to 0$ as $i \to \infty$.

The proof of claim (\ref{itm:c1}) follows verbatim from the one of \cite[Theorem 8.1]{BPS24}, as we are in the same situation. Here, we only provide a sketch of the construction of $\Sigma_i$ in (\ref{itm:c1}).

For every $r > s > 0$, define $A_{r,s}(p_i) := B_r(p_i) \setminus \bar{B}_s(p_i)$. According to Theorem 4.6 of \cite{BPS24}, for every large $i$, there exists a good Green distance with center $p_{i}$, denoted by $b_{i}:B_{20}(p_i)\to[0,\infty)$, satisfying
\begin{equation}\label{0:34}
	\sup_{B_{20}(p_{i})}|b_{i}-d_{p_{i}}|\leq \epsilon_i,
\end{equation}
where $d_{p_i}$ stands for the distance function with respect to $p_i$. In another aspect, by Cheeger-Colding's theory (see \cite{CC96}, \cite{CN15} etc.), for every sufficiently large $i$, there exists an $\epsilon_i$-splitting map $v_i=(v_i^1,\ldots,v_i^{n-3}) : (B_{100}(p_{i}),p_i)\rightarrow (\R^{n-3},0^{n-3})$ such that
\begin{equation}\label{0:35}
	v_i \text{ uniformly converges to the projection } P_1:\R^{n-3}\times C(Z)\to\R^{n-3}.
\end{equation}

On the set $\{b_{i}> |v_{i}|\}$, let $u_{i} :=\sqrt{b_{i}^{2}-v_{i}^{2}}$ and $A_i:=A_{9,8}(p_i) \cap \{|v_{i}| < 1\}$. Combining (\ref{0:34}) and (\ref{0:35}), $u_i:A_i\to\R$ uniformly converges to the distance function $d_{\R^{n-3}}$ with respect to the subset $\R^{n-3}\times \{z^*\}\subset \R^{n-3}\times C(Z)$.

By the slice theorem \cite[Theorem 5.2]{BPS24}, there exist a positive constant $c_{0}$ (depending on $X$ and independent of $i$) and Borel sets $\mathcal{B}_{i}\subset B_{1}(0^{n-3})\times[0, 10]$ such that
\customitemize{s}
	\item\label{itm:s1} the Hausdorff measure $\mathcal{H}^{n-2}(\mathcal{B}_{i}) \leq\epsilon_i$;
	\item\label{itm:s2} for every $(x, y)\in B_{1}(0^{n-3})\times[0, 10]\setminus \mathcal{B}_{i}$ with $8\leq \sqrt{|x|^{2}+y^{2}}\leq 9 $, the level set $\{(v_{i}, u_{i}) = (x, y)\}$ is not empty;
	\item\label{itm:s3} for every $s\in(0, c_{0})$ and every $q\in A_{i}$ with $(v_{i}, u_{i})(q) \notin \mathcal{B}_{i}$, there exists a lower triangular $(n-2)\times(n-2)$ matrix $L_{q,s}$ with positive diagonal entries such that $L_{q,s}\circ(v_{i}, u_{i}) : B_{s}(q)\rightarrow\R^{n-2}$ is an $\epsilon_i$-splitting map.
\end{enumerate}

By (\ref{itm:s1}), (\ref{itm:s2}) and Sard's theorem, for each $i$, we choose $(x_i,y_i)\in B_{0.9}(0^{n-3})\times[0,10]\setminus\mathcal{B}_i$ with $8.1\leq \sqrt{|x_{i}|^{2}+y_{i}^{2}}\leq 8.9$ such that $\Sigma_i:=\{(v_i,u_i)=(x_i,y_i)\}\neq\emptyset$ and $(x_i,y_i)$ is a regular value of $(v_i,u_i)$. Note that by (\ref{0:34}), for each large $i$, $\Sigma_i$ is contained in $A_i$. By (\ref{itm:s3}) and \cite[Proposition 7.1]{BPS24}, $\Sigma_i$ is a uniformly locally contractible $2$-dimensional manifold, uniformly also in $i$. And $\Sigma_i$ is the boundary of the $3$-dimensional compact submanifold $v_i^{-1}(x_i)\cap \{u_i\le y_i\}$. Up to a subsequence, we may assume $(x_i,y_i)\to(x_\infty,y_\infty)\in B_1(0^{n-3})\times [0,10]$ with $8\le \sqrt{|x_\infty^2|+y_\infty^2}\le 9 $. Then (\ref{itm:c1}) follows from the same argument as in the proof of \cite[Theorem 8.1]{BPS24}.

Now we proceed to prove (\ref{itm:c2}). Argue by contradiction. We assume that, for every $i$, $\check \Sigma_i$ is not connected. For a fixed $i$, we can express it as $\check\Sigma_i = \sqcup_{j=1}^k S_j$, where $S_1, \ldots, S_k$ are the mutually distinct connected components of $\check\Sigma_i$ for some $k>1$. Let $s_i:=\min\{d(S_{j_1},S_{j_2})|j_1\neq j_2\}>0$. By a re-index, we may assume that there exist $q_{i,1}\in S_1$, $q_{i,2}\in S_2$ such that $d(q_{i,1},q_{i,2})=s_i$.

Observe that
\begin{equation}\label{s_i}
	\lim_{i\to\infty}s_i=0.
\end{equation}
If not, up to a subsequence, we assume $s_i\ge \eta >0$. Referring to the diagram (\ref{diag19}), based on (\ref{itm:c1}) and facts (\ref{itm:F1}) and (\ref{itm:F2}), we observe that
$\check\Sigma_{i}\subset B_{10}(\check p_i)$ converges to $\check \Sigma_\infty:=\{(x_\infty,y_\infty,\check z)\in\R^{n-3}\times C(\check Z)|\check z\in \check Z\}$. Thus passing to a subsequence again, we may assume $q_{i,j}\to q_j\in\check Z_\infty$ as $i\to\infty$, $j=1,2$, with $q_1\neq q_2$. Note that $\check Z_\infty$ is an $\mathrm{RCD}(1,2)$-space, thus there exists a midpoint, $q_3$, of $q_1,q_2$ in $ \check Z_\infty$. By the cosine law, $d_Y(q_1,q_3)=d_Y(q_2,q_3)<d_Y(q_1,q_2)$, where $d_Y$ is the distance on $Y=\R^{n-3}\times C(\check Z)$. Again using the fact $\check\Sigma_i\to\Check\Sigma_\infty$, there exist $\check\Sigma_i\ni q_{i,3}\to q_3$. So for every large $i$, $\max\left(d(q_{i,1},q_{i,3}),d(q_{i,2},q_{i,3})\right)<d(q_{i,1},q_{i,2})=s_i$.
By the definition of $s_i$, $q_{i,3}\in S_1\cap S_2$, which is impossible.

Let $\check{v}_i:=v_i\circ\pi_i:B_{100}(\check p_i)\to\R^{n-3}$, $\check{b}_i:=b_i\circ \pi_i:B_{20}(\check p_i)\to[0,\infty)$ and $\check{u}_i:=u_i\circ \pi_i:\pi_i^{-1}(A_i)\to\R$. Note that
\begin{equation}\label{15:18}
	\check\Sigma_i=\{(\check v_i,\check u_i)=(x_i,y_i)\},
\end{equation}
where the right inclusion is obvious and the left inclusion follows from (\ref{itm:F1}) and the fact that $\Sigma_i\subset B_9(p_i)$. Combining (\ref{15:18}), (\ref{itm:s3}), the choice of $x_i,y_i$, and the covering lemma \cite[Lemma 1.6]{KW11}, we conclude that, for every large $i$, and any $s\in(0, c_{0})$, there exists a lower triangular $(n-2)\times(n-2)$ matrix $L_{q_{i,1},s}$ with positive diagonal entries such that $L_{q_{i,1},s}\circ(\check v_{i}, \check u_{i}) : B_{s}(q_{i,1})\rightarrow\R^{n-2}$ is an $\epsilon_i$-splitting map. We apply \cite[Proposition 7.1 (ii)]{BPS24} with $\omega_i:=(\check v_i,\check u_i)-(x_i,y_i):(B_{\frac12c_0}(q_{i,1}),q_{i,1})\to(\R^{n-2},0^{n-2})$ to conclude that there exists universal $\overline C>0$, $r(n,v)>0$, for any $r\le r(n,v)$, $\check q\in \omega_i^{-1}(0^{n-2})\cap B_{r}(q_{i,1})$, there exists a curve $\sigma:[0,1]\to\omega_i^{-1}(0^{n-2})\cap B_{\overline Cr}(q_{i,1})$ from $q_{i,1}$ to $\check q$. Now, by (\ref{s_i}), for sufficiently large $i$, one can choose $\check{q} := q_{i,2}$ and $r = 2s_i$, which yields a contradiction to the choice of $q_{i,1}$ and $q_{i,2}$. The proof is thus complete.

\end{proof}

Theorem \ref{SimplyConnectnessOf(n-3)symmetry} is an immediate corollary of the above lemma.

\begin{proof}[Proof of Theorem \ref{SimplyConnectnessOf(n-3)symmetry}]
Take $p\in N$ and let $(\tilde{N},\tilde{p})\xrightarrow{\pi}(N, p)$ be the universal cover with deck transformation $\Gamma=\pi_{1}(N, p)$. By \cite{L86} (or \cite{An90-II}), $\Gamma$ is a finite group.
Note that $\diam(\pi^{-1}(p))$ is finite.

By the assumption and Cheeger-Colding's theory, there exists a sequence $r_{i}\rightarrow \infty$ such that
\begin{align*}
(r_{i}^{-1}N, p)\xrightarrow{GH}(\R^{n-3}\times C(Z), (0,z^{*})).
\end{align*}
Obviously, $\diam^{r_i^{-1}N}(\Gamma(\tilde p))\rightarrow 0$. By Lemma \ref{KeyLemma}, we conclude that $\Gamma$ is trivial, i.e. $N$ is simply connected.
\end{proof}

\section{A Stability Lemma for Isotropy Groups}\label{section4}

In this section, we establish a lemma asserting that, under certain restrictions, the isomorphism type of the isotropy subgroups of limit groups of equivariant asymptotic cones at the reference point is uniquely determined. This lemma is used to prove Theorem \ref{thm-finite generation}.

Throughout this section, let $(M,p,\Gamma)$ be an open $n$-manifold with nonnegative Ricci curvature, and $\Gamma$ be a closed subgroup of $\Isom(M)$. Let $\Omega(M,\Gamma)$ be the union of all equivariant asymptotic cones of $(M,\Gamma)$.
We further assume that $M$ has Euclidean volume growth.

For simplicity of notions, for every $(X,x^*,G)\in \Omega(M,\Gamma)$, let $I(G)$ denote the isotropy of $G$ at the reference point $x^*$, and $CS(X)$ denote the unit cross section of $X$ equipped with the intrinsic metric.
Hence $I(G)$ is a closed subgroup of $\Isom (CS(X))$.
According to Colding-Naber \cite{CN12}, $G$ is a Lie group, and so is $I(G)$.
By the $\mathrm{RCD}$-theory, $CS(X)$ is a non-collapsed $\mathrm{RCD}(n-2,n-1)$-space (see \cite{K15,DePGig18}).

The main goal of this section is the following.
\begin{lem}\label{StabilityOfIsotropy}
	If $\max\{\dim(I(G))|(X,x^*,G)\in \Omega(M,\Gamma)\}\le 1$, then there exists a compact Lie group $K$, such that for any $(X,x^*,G)\in \Omega(M,\Gamma)$, $I(G)$ is isomorphic to $K$.
\end{lem}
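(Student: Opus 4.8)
The plan is to run Pan's critical-rescaling argument on the rescaling flow $t\mapsto(e^{-t}M,p,\Gamma)$, using the hypothesis $\dim I(G)\le 1$ to reduce the possible isomorphism types of $I(G)$ to a finite list, and the structure theory of non-collapsed $\RCD(n-2,n-1)$ cross sections to show that no transition between types can occur along the flow. First I would record the coarse features of $\Omega(M,\Gamma)$: since $\Ric\ge 0$ and $M$ has Euclidean volume growth, the trajectory $\{(e^{-t}M,p,\Gamma):t\ge 0\}$ together with its limit set $\Omega(M,\Gamma)$ is precompact in the equivariant pointed Gromov--Hausdorff topology, and as the trajectory is continuous, $\Omega(M,\Gamma)$ is compact and connected. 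For each $(X,x^*,G)\in\Omega(M,\Gamma)$ the cone splits as $\R^m\times C(W)$ with $W$ line-free, and $I(G)$ is a compact Lie group (Colding--Naber) inside $\mathrm{O}(m)\times\Isom(W)$, with $\dim I(G)\le 1$. A Bishop--Gromov / Margulis-type estimate on the uniformly non-collapsed cross sections $CS(X)$ furnishes a constant $N_0=N_0(n,v)$ bounding the order of any finite subgroup of $\Isom(CS(X))$ fixing a point; hence each $I(G)$ is \emph{either} finite of order $\le N_0$ \emph{or} has identity component $S^1$ with at most $N_0$ components, so the set $\mathcal K$ of possible isomorphism types is finite.

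Next I would set up the semicontinuity. Along any convergent sequence $(X_i,x_i^*,G_i)\to(X_\infty,x_\infty^*,G_\infty)$ in $\Omega(M,\Gamma)$, after passing to a subsequence the unit cross sections converge equivariantly for the $I(G_i)$-actions, and the limit group $I_\infty:=\lim_i I(G_i)$ is a closed subgroup of $I(G_\infty)$ --- but possibly \emph{strictly} smaller, since an element of $G_\infty$ may fix $x_\infty^*$ while being the limit of elements of $G_i$ that move $x_i^*$ by an amount tending to $0$. The goal is to show that the isomorphism type $[I(G)]\in\mathcal K$ is locally constant on $\Omega(M,\Gamma)$; the connectedness obtained above then produces a single compact Lie group $K$ with $I(G)\cong K$ throughout, which is the assertion.

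To prove local constancy I would argue by contradiction via a critical scale. If $[I(G)]$ is not locally constant there are scales $s_i^{(1)},s_i^{(2)}\to\infty$ whose limits have non-isomorphic isotropy; sliding the scale continuously from $s_i^{(1)}$ to $s_i^{(2)}$ and using finiteness of $\mathcal K$, one extracts a critical scale $r_i$ so that $(r_i^{-1}M,p,\Gamma)\to(X_0,x_0^*,G_0)$ is simultaneously an equivariant limit of rescaled copies with isotropy type $A$ and of rescaled copies with isotropy type $B\neq A$. By the semicontinuity above, $I(G_0)$ contains subgroups that are equivariant limits of type-$A$ and of type-$B$ isotropies, and one now plays off the two sides. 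If one side has finite isotropy of order $a\le N_0$, then taking powers of a near-generator shows the limit-of-isotropies already has $\ge a$ distinct elements; combining this with the behaviour on the other side, and with the fact that a point-fixing circle in $\Isom(CS(X_0))$ cannot be an equivariant limit of point-fixing finite subgroups of order $\le N_0$ (otherwise one exhibits more than $N_0$ distinct limit elements), one contradicts either $\dim I(G_0)\le 1$ or the bound $N_0$. In the remaining sub-case $A$ and $B$ are both finite, and the order would have to drop across $r_i$ from both sides, which is incompatible with the two-sided approach to the critical scale. This eliminates every possible transition, so $[I(G)]$ is locally constant and hence constant on $\Omega(M,\Gamma)$.

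I expect the third step to be the main obstacle. In the abstract category of isometric group actions the isomorphism type of the isotropy genuinely \emph{can} jump in an equivariant limit --- e.g. finite cyclic rotation groups converging to $SO(2)$, or $I(G_\infty)$ strictly exceeding $\lim_i I(G_i)$ as noted --- so the proof must use essentially that the $(X,x^*,G)$ are asymptotic cones of one fixed manifold, so that the transition across scales is governed by the monotone behaviour of ``small'' subgroups of $\Gamma$ along the rescaling flow, together with the volume/Margulis rigidity of the non-collapsed $\RCD(n-2,n-1)$ cross sections. The hypothesis $\dim I(G)\le 1$ is precisely what keeps $\mathcal K$ finite and reduces the analysis at the critical scale to the handful of transitions (change of finite order; finite versus circle identity component; change of the finite component group) treated above.
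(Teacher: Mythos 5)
There is a genuine gap, and it sits exactly where you predicted the main obstacle would be. First, your finiteness of the list $\mathcal K$ of possible isotropy types rests on a claimed Margulis/Bishop--Gromov bound $N_0(n,v)$ for the order of finite subgroups of $\Isom(CS(X))$ fixing a point. No such bound exists: a non-collapsed $\RCD(n-2,n-1)$ cross section can perfectly well carry a circle of isometries (e.g.\ when $X=\R^k\times C(S^1_r)$, or a cross section with rotational symmetry), and then $\Isom(CS(X))$ contains finite cyclic subgroups of arbitrarily large order, all fixing points. The hypothesis $\max\dim I(G)\le 1$ bounds the dimension of the isotropy of the \emph{limit group} $G$, not of $\Isom(CS(X))$, so it does not rescue the bound. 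A priori, before the lemma is proved, different scales could produce finite isotropies $\Z_{q_i}$ with $q_i\to\infty$; ruling this out is part of what must be shown, so your step~1 essentially assumes a weak form of the conclusion. Second, the semicontinuity defect you correctly identify --- that $I(G_\infty)$ may strictly contain $\lim_i I(G_i)$ because elements moving $x_i^*$ slightly can limit to isotropy elements --- is never actually neutralized: the "play off the two sides" case analysis at the critical scale does not explain why a jump of this kind cannot occur, and in the abstract equivariant category it does occur, as you yourself note.

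The paper closes both gaps with two ideas absent from your proposal. It chooses $K:=I(S)$ for an element $(Y,y^*,S)\in\Omega(M,\Gamma)$ whose isotropy has \emph{minimal} dimension and, among those, minimal number of components; no uniform bound over all of $\Omega(M,\Gamma)$ is ever needed, because in the gap sublemma one side consists of isotropies isomorphic to the fixed group $K$ (hence of uniformly bounded order when $\dim K=0$), while on the other side the rigidity results of Ding--Rong-type (\cite{MRW08}, \cite{PR18}) give an \emph{injective} homomorphism of $I(G_{2,i})$ into a subgroup of $I(G)\cong K$, and minimality of $K$ then forces $I(G_{2,i})\cong K$. The semicontinuity problem is handled by property (P): passing to the tangent cone at the reference point (Lemma \ref{ReduceToTangentCone}) replaces $(Y,y^*,S)$ by a cone whose group splits off its $\R^k$-translations without changing the isotropy type, and for such groups the cross sections converge \emph{together with} their isotropy groups (Lemma \ref{ConvergenceofCrossections}), so the limit isotropy is exactly $I(G)$ and cannot silently enlarge. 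Finally, the propagation is one-sided: a critical scaling argument pushes the property "close to a cone with property (P) and isotropy $\cong K$" from the chosen minimal cone to every scale, rather than attempting a two-sided "no transition between arbitrary types" statement. Without the minimal choice of $K$, property (P), and the injective-homomorphism rigidity, your outline cannot be completed as written.
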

The proof of Lemma \ref{StabilityOfIsotropy} follows a similar idea as presented in \cite{Pan19}.

For any $(\R^k\times C(X),(0^k,x^*),G)\in \Omega(M,\Gamma)$, where $\diam(X)<\pi$, we use $(v,t,x)$ to denote an arbitrary point on $\R^k\times C(X)$, where $v\in\R^k$ and $t\in[0,+\infty)$ and $x\in X$.
Recall that $\Isom(\R^k\times C(X))=\Isom (\R^k)\times \Isom (X)$. Hence, for any $g\in G$, there exist $A\in O(k)$, $v_0\in\R^k$, $\mathbf{ g}\in\Isom(X)$ such that $g(v,t,x)=(Av+v_0,t,\mathbf {g}(x))$.

The following definition is borrowed from \cite{Pan19}.
\begin{defn}
	
For $(\R^k \times C(X), (0^k, x^*), G) \in \Omega(M, \Gamma)$, where $\diam(X) < \pi$, we say that $G$ satisfies property (P), if $G=T(G)I(G)$, where $T(G)$ is the subgroup of translations on the $\R^k$-factor of $G$. Specifically, for every $g \in G$ defined by $g(v, t, x) = (Av + v_0, t, \mathbf{g}(x))$, both $g_1(v, t, x) := (Av, t, \mathbf{g}(x))$ and $g_2(v, t, x) := (v + v_0, t, x)$ belong to $G$.
\end{defn}

\begin{lem}\label{ReduceToTangentCone}
	Let $(\R^k\times C(X),(0^k,x^*),H)$ be an equivariant tangent cone of $$(\R^k\times C(X),(0^k,x^*),G)\in\Omega(M,\Gamma)$$ at $(0^k,x^*)$, where $\diam (X)<\pi$. Then $H$ satisfies property (P). Furthermore, $I(H)$ is isomorphic to $I(G)$.
\end{lem}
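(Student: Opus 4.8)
The plan is to take an equivariant tangent cone at $(0^k,x^*)$ of the pointed space $(\R^k\times C(X),(0^k,x^*),G)$, and unwind what the tangent-cone operation does to each piece of the data. Since $C(X)$ is already a metric cone with vertex $x^*$, blowing up at $(0^k,x^*)$ returns the same space: the tangent cone of $\R^k\times C(X)$ at $(0^k,x^*)$ is again isometric to $\R^k\times C(X)$, with $(0^k,x^*)$ as basepoint. So the underlying space does not change; only the group can change, passing from $G$ to a limit group $H$ under the rescalings $\lambda_i\to\infty$ used to form the tangent cone. I would first record this identification carefully, fixing once and for all the isometric identification $T_{(0^k,x^*)}(\R^k\times C(X))\cong \R^k\times C(X)$ and the induced injection $\Isom$-wise.

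Next I would analyze how a fixed $g\in G$, written $g(v,t,x)=(Av+v_0,t,\mathbf g(x))$, behaves under conjugation by the rescaling map $\rho_\lambda(v,t,x)=(\lambda v,\lambda t,x)$ — note the cross-section $X$ is unscaled because distances on the $C(X)$-factor scale through the radial coordinate $t$ only. One computes $\rho_\lambda\circ g\circ\rho_\lambda^{-1}(v,t,x)=(Av+\lambda v_0,t,\mathbf g(x))$. Thus as $\lambda\to\infty$, the translation part $v_0$ is sent to infinity unless $v_0=0$: the conjugated isometries that survive in the limit group $H$ are exactly limits of sequences $\rho_{\lambda_i} g_i\rho_{\lambda_i}^{-1}$ for which $\lambda_i|v_{0,i}|$ stays bounded. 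Splitting off the rotational/cross-section part $A,\mathbf g$ (which is unaffected by $\rho_\lambda$) from the translational part, one sees that in the limit the two features decouple: every element of $H$ is either a pure $\R^k$-translation $(v,t,x)\mapsto(v+w,t,x)$ (arising from the $v_0$-directions that were rescaled up to a finite vector) or has the form $(v,t,x)\mapsto(Av,t,\mathbf g(x))$ fixing the vertex, and moreover both of these pieces lie in $H$ separately. That is precisely property (P): $H=T(H)I(H)$, where $T(H)$ is the translation subgroup and $I(H)$ the isotropy at $(0^k,x^*)$, and for each $g\in G$ with $g(v,t,x)=(Av+v_0,t,\mathbf g(x))$ the elements $g_1,g_2$ land in $H$. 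I would make the "both pieces lie in $H$" claim precise by choosing, for a given $g\in G$, the approximating sequence $\rho_{\lambda_i} g^{m_i}\rho_{\lambda_i}^{-1}$ with suitable powers $m_i$ (or, better, using that $T(G)$ closed-ness forces closure of the one-parameter or discrete translation subgroup generated by $v_0$, so both the rotational shadow and a continuum of translations appear in the closed limit group $H$) — this bookkeeping is where the technical care is needed, and it is essentially the argument already used in \cite{Pan19}.

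For the last sentence, $I(H)\cong I(G)$: both $I(G)$ and $I(H)$ are closed subgroups of $\Isom(X)$ (equivalently of $\Isom(CS(\R^k\times C(X)))$), since an isometry fixing the vertex of a cone is determined by its action on the cross section. By the previous paragraph, $I(H)$ is the closure of the set of "rotational shadows" $\{(v,t,x)\mapsto(Av,t,\mathbf g(x)) : g\in G\}$; but the map $g\mapsto(A,\mathbf g)$ is already a continuous homomorphism $G\to O(k)\times\Isom(X)$ whose image is closed (it is the image of the closed group $G$ under the proper projection killing the translation part, using that $G$ acts on $\R^k$ by a cocompact-on-orbits affine action — here I would cite the structure of $G$ from Cheeger–Colding/Colding–Naber plus the Euclidean volume growth hypothesis), and this image is exactly $I(G)$ after identifying isometries fixing $(0^k,x^*)$ with their cross-section action. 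Hence $I(H)$ and $I(G)$ are the same subgroup of $O(k)\times\Isom(X)$, in particular isomorphic as Lie groups.

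The main obstacle I anticipate is the bookkeeping in the second paragraph: carefully justifying that when we pass to the tangent-cone limit, the translational and rotational parts of each $g\in G$ genuinely \emph{separate} into two elements of $H$ — i.e. establishing property (P) rather than merely "$H$ is a subgroup of $\Isom(\R^k)\times\Isom(X)$" — which requires controlling sequences $\rho_{\lambda_i}g_i\rho_{\lambda_i}^{-1}$ and using closedness of $G$ (and of $T(G)$) to produce the needed limit elements. This is exactly the point treated in \cite{Pan19}, and I would follow that template, adapting notation to the present $\R^k\times C(X)$ setting.
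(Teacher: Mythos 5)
Your setup is correct and matches the paper's: identifying the tangent cone at the vertex with $\R^k\times C(X)$ itself, and computing that conjugation by the rescaling sends $g(v,t,x)=(Av+v_0,t,\mathbf{g}(x))$ to $(Av+\lambda v_0,t,\mathbf{g}(x))$. But at both crucial steps your argument has genuine gaps. For property (P), the mechanisms you propose do not work: powers $g^{m_i}$ of a single fixed $g\in G$ cannot produce the decomposition of an arbitrary $h\in H$, and "closedness of $T(G)$" is beside the point because $G$ itself need not satisfy (P) — $T(G)$ may be trivial while every nontrivial element of $G$ has a nonzero translation part. The actual mechanism (the paper's) is: given $h\in H$ with $h(v,t,x)=(Av+v_0,t,\mathbf{h}(x))$, take $g_i\in G$ with $r_ig_i\to h$; writing $g_i(v,t,x)=(A_iv+v_i,t,\mathbf{g}_i(x))$, the convergence forces $A_i\to A$, $r_iv_i\to v_0$ (hence $v_i\to 0$) and $\mathbf{g}_i\to\mathbf{h}$, so the \emph{unrescaled} $g_i$ converge to $g_\infty(v,t,x)=(Av,t,\mathbf{h}(x))$, which lies in $I(G)$ because $G$ is closed; then $r_ig_\infty=h_1$ (vertex-fixing isometries are invariant under the rescaling conjugation) and $r_i(g_ig_\infty^{-1})\to h_2$, so $h_1,h_2\in H$ and $h=h_2h_1$. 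This application of closedness of $G$ to the approximating sequence is exactly what your sketch defers to \cite{Pan19} without supplying.

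For $I(H)\cong I(G)$, your route rests on claims that are false in general: the image of $G$ under the projection killing the translation part need not be closed, need not equal $I(G)$, and $I(H)$ need not be the closure of the set of "rotational shadows" of $G$ (only the inclusion $I(H)\subseteq\overline{\{\text{shadows}\}}$ holds). Concretely, take $G=\langle g\rangle$ with $g(v,t,x)=(v+1,t,\rho(x))$, where $\rho$ is an irrational rotation of $X=S^1_r$: this is a closed subgroup with $I(G)=\{e\}$, and the tangent-cone limit group is $H=I(H)=\{e\}$, while the shadow image $\{\rho^m\}$ is a dense, non-closed subgroup whose closure is the whole rotation group; also the projection killing translations is not proper. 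So your identification fails at both ends, even though the conclusion is true. The correct argument is a byproduct of the (P)-argument above: when $v_0=0$ one has $g_\infty=h$ as isometries, so $I(H)\subseteq I(G)$ by closedness of $G$; conversely every $g\in I(G)$ satisfies $r_ig=g$ and hence survives into $H$, giving $I(G)\subseteq I(H)$; therefore $I(H)=I(G)$.
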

\begin{proof}
	By assumption, there exists a sequence $r_i \to \infty$ such that
	\begin{equation}\label{TangentCone}
		(r_i(\R^k \times C(X)), (0^k, x^*), G) \GH (\R^k \times C(X), (0^k, x^*), H).
	\end{equation}

	Note that the standard retraction map, $\varphi_i:\R^k\times C(X)\to r_i(\R^k\times C(X))$ defined by $\varphi_i(v,t,x):=(r_i^{-1}v,r_i^{-1}t,x)$, is an isometry.
Therefore according to $\varphi_i$, $(r_i(\R^k\times C(X)),(0^k,x^*),G)$ is equivariantly isometric to  $(\R^k\times C(X),(0^k,x^*),r_iG)$, where the $r_iG$-action is defined to be: given any $g\in G$, for any $(v,t,x)\in \R^k\times C(X)$, $$(r_ig)(v,t,x):=(\varphi_i^{-1}\circ g\circ \varphi_i)(v,t,x)=r_i(g(r_i^{-1}v,r_i^{-1}t,x)),$$
where $r_i(v,t,x):=(r_iv,r_it,x)$.
Hence (\ref{TangentCone}) becomes,
	\begin{equation}\label{TangentConeConvergence}
		(\R^k\times C(X),(0^k,x^*),r_iG)\GH (\R^k\times C(X),(0^k,x^*),H).
	\end{equation}

	Fix an $h\in H$ with $h(v,t,x)=(Av+v_0,t,\mathbf{h}(x))$. Then there exists a sequence of $g_i\in G$, such that $r_ig_i\GH h$ with respect to (\ref{TangentConeConvergence}). For each $i$, let $g_i(v,t,x)=(A_iv+v_i,t,\mathbf{g}_i(x))$. So $(r_ig_i)(v,t,x)\GH h(v,t,x)$ implies $(A_iv+r_iv_i,t,\mathbf{g}_i(x))\to (Av+v_0,t,\mathbf{h}(x))$. This implies that $A_i\to A,r_iv_i\to v_0$ and $\mathbf{g}_i\to\mathbf{h}$. Specially, $v_i\to0$, and $g_i\to g_\infty$ in $\Isom (\R^k\times C(X))$, where $g_\infty(v,t,x):=(A v,t,\mathbf{h}(x))$. Since $G$ is closed, $g_\infty\in I(G)$.
We conclude that
$$(\R^k\times C(X),(0^k,x^*),r_ig_\infty,r_ig_ig_\infty^{-1})\GH(\R^k\times C(X),(0^k,x^{*}),h_1,h_2),$$ where $$h_1(v,t,x)=(A v,t,\mathbf{h}(x))$$ and $$h_2(v,t,x)=\lim_{i}(r_ig_ig_\infty^{-1})(v,t,x)=\lim_i(A_iA^{-1}v+r_iv_i,t,\mathbf{g}_i\mathbf{h}^{-1}(x))=(v+v_0,t,x).$$
Hence $h=h_2h_1$ with $h_1,h_2\in H$. This completes the proof of the first required conclusion. Note that when $v_0=0$, the above discussion defines an embedding $I(H)\to I(G)$ defined by $h=h_1\mapsto g_\infty$. This map is obvious surjective.
This proves the second conclusion.
\end{proof}

\begin{lem}\label{ConvergenceofCrossections}
	Suppose that $(X_i,x_i^*,G_i)\GH(X,x,G)$ is a convergent sequence in $\Omega(M,\Gamma)$. If every $G_i$ satisfies property (P), then $(CS(X_i),I(G_i))\GH(CS(X),I(G))$.
\end{lem}
\begin{proof}
	It is easy to see $(CS(X_i),I(G_i))\GH (CS(X), H)$ for some closed $H\subset I(G)$.
Hence we only need to prove $I(G)\subset H$.
For any $h_\infty\in I(G)$, take $h_i\in G_i$ such that $(X_i,x_i^*,h_i)\GH(X,x,h_\infty)$. Specially, there exists a sequence of $\epsilon_i\to0$ such that
	\begin{equation}\label{AlmostIsotropy}
		d(h_i(x_i^*),x_i^*)\le\epsilon_i.
	\end{equation}
	For each $i$, let $X_i=\R^{k_i}\times C(Z_i)$ where $\diam(Z_i)<\pi$ and $h_i(v,t,z)=(A_iv+v_i,t,\mathbf{h}_i(z))$. Since $G_i$ satisfies property (P), $h_{1,i}(v,t,z):=(A_iv,t,\mathbf{h}_i(z))$ and $h_{2,i}(v,t,z):=(v+v_i,t,z)$ are both in $G_i$. By (\ref{AlmostIsotropy}), $|v_i|\le\epsilon_i$.
Hence we have $$(X_i,x_i^*,h_{1,i})\GH(X,x,h_\infty).$$
	Note that $h_{1,i}\in I(G_i)$, so $$(CS(X_i),h_{1,i})\GH(CS(X),h_\infty),$$which implies $h_\infty\in H$.
This completes the proof.
\end{proof}

\begin{proof}[Proof of Lemma \ref{StabilityOfIsotropy}]

	We define $K$ as below. Choose $(Y,y^*,S)\in \Omega(M,\Gamma)$ such that
	\begin{enumerate}
		\item For any $(X,x^*,G)\in \Omega(M,\Gamma)$, $\dim(I(S))\le\dim(I(G))$,
		
		\item For any $(X,x^*,G)\in \Omega(M,\Gamma)$, if $\dim(I(S))=\dim(I(G))$, then $\#(I(S)/I(S)_0)\le \#(I(G)/I(G)_0)$, where $\#(I(G)/I(G)_0)$ stands for the number of connected components of $I(G)$.
	\end{enumerate}
Such $(Y,y^*,S)$ always exists because $\dim(I(G))$ and $\#(I(G)/I(G)_0)$ are nonnegative integers. Now define $K:=I(S)$. Note that by assumptions, we have $\dim K=0$ or $1$.
$K$ satisfies the following gap property.

	\begin{slem}\label{GapLemma}
		Under the assumption of Lemma \ref{StabilityOfIsotropy}, there exists an $\eta>0$ depending on $K$, to the following effect. For any $(X_j,x^*_j,G_j)\in\Omega(M,\Gamma)$, $j=1,2$, if $G_1$ satisfies property (P), and $I(G_1)$ is isomorphic to $K$ and $d_{GH}((X_1,x^*_1,G_1),(X_2,x^*_2,G_2))\le\eta$, then $I(G_2)$ is isomorphic to $K$.
	\end{slem}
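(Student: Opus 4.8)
\textbf{Proof plan for Sublemma \ref{GapLemma}.}

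The plan is to argue by contradiction, exploiting a compactness-and-rigidity mechanism together with the dimensional constraint $\dim(I(G))\le 1$ from the hypothesis of Lemma \ref{StabilityOfIsotropy}. Suppose no such $\eta$ exists. Then we obtain sequences $(X_1^{(m)},x_1^{(m)*},G_1^{(m)})$ and $(X_2^{(m)},x_2^{(m)*},G_2^{(m)})$ in $\Omega(M,\Gamma)$ with $d_{GH}$-distance tending to $0$, where every $G_1^{(m)}$ satisfies property (P) and $I(G_1^{(m)})\cong K$, but $I(G_2^{(m)})\not\cong K$. Since $\Omega(M,\Gamma)$ is precompact in the equivariant pointed Gromov-Hausdorff topology (all the spaces are asymptotic cones of the fixed $M$ with Euclidean volume growth, hence metric cones of the form $\R^{k}\times C(X)$ with $\diam(X)\le\pi$ and definite volume lower bounds), after passing to a subsequence both sequences converge to a common limit $(X_\infty,x_\infty^*,G_\infty)\in\Omega(M,\Gamma)$. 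The key structural point is that property (P) is preserved under such limits in a strong enough sense: I would first verify (mimicking the argument of Lemma \ref{ReduceToTangentCone}, or invoking that a subsequential limit of groups all satisfying (P) again splits as $T(G_\infty)I(G_\infty)$) that $G_\infty$ satisfies property (P), and then apply Lemma \ref{ConvergenceofCrossections} to the first sequence to conclude $(CS(X_1^{(m)}),I(G_1^{(m)}))\GH(CS(X_\infty),I(G_\infty))$, whence $I(G_\infty)$ is a limit of copies of $K$.

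Next I would pin down $I(G_\infty)$. Because $K$ is a fixed compact Lie group of dimension $0$ or $1$ chosen to minimize first $\dim(I(\cdot))$ and then the number of components over all of $\Omega(M,\Gamma)$, and because dimension and component count can only drop in an equivariant Gromov-Hausdorff limit of isometry groups acting on a convergent sequence of compact $\RCD$ cross sections (semicontinuity of $\dim$ and the fact that nearby compact Lie groups have at least as many components — this is where the Lie-group structure from Colding-Naber \cite{CN12} and the non-collapsing of $CS(X)$ as a non-collapsed $\RCD(n-2,n-1)$-space from \cite{K15,DePGig18} enter), we get $\dim(I(G_\infty))\le\dim K$ and, by minimality of $K$ plus the constraint, actually $\dim(I(G_\infty))=\dim K$ and $\#(I(G_\infty)/I(G_\infty)_0)\le\#(K/K_0)$; combined with $I(G_\infty)$ being a limit of $K$'s, a short argument (a $0$- or $1$-dimensional compact Lie group that is both a sub-object-type limit of $K$ and no larger than $K$ in these invariants must be isomorphic to $K$) forces $I(G_\infty)\cong K$. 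This step is essentially the rigidity at the minimizer and should go through cleanly once semicontinuity is set up.

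Finally I would derive the contradiction from the second sequence. Applying Lemma \ref{ReduceToTangentCone} is not directly available here since the $G_2^{(m)}$ need not satisfy (P); instead I would pass to equivariant tangent cones at the reference points: for each $m$ choose an equivariant tangent cone $(X_2^{(m)\prime},\cdot,H_2^{(m)})$ of $(X_2^{(m)},x_2^{(m)*},G_2^{(m)})$, which by Lemma \ref{ReduceToTangentCone} satisfies property (P) and has $I(H_2^{(m)})\cong I(G_2^{(m)})$; likewise replace the limit by its tangent cone. A diagonal argument gives a limit $(X_\infty',x_\infty^{*\prime},H_\infty)$ with $I(H_\infty)\cong I(G_\infty)\cong K$ (using Lemma \ref{ConvergenceofCrossections} again, now legitimately since all groups satisfy (P)), while simultaneously $(CS(X_2^{(m)\prime}),I(H_2^{(m)}))\GH(CS(X_\infty'),I(H_\infty))$ with $I(H_2^{(m)})\not\cong K$. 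But the conclusion already reached for the minimizer shows $I(H_\infty)\cong K$, and then the semicontinuity/rigidity at $K$ applied once more forces $I(H_2^{(m)})\cong K$ for large $m$ — contradiction. The main obstacle I anticipate is making the upper semicontinuity statement for isotropy groups precise and correctly oriented: one must show that in an equivariant GH limit of isometry groups of non-collapsed cross sections, the isotropy can only "degenerate" (lose dimension or components), never spontaneously gain them, and that at the chosen minimizer $K$ this degeneration is rigid. This requires the full force of the Lie-group structure theory for limit isometry groups and the non-collapsed $\RCD$ compactness; the rest of the argument is a fairly standard contradiction-compactness package built on Lemmas \ref{ReduceToTangentCone} and \ref{ConvergenceofCrossections}.
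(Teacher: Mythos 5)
Your overall strategy (contradiction, pass both sequences to a common limit, use Lemma \ref{ConvergenceofCrossections} on the property-(P) sequence to pin down the limit isotropy, then force $I(G_2^{(m)})\cong K$ by minimality of $K$) is the same as the paper's, but two points need attention, one of which is a genuine gap. The gap is your treatment of the second sequence. You perceive an obstacle that is not there: to handle $(X_2^{(m)},x_2^{(m)*},G_2^{(m)})$ one does not need the full strength of Lemma \ref{ConvergenceofCrossections} (hence no property (P)); one only needs its ``easy half'', namely that $(CS(X_2^{(m)}),I(G_2^{(m)}))$ subconverges to $(CS(X_\infty),H)$ with $H$ a \emph{closed subgroup} of $I(G_\infty)$ — limits of isotropies at the vertex are isotropies, no (P) required. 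Then \cite{MRW08} and \cite[Theorem 0.8]{PR18} give an injective homomorphism $I(G_2^{(m)})\to H\le I(G_\infty)\cong K$, and minimality of $K$ (first in dimension, then in number of components, over all of $\Omega(M,\Gamma)$) forces $I(G_2^{(m)})\cong K$, the desired contradiction. Your substitute — taking an equivariant tangent cone of each $(X_2^{(m)},G_2^{(m)})$ and claiming a diagonal argument yields a limit with isotropy $\cong I(G_\infty)\cong K$ — does not work as stated: the blow-up scales producing the tangent cones $H_2^{(m)}$ are unrelated to the rate of convergence $X_2^{(m)}\to X_\infty$, so the limit of these tangent cones need not be (equivariantly) a tangent cone of $(X_\infty,G_\infty)$; and if you instead choose controlled rescalings so that the limit is such a tangent cone, the rescaled terms are no longer tangent cones of the $X_2^{(m)}$, so Lemma \ref{ReduceToTangentCone} cannot be invoked to give them property (P) or isotropy $\cong I(G_2^{(m)})$. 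Identifying $I(H_\infty)$ with $K$ in your scheme is thus exactly as unresolved as the original problem.

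Secondly, the mechanism you invoke to pin down $I(G_\infty)$ from the first sequence — ``dimension and component count can only drop in an equivariant GH limit'' — is stated in the wrong direction: in such limits the terms inject into the limit group (via \cite{MRW08} plus \cite{PR18}), so the limit can only be \emph{larger}; finite groups can very well limit onto a circle. The correct caps on $I(G_\infty)$ come from elsewhere: if $\dim K=0$, all $I(G_1^{(m)})\cong K$ have elements of order dividing the exponent of $K$, so the limit group is finite and the $\epsilon_m$-dense injective image is eventually all of it; if $\dim K=1$, the standing hypothesis of Lemma \ref{StabilityOfIsotropy} ($\max\dim I\le 1$) caps $\dim I(G_\infty)$ at $1$, and then a closed $1$-dimensional, $\epsilon_m$-dense image of $K$ must equal $I(G_\infty)$. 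With these corrections your first-sequence step matches the paper; with the second-sequence detour removed in favor of the closed-subgroup containment, the whole argument becomes the paper's proof.
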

	\begin{proof}[Proof of Sublemma \ref{GapLemma}]
		
		Argue by contradiction. Assume that there exists a sequence of $(X_{j,i},x^*_{j,i},G_{j,i})\in\Omega(M,\Gamma)$, $j=1,2$, such that every $G_{1,i}$ satisfies property (P), and $I(G_{1,i})$ is isomorphic to $K$ and $$d_{GH}((X_{1,i},x^*_{1,i},G_{1,i}),(X_{2,i},x^*_{2,i},G_{2,i}))\le\delta_i\to0,$$ while every $I(G_{2,i})$ is not isomorphic to $K$.

		Up to a subsequence, we have $$(X_{j,i},x_{j,i}^*,G_{j,i})\GH(X,x^*,G),$$for $j=1,2$.
		
		For $j=1$, according to Lemma \ref{ConvergenceofCrossections}, we have $$(CS(X_{1,i}),I(G_{1,i}))\GH(CS(X),I(G)).$$
		
Note that $CS(X_{1,i})$ are all non-collapsed $\mathrm{RCD}(n-2,n-1)$-spaces, and the above convergence is non-collapsed. By \cite[Lemma 3.2]{MRW08} (see also \cite[Corollary 1.10]{Pan18}) and \cite[Theorem 0.8]{PR18}, we conclude that for every large $i$, there exists an injective Lie group homomorphism $\phi_i:I(G_{1,i})\to I(G)$, which is an $\epsilon_i$-equivariant Gromov-Hausdorff approximation, where $\epsilon_i\rightarrow 0$.
So under the standard metric induced by $I(G)$-action, $\phi_i(I(G_{1,i}))$ is an $\epsilon_i$-dense subgroup in $I(G)$.
		
We claim that $I(G)$ is isomorphic to $K$. If $\dim K = 0$, then $I(G_{1,i})$ are all discrete and have uniformly bounded orders. Thus $I(G)$ is discrete. So for large $i$, $\phi_i$ is an isomorphism. If $\dim K = 1$, by our assumption, $\dim(I(G))=\dim(\phi_i(I(G_{1,i})))=1$. Combining the fact that $\phi_i(I(G_{1,i}))$ is $\epsilon_i$-dense in $I(G)$, for large $i$, $I(G)=\phi_i(I(G_{1,i}))$. Hence in both cases, $I(G)$ is isomorphic to $K$.

For $j=2$, we assume
		
		$$(CS(X_{2,i}),I(G_{2,i}))\GH(CS(X),H),$$ where $H$ is a closed subgroup of $I(G)$. And again by \cite{MRW08} and \cite{PR18}, for large $i$, there exists an injective Lie group homomorphism $\psi_i:I(G_{2,i})\to H$ which is an $\epsilon_i$-equivariant Gromov-Hausdorff approximation. This implies that $I(G_{2,i})$ is isomorphic to a subgroup of $K$. By the choice of $K$, $I(G_{2,i})$ is isomorphic to $K$, which is a contradiction.
	\end{proof}
	
	We continue the proof of Lemma \ref{StabilityOfIsotropy}. We argue by contradiction. Suppose that there exists $(X,x^*,G)\in \Omega(M,\Gamma)$ such that $I(G)$ is not isomorphic to $K$.
 Let $(Y,y^*,S)$ be defined as at the beginning of the proof.
 By Lemma \ref{ReduceToTangentCone}, up to a scaling to the tangent cone, we may further assume $(Y,y^*,S)$ satisfies property (P). The subsequent discussion is a critical scaling argument originally developed by Pan.
	
	We choose $r_i$, $s_i$ satisfying the following.
	\begin{enumerate}
		\item $r_i\to\infty$, $s_i\to\infty$, $r_i^{-1}s_i\to\infty$,
		\item $(r_i^{-1}M,p,\Gamma)\GH(X,x^*,G)$,
		\item $(s_i^{-1}M,p,\Gamma)\GH(Y,y^*,S)$.
	\end{enumerate}
	
	Let $\Lambda_i:=r_i^{-1}s_i$, $(N_i,p_i):=(s_i^{-1}M,p)$. Then the above properties become
	\begin{enumerate}
		\item  $\Lambda_i\to\infty$,
		\item $(\Lambda_iN_i,p_i,\Gamma)\GH(X,x^*,G)$,
		\item $(N_i,p_i,\Gamma)\GH(Y,y^*,S)$.
	\end{enumerate}
	
	Define
	\begin{align*}
		A_i:=&\{l\in [1,\Lambda_i]|d_{GH}((l N_i,p_i,\Gamma),(Z,z^*,H))\le0.1\eta,\\&\text{for some }(Z,z^*,H)\in\Omega(M,\Gamma)\text{ satisfying property (P) and }I(H)\cong K \},
	\end{align*}
	where $\eta$ is given in Sublemma \ref{GapLemma}.
	
	Note that for all large $i$, $1\in A_i$, and $\Lambda_i\notin A_i$. To see the latter one, if $\Lambda_i\in A_i$, then $$d_{GH}((X,x^*,G),(Z,z^*,H))\le0.2\eta,$$  for some $(Z,z^*,H)\in\Omega(M,\Gamma)$ satisfying property (P) and $I(H)\cong K$. Applying Sublemma \ref{GapLemma}, we conclude that $I(G)$ is isomorphic to $K$, which contradicts the assumption on $(X,x^*,G)$.
	
	Let $\lambda_i\in A_i$ such that $\sup A_i-i^{-1}< \lambda_i\le \sup A_i$.
	
	We claim that $\Lambda_i\lambda_i^{-1}\to\infty$. If not, up to a subsequence, we may assume $\Lambda_i\lambda_i^{-1}\to C\in[1,+\infty)$, then
	
	$$(\lambda_iN_i,p_i,\Gamma)\GH(C^{-1}X,x^*,G).$$
	Then by the definition of $\lambda_i$ and $A_i$, $$     d_{GH}((C^{-1}X,x^*,G),(Z,z^*,H))\le0.2\eta,$$ for some $(Z,z^*,H)\in\Omega(M,\Gamma)$ satisfying property (P) and $I(H)\cong K$. Applying Sublemma \ref{GapLemma}, we conclude that $I(G)$ is isomorphic to $K$, contradicting to the assumption on $(X,x^*,G)$. So the claim follows.

	Up to a subsequence, we assume
	$$(\lambda_iN_i,p_i,\Gamma)\GH(W,w^*,L).$$
 Then
 $$     d_{GH}((W,w^*,L),(Z,z^*,H))\le0.2\eta,$$ for some $(Z,z^*,H)\in\Omega(M,\Gamma)$ satisfying property (P) and $I(H)\cong K$.
 Applying Sublemma \ref{GapLemma}, we conclude that $I(L)$ is isomorphic to $K$. So by Lemma \ref{ReduceToTangentCone}, the equivariant tangent cone, $(T_{w^*}W,w',L')$, of $(W,w^*,L)$ satisfies property (P) and $I(L')\cong K$. By a standard diagonal argument, after passing to a subsequence of $i$, we may choose $R_i$ satisfying the following:
	\begin{enumerate}
		\item $R_i\to\infty$, $R_i\lambda_i<\Lambda_i$,
		\item $(R_i\lambda_iN_i,p_i,\Gamma)\GH(T_{w^*}W,w',L')$.
	\end{enumerate}
Hence for large $i$, $R_i\lambda_i\in A_i$. This implies $R_i\sup A_i-R_i\epsilon_i< R_i\lambda_i\le\sup A_i$, where $\epsilon_i$ is a subsequence of $i^{-1}$. So $\sup A_i\le \frac{R_i}{R_i-1}\epsilon_i\to0$, contradicting $\sup A_i\ge 1$.
\end{proof}

\section{Proof of Theorem \ref{thm-finite generation}: the Finite Generation}\label{section5}

In this section, we focus on proving Theorem \ref{thm-finite generation}. Since the proof of Theorem \ref{thm-finite generation'} follows the same reasoning as that of Theorem \ref{thm-finite generation}, we will only present the proof of Theorem \ref{thm-finite generation} for the sake of brevity. Our main effort will be directed toward proving the following theorem.

\begin{thm}\label{OrbitisConnected}
	Let $M$ be as in Theorem \ref{thm-finite generation}. If $M$ is not flat, and $\Gamma:=\pi_1(M)$ is infinite and abelian, then there exists an integer $l\in\{1,2\}$, such that for any $(X,x^*,G)\in\Omega(\tilde M,\Gamma)$, the orbit $G(x^*)$ is isometric to $\R^l$. 
\end{thm}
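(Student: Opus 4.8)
The plan is to analyze all elements of $\Omega(\tilde M,\Gamma)$ and show that, under the stated hypotheses, the orbit $G(x^*)$ is always a Euclidean subspace of fixed dimension. First I would record what the ambient hypotheses give us. Since $\tilde M$ has Euclidean volume growth, every $(X,x^*,G)\in\Omega(\tilde M,\Gamma)$ has $X$ a metric cone with vertex $x^*$ that contains no lines (after splitting off all line factors we may always assume this normalization, which we do throughout). Since $\Gamma$ is abelian, $G$ is abelian. Because $\Gamma=\pi_1(M)$ is infinite and $M$ is not flat, the universal cover $\tilde M$ is non-flat; by the Cheeger--Gromoll splitting theorem $\tilde M$ itself contains no line (a line would force $\tilde M=\R\times N$ and eventually a flat splitting incompatible with non-flatness plus Euclidean volume growth of $\tilde M$ combined with finiteness considerations), and also $\Gamma$ acts with unbounded orbits (otherwise $\pi_1(M)$ would be finite by a standard argument).

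The core dichotomy I would exploit is the simply-connectedness lemma, Theorem~\ref{SimplyConnectnessOf(n-3)symmetry} in dimension $n=4$: if some asymptotic cone of $M$ splits an $\R^{n-3}=\R$-factor then $M$ is simply connected, contradicting $\Gamma$ infinite. Hence, \emph{no} asymptotic cone of $M$ splits an $\R$-factor. Next I would pass to $\tilde M$ and its equivariant cones. For $(X,x^*,G)\in\Omega(\tilde M,\Gamma)$, write the orbit closure structure: since $X$ is a non-flat cone with no lines, every isometry of $X$ fixes the vertex $x^*$, so $G$ fixes $x^*$, and consequently $G(x^*)=\{x^*\}$ is a point --- but this would make $G(x^*)$ $0$-dimensional, contradicting what we want. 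The resolution is that the correct object to track is the equivariant cone of $\tilde M$ \emph{before} removing the Euclidean factor coming from $\Gamma$: one has $(r_i^{-1}\tilde M,\tilde p,\Gamma)\to(\R^k\times X',(0^k,x'),G)$ where $G$ acts on $\R^k$ by a cocompact (crystallographic-type) action because $\Gamma$ is abelian and finitely generated (by Theorem~\ref{thm-finite generation}, whose proof we are inside of — here one must be careful about circularity; in fact the argument should run with Gromov's short generators and the critical-rescaling machinery so that finite generation is a conclusion, not a hypothesis). The orbit $G(0^k,x')$ then projects to a co-compact orbit in $\R^k$, hence $G((0^k,x'))$ contains (and in the abelian case equals, up to the fixed cone factor) a translated copy of $\R^l$ where $l=k$ is the number of independent line directions generated by $\Gamma$. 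Using Lemma~\ref{Lem17:28}, each non-trivial torsion-free direction forces an $\R$-summand, and abelianness forces these to commute and assemble into a single $\R^l$ orbit.

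The dimension count $l\in\{1,2\}$ is then forced as follows. If $l\ge 3$ then every asymptotic cone of $M$ splits an $\R^{n-3}$-factor (the $\R^l$ quotient descends to $\R^l\subset$ cone of $M$, and $l\ge 3=n-1$ would even give simple connectedness directly, while $l=n-1$ or $n-2$ cases are handled by the same simply-connectedness lemma applied after noting $4-3=1\le l$), contradicting $\Gamma$ infinite via Theorem~\ref{SimplyConnectnessOf(n-3)symmetry}. More precisely: $l\ge n-3=1$ would already contradict non-triviality of $\Gamma$ \emph{unless} the splitting is not an honest asymptotic-cone splitting of $M$ but only of $\tilde M$; so I must distinguish the $\Gamma$-equivariant splitting of $\tilde M$ from the splitting of the quotient $M$. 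The clean statement is that $M$'s asymptotic cone is $(\R^l\times X')/G_0$ where $G_0$ is the residual non-translational part, and this splits an $\R^{l-\mathrm{rank}(\text{rotational part})}$-factor; pushing $l$ up to $3$ makes this at least $\R^1$, contradiction. Thus $l\le 2$, and $l\ge 1$ since $\Gamma(\tilde p)$ is unbounded. Finally, the \emph{uniformity} of $l$ across all $(X,x^*,G)\in\Omega(\tilde M,\Gamma)$ --- the same $l$ works for every equivariant cone --- is where Lemma~\ref{StabilityOfIsotropy} (the stability of isotropy groups) enters: one shows $\dim I(G)\le 1$ for all equivariant cones (the isotropy acts on a $2$-sphere cross-section coming from the $n=4$, codimension-$2$ structure, with at most circle symmetry), obtains the uniform compact group $K$, and deduces that $\dim G(x^*) = \dim G - \dim I(G)$ together with $\dim G$ constant (again by a rescaling/monotonicity argument) gives constant $l$.

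\textbf{Main obstacle.} The hardest point is the last one: proving that the dimension $l$ of the orbit is the \emph{same} for every equivariant asymptotic cone, not merely bounded by $2$. A priori, different rescalings $r_i\to\infty$ could produce cones where $\Gamma$'s orbit "opens up" to different dimensions (this is exactly the phenomenon behind non-finite-generation in high dimensions). Ruling this out requires the critical-rescaling argument of Section~\ref{section4} combined with Lemma~\ref{StabilityOfIsotropy}: if at some scale the orbit has dimension $l$ and at another scale dimension $l'<l$, a continuity/connectedness argument on the set of rescaling parameters produces an intermediate cone whose isotropy jumps, contradicting the stability lemma (which pins down $I(G)\cong K$ uniformly). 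Executing this cleanly --- in particular identifying the right monotone quantity (likely $\dim G - \dim I(G)$, or equivalently the orbit dimension) and verifying it cannot decrease under further blow-up --- is the crux; everything else is an application of Theorem~\ref{SimplyConnectnessOf(n-3)symmetry}, Lemma~\ref{Lem17:28}, and the structure theory of abelian limit groups acting on non-flat cones with no lines.
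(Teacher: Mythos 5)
There is a genuine gap, and it sits at the center of your argument. Your ``core dichotomy'' applies Theorem \ref{SimplyConnectnessOf(n-3)symmetry} to $M$ itself to conclude that no asymptotic cone of $M$ splits an $\R$-factor (and later to force $l\le 2$). But that theorem requires the manifold in question to have Euclidean volume growth, and in the setting of Theorem \ref{OrbitisConnected} the group $\Gamma$ is infinite, so by Li/Anderson $M$ does \emph{not} have Euclidean volume growth --- only $\tilde M$ does. The paper never applies the simply-connectedness lemma to $M$; it applies it to quotients $\tilde M/\langle\gamma\rangle$ by \emph{finite} subgroups (which do inherit Euclidean volume growth), precisely to prove that $\Gamma$ is torsion-free (Lemma \ref{Torsion-free}). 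Your proposal omits torsion-freeness altogether, and you cannot skip it: $\Gamma$ is not assumed finitely generated here (the theorem is applied to a non--finitely-generated abelian subgroup in the proof of Theorem \ref{thm-finite generation}), so a priori $\Gamma$ could be an infinite torsion group, in which case Lemma \ref{Lem17:28} (2) gives you no closed $\R$-subgroup and no lower bound $l\ge 1$; note also that Lemma \ref{Lem17:28} (1) itself assumes finite generation. The correct bound on the Euclidean factor is obtained in the paper not from the simply-connectedness lemma but from Cheeger--Colding codimension-$2$ regularity plus Colding's volume rigidity: $k(X)\ge 3$ would force $\tilde M\cong\R^4$, contradicting non-flatness.

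The second gap is that you assume finite generation of $\Gamma$ via Theorem \ref{thm-finite generation} (you flag the circularity but do not resolve it), and you then assert that the orbit is a full $\R^l$ because $G$ acts ``cocompactly'' on the Euclidean factor. That assertion is exactly the content to be proved: the dangerous configurations are those where the projected group $K=P_1(G)$ contains a discrete translation or a reflection-type element --- cases (\ref{itm:b1}) and (\ref{itm:b2}) of Lemma \ref{T2T3} --- giving a disconnected orbit, and nothing in your sketch excludes them. The paper rules them out by the critical-scaling/gap argument propagating case (\ref{itm:a1}) (Lemma \ref{(a1)}, Corollary \ref{(b1)}) and by the isotropy-stability Lemma \ref{StabilityOfIsotropy} applied to kill case (\ref{itm:b2}) (Lemma \ref{(b2)}), after which uniformity of $l$ follows from the connectedness of $\Omega(\tilde M,\Gamma)$, not from a monotone quantity such as $\dim G-\dim I(G)$ as you suggest. (Minor point: your claim that the splitting theorem shows $\tilde M$ contains no line is unjustified and unnecessary; the paper only needs that the cone cross-section factors contain no lines.) As written, the proposal establishes neither $l\ge 1$, nor $l\le 2$, nor the connectedness of the orbit, nor its independence of the rescaling sequence.
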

\begin{proof}[Proof of Theorem \ref{thm-finite generation} assuming Theorem \ref{OrbitisConnected}]
	Without loss of generality, we assume that $M$ is not flat. If $\Gamma$ is not finitely generated, according to Wilking's reduction \cite[Corollary 3.2]{Wil00}, then $\Gamma$ contains an abelian subgroup $\Gamma'$ that is also not finitely generated. Applying Theorem \ref{OrbitisConnected} to $\tilde M/\Gamma'$, it yields that for any $(X,x^*,G)\in\Omega(\tilde M,\Gamma')$, $G(x^*)$ is isometric to $\R^l$. However, by Pan's observation \cite[Lemma 2.5]{Pan20}, since $\Gamma'$ is not finitely generated, then there exists an $(X, x^*, G) \in \Omega(\tilde{M}, \Gamma')$ such that $G(x^*)$ is not connected, which is a contradiction. 
\end{proof}

Now we begin the proof of Theorem \ref{OrbitisConnected}. First, we have the following,

\begin{lem}\label{Torsion-free}
	$\Gamma$ is torsion-free.
\end{lem}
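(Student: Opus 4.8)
The plan is to prove Lemma~\ref{Torsion-free} by contradiction, using the simply-connectedness result Theorem~\ref{SimplyConnectnessOf(n-3)symmetry} (equivalently Lemma~\ref{KeyLemma}) as the central tool. Suppose $\Gamma = \pi_1(M)$ contains a nontrivial torsion element. Since $\Gamma$ is abelian (we are in the setting of Theorem~\ref{OrbitisConnected}), the torsion subgroup $\mathrm{Tor}(\Gamma)$ is a subgroup of $\Gamma$, and it is finite: indeed $\Gamma$ acts on $\tilde M$ which has Euclidean volume growth, so $\Gamma$ has polynomial growth of the same degree as the volume growth, hence is virtually $\mathbb{Z}^m$ for some $m$, and in an abelian group of this type the torsion subgroup is finite. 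Let $F := \mathrm{Tor}(\Gamma)$, assumed nontrivial, and consider the intermediate covering $\hat M := \tilde M / (\Gamma/F)$ — more precisely, since $F$ is normal (everything is abelian) we can form $N := \tilde M / \Gamma'$ where $\Gamma' \le \Gamma$ is chosen so that $\pi_1$ of the relevant cover is exactly $F$. Concretely: let $N$ be the covering space of $M$ corresponding to the subgroup $\Gamma/F$ — no, rather, let $N := \tilde M / K$ where $K \cong \Gamma / F$ is a torsion-free abelian group, so that $N$ is a manifold with nonnegative Ricci curvature, Euclidean volume growth (its universal cover is $\tilde M$), and $\pi_1(N) \cong F$ is a nontrivial finite group. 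The goal is then to derive a contradiction by showing $N$ must be simply connected.

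To apply Theorem~\ref{SimplyConnectnessOf(n-3)symmetry} to $N$ (with $n=4$, so we need an asymptotic cone splitting an $\R^{1}$-factor), I would argue as follows. The group $K \cong \Gamma/F$ is infinite (since $\Gamma$ is infinite and $F$ is finite) and finitely generated abelian, hence $K \cong \mathbb{Z}^m$ with $m \ge 1$. Acting on $\tilde M$ by deck transformations of $N$, the orbit $K(\tilde p)$ is unbounded, so by Lemma~\ref{Lem17:28}(1), for any equivariant asymptotic cone $(Y, y^*, \bar G) \in \Omega(\tilde M, K)$ the orbit $\bar G(y^*)$ is unbounded and nondiscrete; since $\tilde M$ has Euclidean volume growth, $Y$ splits an $\R$-factor. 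Now the corresponding asymptotic cone of $N$ is $Y/\bar G$, and because the $\R$-factor of $Y$ is acted on — we need to check that the splitting descends. This is where I would invoke the submetry/line-lifting argument exactly as in the last paragraph of the proof of Lemma~\ref{splitting-lem}: the asymptotic cone of $N$ receives a submetry from an asymptotic cone $\bar X$ of $\tilde M$, and lines lift through submetries, so the asymptotic cone of $N$ splits an $\R$-factor. Thus $N$ is an open $4$-manifold with nonnegative Ricci curvature, Euclidean volume growth, and an asymptotic cone splitting an $\R^{4-3}=\R^1$ factor, so Theorem~\ref{SimplyConnectnessOf(n-3)symmetry} forces $\pi_1(N) = \{e\}$, contradicting $\pi_1(N) \cong F \ne \{e\}$.

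The main obstacle I anticipate is making the descent of the $\R$-splitting from $\tilde M$ (or its asymptotic cone) down to $N$ fully rigorous: one must be careful that the equivariant asymptotic cone of $(\tilde M, K)$ actually gives rise to an asymptotic cone of $N$ whose splitting is genuine, rather than merely that $N$'s asymptotic cone is a quotient of something that splits. The clean way is the submetry argument: pass to a subsequence so that $(r_i^{-1}\tilde M, \tilde p) \to \bar X$ and $(r_i^{-1}N, \bar p) \to Y_N$ simultaneously with a limiting submetry $\bar X \to Y_N$, use Lemma~\ref{Lem17:28}(1) to get that $\bar X$ splits an $\R$-factor (choosing the $r_i$ along a direction where $K$'s orbit escapes), and then lift a line of that $\R$-factor through the submetry to conclude $Y_N$ splits an $\R$-factor. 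A secondary subtlety is the very first reduction — confirming that $F = \mathrm{Tor}(\Gamma)$ is finite and that $\Gamma/F$ is free abelian of positive rank; this follows from the Cheeger–Gromoll / Milnor–type polynomial growth bound for $\pi_1$ of manifolds with nonnegative Ricci curvature together with the structure theorem for finitely generated abelian groups, but it should be stated carefully since the finiteness of $F$ is what makes $N$ a reasonable intermediate cover. Once these points are in place, the contradiction with Theorem~\ref{SimplyConnectnessOf(n-3)symmetry} is immediate and the lemma follows.
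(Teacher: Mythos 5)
Your overall strategy (argue by contradiction, pass to an intermediate cover with nontrivial finite fundamental group, show one of its asymptotic cones splits an $\R$-factor, and contradict Theorem~\ref{SimplyConnectnessOf(n-3)symmetry}) is the paper's strategy, but as written the proof has a genuine gap: every step of your reduction presupposes that $\Gamma$ is finitely generated. You use polynomial growth to conclude $\Gamma$ is virtually $\Z^m$, that $F=\mathrm{Tor}(\Gamma)$ is finite, and that $\Gamma/F\cong\Z^m$; none of this is available here, because in the setting of Theorem~\ref{OrbitisConnected} the group $\Gamma$ is only assumed infinite and abelian, and the lemma is invoked (via Wilking's reduction in the proof of Theorem~\ref{thm-finite generation}) precisely for an abelian subgroup that is \emph{not} finitely generated. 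A non-finitely-generated abelian group can have infinite torsion subgroup, so your cover need not have Euclidean volume growth and your structural claims fail. The paper avoids this entirely by quotienting by a single torsion element, $N:=\tilde M/\langle\gamma\rangle$ (so the relevant group is automatically finite cyclic), and by splitting into two cases: if $\Gamma$ is finitely generated, a torsion-free element of $\Gamma/\langle\gamma\rangle$ acting on $N$ plus Lemma~\ref{Lem17:28}(2) gives the $\R$-splitting of the asymptotic cones of $N$ directly; if $\Gamma$ is not finitely generated, Lemma~\ref{splitting-lem} (Gromov short basis, Pan's non-connected orbit lemma) produces an asymptotic cone of $M$, hence of the normal cover $N$, splitting an $\R$-factor. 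Your proposal has no argument covering the non-finitely-generated case, which is the case that matters.

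Two further points would need repair even in the finitely generated case. First, the covering bookkeeping is off: with $N:=\tilde M/K$ and $K\le\Gamma$ a torsion-free complement of $F$, one gets $\pi_1(N)=K$, which is torsion-free, not $F$; to contradict simple connectedness you must take $N=\tilde M/F$ (or $\tilde M/\langle\gamma\rangle$), whose deck group over $M$ is $\Gamma/F$. Second, your mechanism for transferring the splitting is backwards: you take an asymptotic cone of $\tilde M$ (the cover), a submetry onto the asymptotic cone of $N$ (the base), and propose to ``lift a line through the submetry'' to make the base split. Lines lift from the base to the total space of a submetry, not the other way, and an $\R$-splitting upstairs does not in general descend through a quotient (the finite limit action could act nontrivially on the Euclidean factor); this is why in Lemma~\ref{splitting-lem} the paper establishes the splitting on the base first and then lifts it to the cover. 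The clean fix, which is what the paper does, is to apply Lemma~\ref{Lem17:28}(2) directly to the action of a torsion-free element of $\Gamma/\langle\gamma\rangle$ on $N$: since $N$ has Euclidean volume growth (the group $\langle\gamma\rangle$ being finite), the conclusion is exactly that the asymptotic cones of $N$ themselves split an $\R$-factor, with no descent argument needed.
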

\begin{proof}
	Suppose on the contrary, there exists a nontrivial torsion element $\gamma\in \Gamma$.
	Then $N:=\tilde{M}/\langle\gamma\rangle$ has Euclidean volume growth and is not simply connected. 
	
	Note that there exists an asymptotic cone of $N$ splitting an $\R$-factor. Indeed, if $\Gamma$ is finitely generated, then $\Gamma/\spa{\gamma}$ contains a non-trivial torsion-free element. According to Lemma \ref{Lem17:28} (2), every asymptotic cone of $N$ splits an $\R$-factor. If $\Gamma$ is not finitely generated, according to Lemma \ref{splitting-lem}, there is an asymptotic cone of $N$ which splits an $\R$-factor.

	Then by Theorem \ref{SimplyConnectnessOf(n-3)symmetry}, $N$ is simply connected, which is a contradiction.
	Hence $\Gamma$ is torsion-free. 
\end{proof}

\begin{lem}
	Let $k(X)$ denote the dimension of the maximal Euclidean factor of $X$. Then, $k(X)\in\{1,2\}$ and $G$ contains a closed $\R$-subgroup.
\end{lem}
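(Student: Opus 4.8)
The plan is to establish the two assertions in turn, the substantive one being the bound $k(X)\le 2$.

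First, for the lower bound and the closed $\R$-subgroup: since $\Gamma$ is infinite and, by Lemma~\ref{Torsion-free}, torsion-free, it contains an element $\gamma$ of infinite order. As the deck group of the universal cover, $\Gamma$ acts freely and properly discontinuously on $\tilde M$, hence is discrete in $\Isom(\tilde M)$; thus $\langle\gamma\rangle\cong\Z$ is a closed subgroup of $\Isom(\tilde M)$. Fix $s_i\to\infty$ with $(s_i^{-1}\tilde M,\tilde p,\Gamma)\GH(X,x^*,G)$. Passing to a subsequence, $(s_i^{-1}\tilde M,\tilde p,\langle\gamma\rangle)\GH(X,x^*,H)$, where $H$ is a closed subgroup of $G$ (closed subgroups pass to closed subgroups under equivariant Gromov--Hausdorff convergence), so $(X,x^*,H)\in\Omega(\tilde M,\langle\gamma\rangle)$. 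By Lemma~\ref{Lem17:28}(2), $H$ contains a closed subgroup $L\cong\R^{l}$ with $l\ge1$ and $L(x^*)$ homeomorphic to $\R^l$; any coordinate line $\R\le L$ is then a closed $\R$-subgroup of $G$. Finally, since $\tilde M$ has Euclidean volume growth, the last assertion of Lemma~\ref{Lem17:28} shows that $X$ splits an $\R$-factor, that is, $k(X)\ge1$.

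Next, for the upper bound: because $\tilde M$ has Euclidean volume growth, $X$ is a non-collapsed, $4$-dimensional metric cone with vertex $x^*$. Write $X=\R^{k(X)}\times X'$ with $X'$ a line-free metric cone; then $X'$ is a non-collapsed $\RCD$ space of dimension $4-k(X)$. If $k(X)=4$ then $X=\R^4$, so by volume continuity and the rigidity case of the Bishop--Gromov inequality $\tilde M$ is isometric to $\R^4$ and $M$ is flat, contrary to hypothesis. If $k(X)=3$ then $X'$ is a $1$-dimensional line-free metric cone; by the classification of $1$-dimensional non-collapsed $\RCD$ spaces this forces $X'=[0,\infty)$, so $X=\R^{3}\times[0,\infty)$. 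But then every point of $\R^{3}\times\{0\}$ has tangent cone the half-space $\R^{3}\times[0,\infty)\not\cong\R^4$, so the singular set of $X$ contains $\R^{3}\times\{0\}$ and has Hausdorff dimension $3$, contradicting the Cheeger--Colding bound $\dim_{\mathcal H}\mathcal S\le n-2=2$ for the singular set of a non-collapsed Gromov--Hausdorff limit of smooth $4$-manifolds with nonnegative Ricci curvature. Hence $k(X)\le2$, and together with the lower bound, $k(X)\in\{1,2\}$.

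The main obstacle is the case $k(X)=3$: the entire content there is to exclude a closed half-space from arising as an asymptotic cone of $\tilde M$, which I do via the codimension-$\ge2$ bound on the singular set of a non-collapsed Ricci limit (equivalently, the fact that such limits have empty boundary); one could alternatively invoke Theorem~\ref{SimplyConnectnessOf(n-3)symmetry}-type reasoning on the quotient side, but the singular-set argument is the most direct. The lower bound is routine once Lemma~\ref{Torsion-free} is available; the only care needed is the passage from $\Omega(\tilde M,\langle\gamma\rangle)$ back to the prescribed $(X,x^*,G)\in\Omega(\tilde M,\Gamma)$, handled by the subgroup-convergence fact noted above.
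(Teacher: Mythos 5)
Your proposal is correct and follows essentially the same route as the paper: the lower bound and the closed $\R$-subgroup come from Lemma \ref{Torsion-free} together with Lemma \ref{Lem17:28}(2), and the upper bound rules out $k(X)\ge 3$ via Cheeger--Colding's codimension-$2$ singular set theorem plus Colding's volume rigidity, exactly as in the paper's proof. Your treatment merely spells out details the paper leaves implicit (the passage from $\Omega(\tilde M,\langle\gamma\rangle)$ to the given $(X,x^*,G)$, and the explicit exclusion of the half-space $\R^3\times[0,\infty)$), which is a faithful unpacking rather than a different argument.
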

\begin{proof}
	According to the codimension $2$ regularity theorem by Cheeger-Colding (\cite{CC97}), if there exists an $(X,x^*,G)\in (\tilde M,\Gamma)$ such that $k(X)\ge 3$, then $X$ is isometric to $\R^4$. With this, by Colding's rigidity theorem on maximal volume growth (\cite{Col97}), $\tilde M$ itself is isometric to $\R^4$, which contradicts that $M$ is not flat.
	On the other hand, By Lemma \ref{Torsion-free}, $\Gamma$ contains a non-trivial torsion-free element. By Lemma \ref{Lem17:28} (2), for any $(X,x^*,G)\in (\tilde M,\Gamma)$, $G$ contains at least a closed $\R$-subgroup, and $k(X)\ge 1$.
	Thus, $k(X)\in\{1,2\}$.
\end{proof}

Put $K=P_1(G)$, where $P_1:\Isom(X)\to\Isom (\R^{k(X)})$ is the standard projection, which is a closed map. Hence $K$ is an abelian closed subgroup of $\Isom(\R^{k(X)})$ and it includes a closed $\R$-subgroup. The following lemma categorizes all possible cases of $K$.

\begin{lem}\label{T2T3}\quad
	
	\begin{enumerate}
		\item If $k(X)=1$, then $K$ is isomorphic to $\R$ which acts on the $\R^1$-factor of $X$ by translation.
		
		\item If $k(X)=2$, then $(X,x^*)$ is isometric to $(\R^2\times C(S_r^1),(0^2,z^*))$, where $S_r^1$ is the circle with radius $r\in(0,1)$, and $r$ only depends on the volume growth rate of $\tilde M$, and $z^*$ is the cone vertex of $C(S_r^1)$. We may choose a normal coordinate $(x,y)$ on the $\R^2$-factor, and for each $t\in(-\infty,+\infty)$, we define $t_1(x,y):=(x+t,y)$ and $t_2(x,y):=(x,y+t)$, $r_t(x,y):=(x,t-y)$. Then
		\begin{enumerate}
			\item If $G(x^*)$ is connected, then one of the following holds.
			\customitemize{a}

				\item \label{itm:a1} $K=\spa{t_1,s_2|t,s\in(-\infty,+\infty)}$.
				
				\item \label{itm:a2}  $K=\spa{t_1,r_0|t\in(-\infty,+\infty)}$.
				
				\item \label{itm:a0} $K=\spa{t_1|t\in(-\infty,+\infty)}$.
			\end{enumerate}
			
			\item  If $G(x^*)$ is not connected, then one of the following holds.
			\customitemize{b}
				\item \label{itm:b1} There exists $a>0 $ such that $K=\spa{t_1,a_2|t\in(-\infty,+\infty)}$.
				
				\item \label{itm:b2} There exists $b>0 $ such that $K=\spa{t_1,r_b|t\in(-\infty,+\infty)}$.
			\end{enumerate}
		\end{enumerate}

	\end{enumerate}

\end{lem}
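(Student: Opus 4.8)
\textbf{Proof proposal for Lemma \ref{T2T3}.}

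The plan is to analyze the abelian closed subgroup $K \le \Isom(\R^{k(X)})$ in each of the two cases $k(X) = 1, 2$, using three facts that are already available: first, $X$ is a metric cone with vertex the reference point (Euclidean volume growth of $\tilde M$), so its unit cross section $CS(X)$ is a non-collapsed $\RCD(k(X)+1, k(X)+2)$-space; second, $K$ contains a closed $\R$-subgroup (by the previous lemma, since $\Gamma$ is torsion-free and Lemma \ref{Lem17:28}(2) applies); third, the orbit $G(x^*)$ lies entirely in the $\R^{k(X)}$-factor (since $X$ has no line outside that factor, every isometry of $X$ fixes the cone vertex of the non-Euclidean part, so $G(x^*) = K(0^{k(X)})$ under the identification). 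When $k(X) = 1$, $\Isom(\R) = \R \rtimes \Z_2$; any closed abelian subgroup containing an $\R$-subgroup must be $\R$ itself (the only abelian subgroups strictly larger than $\R$ inside $\R\rtimes\Z_2$ are not closed or contain reflections that do not commute with all translations), acting by translations. That gives (1).

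For $k(X) = 2$: first I would pin down $X$. Writing $X = \R^2 \times C(Z)$ with $Z$ the cross-section of the non-Euclidean factor, I use that $CS(X)$ is a non-collapsed $\RCD(3,4)$-space which is a spherical suspension of a $1$-dimensional space; since $Z$ must be $1$-dimensional (as $k(X) = 2$ and $\dim \tilde M = 4$) and is an $\RCD(0,1)$-space of diameter $\le \pi$, $Z$ is a circle $S^1_r$ of radius $r \in (0,1)$ (it cannot be an interval, else $C(Z)$ would be a half-plane contributing a further line and $k(X)$ would be $\ge 3$; it cannot have $r = 1$, else $C(Z) = \R^2$ again forcing $k(X) \ge 3$). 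The fact that $r$ depends only on the volume growth rate of $\tilde M$ follows because the volume density of the asymptotic cone equals $\nu = \lim \vol(B_R)/R^4$, and for $\R^2 \times C(S^1_r)$ this density is a strictly monotone explicit function of $r$.

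Next, with the normal coordinates $(x,y)$ fixed, $\Isom(\R^2) = \R^2 \rtimes O(2)$, and $K$ is a closed abelian subgroup containing a translation line. I would argue that the $O(2)$-part of any element of $K$ must be trivial or a reflection: a nontrivial rotation does not commute with a translation. So $K$ lies in $\R^2 \rtimes \Z_2$ where the $\Z_2$ is generated by a reflection; after conjugating so the translation line is the $x$-axis, the reflection, to commute with all $t_1$, must fix the $x$-direction, hence is $r_c \colon (x,y) \mapsto (x, c - y)$ for some constant, which after recentering $y$ I normalize (so the standard choice of coordinate). Then $K \cap \R^2$ is a closed subgroup of $\R^2$ containing the $x$-axis, hence is either the $x$-axis, or all of $\R^2$, or $\{t_1 : t \in \R\} \oplus a\Z \cdot e_2$ for some $a > 0$. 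Splitting on whether $K$ contains a reflection and whether $K \cap \R^2$ is $1$- or $2$-dimensional or $1$-dimensional-plus-discrete gives exactly the five listed possibilities; the connectedness dichotomy of $G(x^*) = K(0^2)$ then distinguishes the (a)-cases (where the orbit is $\R$, $\R$, or $\R$ — connected, because the $y$-direction contribution, if present, is a continuous translation subgroup or a reflection through the orbit line) from the (b)-cases (where either the discrete $a\Z$ in the $y$-direction, or the reflection $r_b$ with $b \neq 0$ moving the line off itself, makes $K(0^2)$ a union of two parallel lines or two points' worth of disconnected pieces). I expect the main obstacle to be the bookkeeping in this last step: carefully checking, for each combination of (reflection present / absent) $\times$ (structure of $K \cap \R^2$), whether the resulting orbit through the origin is connected, and verifying that no further normalization collapses two of the listed cases into one — in particular showing case (\ref{itm:a2}) (a reflection $r_0$ fixing the origin's line, orbit $= x$-axis, connected) is genuinely distinct from case (\ref{itm:b2}) (a reflection $r_b$ with $b > 0$, orbit $=$ two disjoint lines), and that $r_0$ cannot be translated away within $K$.
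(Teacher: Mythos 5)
Your overall strategy coincides with the paper's appendix argument: reduce to classifying the closed abelian subgroups of $\Isom(\R^2)$ containing a line of translations, via the observation that $v\mapsto Av+w$ commutes with a nontrivial translation only if $A$ fixes the translation direction, and then read off connectedness of the orbit of the origin. However, two steps as written would fail. First, your exclusion of the interval cross-section is incorrect: if $Z$ were an interval of length $\theta<\pi$, then $C(Z)$ is a flat sector of angle $\theta$, which contains no line, so it does not ``contribute a further line''; your argument only rules out $\theta=\pi$. The correct reason (what the paper's phrase ``simple consequence of Cheeger--Colding's theory'' refers to) is the codimension-two theorem for noncollapsed limits: $\R^2\times C(Z)$ with $Z$ an interval has points whose tangent cone is the half-space $\R^3\times[0,\infty)$, and such codimension-one singular points cannot occur (equivalently, by \cite{BPS24} the cross-section is a closed $3$-manifold, so it has no boundary). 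Second, the normalization ``after recentering $y$'' is not available: the origin of the coordinates is pinned at the $\R^2$-projection of the reference point $x^*$ (only rotations and reflections fixing it are free), and translating $y$ so that the reflection becomes $r_0$ would exactly conflate case (\ref{itm:a2}) with case (\ref{itm:b2}); you notice this tension at the end, but as written the normalization is inconsistent with the dichotomy you need, so it must simply be dropped.

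Two smaller omissions. You assert that the closed $\R$-subgroup of $K$ is a line of translations; this requires an argument and is precisely the paper's Lemma \ref{R-actionOnR^2} (one must rule out a closed subgroup isomorphic to $\R$ projecting onto $\SO(2)$, e.g.\ because such a subgroup would be a compact circle of rotations about a fixed point, or by the paper's freeness/eigenvalue argument). And to land on exactly the five listed groups, the ``bookkeeping'' you defer needs abelianness twice more: an element with reflection part cannot commute with any nontrivial vertical translation, so its presence forces $K\cap\R^2$ to be only the $t_1$-line (ruling out a sixth and seventh combination), and any two reflection-type elements of $K$ must share the same axis $c$. With these routine computations your case split does produce (\ref{itm:a0}), (\ref{itm:a1}), (\ref{itm:a2}), (\ref{itm:b1}), (\ref{itm:b2}) together with the stated connectedness dichotomy, matching the paper's hands-on derivation via the subgroup $K_1$ of elements fixing the first coordinate of the origin.
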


	The proof of the above Lemma is carried out through an elementary yet detailed discussion. For completeness, we have included it in the appendix.

In fact, the various situations in Lemma \ref{T2T3} cannot coexist.

\begin{lem}\label{(a1)}
	If there exists $(X,x^*,G)\in\Omega(\tilde M,\Gamma)$ satisfying $k(X)=2$ and (\ref{itm:a1}), then every $(Y,y^*,H)\in\Omega(\tilde M,\Gamma)$ satisfies $k(Y)=2$ and (\ref{itm:a1}).
\end{lem}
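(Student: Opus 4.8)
The plan is to deduce the statement from two global facts about $\Omega(\tilde M,\Gamma)$ rather than from a new critical rescaling: the isotropy stability of Lemma \ref{StabilityOfIsotropy}, and the fact that the asymptotic cone of the fixed manifold $M$ has a dimension that does not depend on the chosen blow-down sequence.

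The heart of the argument is a dimension count. Since $\Gamma$ contains a nontrivial torsion-free element (Lemma \ref{Torsion-free}), every $(X,x^*,G)\in\Omega(\tilde M,\Gamma)$ has $k(X)\in\{1,2\}$, and $P_1(G)$ is a closed abelian subgroup of $\Isom(\R^{k(X)})$ containing a closed $\R$-subgroup, hence it has no nontrivial rotation; in particular the stabilizer of $0$ in $P_1(G)$ is finite, and since $I(G)$ is abelian and acts on $CS(X)$ via a finite factor and a subgroup of $\Isom(Z)$ for the small cross section $Z$ of dimension $3-k(X)\le2$, we get $\dim I(G)\le1$ for every cone. Thus Lemma \ref{StabilityOfIsotropy} applies and furnishes a compact Lie group $K$ with $I(G)\cong K$ for all $(X,x^*,G)\in\Omega(\tilde M,\Gamma)$. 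As $\tilde M$ has Euclidean volume growth, each $X$ is a non-collapsed $4$-dimensional metric cone and $X/G$ is an asymptotic cone of $M$; writing $p$ for the dimension of a principal $G$-orbit on $X$, the proper isometric action of $G$ on $X$ gives $\dim(X/G)=4-p$. The identity I would use is
\[
p \;=\; \dim\big(G(x^*)\big)+m(K),
\]
where $m(K)\in\{0,1\}$ equals $1$ exactly when $K$ is infinite (i.e.\ contains an $\SO(2)$): indeed $G(x^*)=P_1(G)(0)$ is an affine subspace of $\R^{k(X)}$, the reflection parts occurring in cases (a2),(b2) of Lemma \ref{T2T3} are finite and irrelevant to dimensions, and a principal isotropy subgroup of $G$ on $X$ agrees up to finite index with a generic stabilizer of $I(G)$ on the cross section, so that the extra dimension of a principal orbit over $G(x^*)$ is exactly the dimension of a generic $I(G)$-orbit on the cross section.

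Combining these, $\dim\big(G(x^*)\big)=4-\dim(X/G)-m(K)$ is constant over $\Omega(\tilde M,\Gamma)$: $m(K)$ is constant by Lemma \ref{StabilityOfIsotropy}, and $\dim(X/G)$ is the (sequence-independent) dimension of the asymptotic cone of $M$. If some $(X,x^*,G)$ is of type (a1), then $G(x^*)=\R^2$ there, so this common value is $2$. Hence for every $(Y,y^*,H)\in\Omega(\tilde M,\Gamma)$ the orbit $H(y^*)=P_1(H)(0)$ lies in $\R^{k(Y)}\times\{\mathrm{vertex}\}$ and is $2$-dimensional; as $k(Y)\le2$ this forces $k(Y)=2$, and then $P_1(H)$ — a closed abelian rotation-free subgroup of $\Isom(\R^2)$ whose orbit of $0$ has dimension $2$ — must be the full translation group. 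By Lemma \ref{T2T3}(2), and since the connected cases (a0),(a2) have $1$-dimensional orbits, $(Y,y^*,H)$ is of type (a1), which is the claim.

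I expect the delicate points to be the two cited structural facts — the orbit-space dimension formula $\dim(X/G)=\dim X-p$ for the (proper, isometric) $G$-action on the possibly singular cone $X$, which should follow from the structure theory of $\RCD$ spaces and their quotients by isometric group actions, and the independence of $\dim(X/G)$ from the blow-down sequence, which should follow from the monotonicity of volume growth together with $M=\tilde M/\Gamma$ — plus the case-by-case check of the displayed identity for $p$ (via Lemma \ref{T2T3}(2) when $k(X)=2$, and immediately when $k(X)=1$, where $G(x^*)$ is the orbit of $0$ in the $\R$-factor). Alternatively, the same information can be packaged as a critical rescaling argument in the spirit of Lemma \ref{StabilityOfIsotropy}, with ``being of type (a1)'' (after reduction to property (P) via Lemma \ref{ReduceToTangentCone}) as the model, in which the requisite gap sublemma is precisely what the constancy of $\dim(X/G)$ and of $K$ provide.
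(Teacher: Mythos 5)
There is a genuine gap, and it sits exactly where you flag it as a ``delicate point'': your argument rests on the claim that $\dim(X/G)$ --- the dimension of the asymptotic cone of $M$ --- is independent of the blow-down sequence. This is not a cited or known fact, it is not established anywhere in the paper, and it cannot be extracted from ``monotonicity of volume growth'': when $\Gamma$ is infinite the asymptotic cones of $M$ are collapsed limits, and the volume growth of $M$ does not control the Hausdorff dimension of such limits in any direct way. Worse, via your own identities $\dim(X/G)=4-p$ and $p=\dim G(x^*)+m(K)$ (with $m(K)$ constant by Lemma \ref{StabilityOfIsotropy}), constancy of $\dim(X/G)$ over $\Omega(\tilde M,\Gamma)$ is \emph{equivalent} to constancy of $\dim G(x^*)$, which is essentially the content of Theorem \ref{OrbitisConnected} that Lemmas \ref{(a1)}--\ref{(b2)} are building toward. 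So the proposal is close to assuming the conclusion; if that constancy were available up front, most of Section \ref{section5} would be unnecessary. A secondary, lesser gap is the orbit-space dimension formula $\dim(X/G)=\dim X-p$ and the identification of the principal isotropy of $G$ with a generic stabilizer of $I(G)$ on the cross section: these are plausible for isometric actions on the singular cones at hand, but you assert rather than prove them, and they are not among the paper's tools.

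The paper's actual proof avoids any such global dimension input. It runs a critical rescaling argument between a cone of type (\ref{itm:a1}) and a putative cone $(Y,y^*,H)$ whose orbit stays $\epsilon$-far from every (\ref{itm:a1})-model, where the ``gap'' is furnished not by constancy of $K$ or of a dimension, but by the rigidity in Lemma \ref{T2T3}: if the orbit $G_1(x_1^*)$ of some cone is Gromov--Hausdorff close to the orbit $\R^2$ of an (\ref{itm:a1})-model, then $(X_1,x_1^*)$ is forced to be isometric to $\R^2\times C(S^1_r)$ and $G_1$ must be of type (\ref{itm:a1}) or (\ref{itm:b1}) with small translation parameter $a$, so after one fixed further rescaling (by $0.1$) the orbit is again close to $\R^2$; minimality of the critical scale then gives the contradiction. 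Your closing remark that the requisite gap sublemma ``is precisely what the constancy of $\dim(X/G)$ and of $K$ provide'' therefore mischaracterizes the mechanism: the gap comes from the classification of $P_1(G)$, not from isotropy stability (which the paper only invokes later, in Lemma \ref{(b2)}, after (\ref{itm:a1}) and (\ref{itm:b1}) have been excluded). To repair your write-up you would either have to prove the sequence-independence of $\dim(X/G)$ directly (no such proof is offered) or fall back on the rescaling-plus-Lemma-\ref{T2T3} scheme, which is the paper's route.
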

\begin{proof}
	
Note that for any $(X_1, x^*_1, G_1) \in \Omega(\tilde{M}, \Gamma)$ satisfying
\begin{equation}\label{17:37}
	d_{GH}((X_1, x^*_1, G_1(x^*_1)), (Z, z^*, S(x^*))) \leq 0.01,
\end{equation}
for some $(Z, z^*, S) \in \Omega(\tilde{M}, \Gamma)$ such that $k(Z) = 2$ and (\ref{itm:a1}) holds, by Lemma \ref{T2T3}, it follows that $(X_1, x^*_1)$ is isometric to $(Z, z^*)$, and $G_1$ satisfies either condition (\ref{itm:a1}) or (\ref{itm:b1}) for $a \in (0, 0.03)$. If $G_1$ satisfies (\ref{itm:a1}), then the left-hand side of \eqref{17:37} equals $0$. If $G_1$ satisfies (\ref{itm:b1}) for $a \in (0, 0.03)$, then the left-hand side of \eqref{17:37} is strictly less than $a$.
	
The desired conclusion follows from the following sublemma by taking the limit as $\epsilon \to 0$.
\begin{slem}
	Under the assumption of Lemma \ref{(a1)}, for any $\epsilon\in(0,0.01)$, there exists $(Z,z^*,S)\in \Omega(\tilde M,\Gamma)$ satisfying $k(Z)=2$ and (\ref{itm:a1}) such that, $$d_{GH}((Y,y^*,H(y^*)),(Z,z^*,S(y^*)))\le \epsilon.$$ 
\end{slem}

\begin{proof}[Proof of the sublemma]
	
	Argue by contradiction. Suppose the opposite is true. That is, there exists $\epsilon\in(0,0.01)$, for any $(Z,z^*,S)\in \Omega(\tilde M,\Gamma)$ satisfying $k(Z)=2$ and (\ref{itm:a1}), we have 
	\begin{equation}\label{0:31}
		d_{GH}((Y,y^*,H(y^*)),(Z,z^*,S(y^*)))> \epsilon.
	\end{equation}

	The proof is by a critical scaling argument. We can choose $r_i\to\infty$ and $s_i\to\infty$ satisfying that 
	\begin{enumerate}
		\item $r_is_i^{-1}\to\infty$,
		\item $(r_i^{-1}\tilde M,\tilde p,\Gamma)\GH(Y,y^*,H),$
		\item $(s_i^{-1}\tilde M,\tilde p,\Gamma)\GH (X,x^*,G)$.
	\end{enumerate}
	
	Put $(N_i,p_i,\Gamma):=(r_i^{-1}\tilde M,\tilde p,\Gamma)$ and $\Lambda_i:=r_is_i^{-1}$. Then the above properties become,
	\begin{enumerate}
		\item $\Lambda_i\to\infty$,
		\item $(N_i,p_i,\Gamma)\GH(Y,y^*,H)$,
		\item $(\Lambda_iN_i,p_i,\Gamma)\GH(X,x^*,G)$.
	\end{enumerate}
	
	For each $i$, define 
	\begin{align*}
		A_i:=&\{R\in[1,\Lambda_i]|d_{GH}((RN_i,p_i,\Gamma(p_i)),(Z,z^*,S(y^*)))\le 0.001\epsilon,\\&\text{for some }(Z,z^*,S)\in \Omega(\tilde M,\Gamma)\text{ satisfying } k(Z)=2\text{ and (\ref{itm:a1})}\}.
	\end{align*}

	Obvious for all large $i$, $\Lambda_i\in A_i$. By contradiction assumption (\ref{0:31}), for every large $i$, $1\notin A_i$. Choose $\lambda_i\in A_i$ such that $\inf A_i\le \lambda_i<\inf A_i+i^{-1}$. 
	
	We claim that $\lambda_i\to\infty$. Suppose not, passing to a subsequence, we may assume $\lambda_i\to C\in[1,+\infty)$. Then
	$$(\lambda_iN_i,p_i,\Gamma)\GH(CY,y^*,H).$$
	By the choice of $\lambda_i$ and $A_i$, the above convergence yields $$d_{GH}((CY,y^*,H(y^*)),(Z,z^*,S(z^*)))\le0.002\epsilon,$$for some $(Z,z^*,S)\in \Omega(\tilde M,\Gamma)$ satisfying $k(Z)=2$ and (\ref{itm:a1}). Note that $S(z^*)$ is isometric to $\R^2$. By Lemma \ref{T2T3}, the space $(CY, y^*, H)$ satisfies that $CY$ is isometric to $\R^2\times C(S_r^1)$ and either condition (\ref{itm:a1}) or (\ref{itm:b1}) for $a \in (0, 0.006\epsilon)$. In either case, the following inequality holds:
	$$
	d_{GH}((Y, y^*, H(y^*)), (Z, z^*, S(z^*))) \le 0.002\epsilon,
	$$which contradicts the contradiction assumption (\ref{0:31}). So the claim follows.
	
	Up to a subsequence, we assume $$(\lambda_iN_i,p_i,\Gamma)\GH(Y_1,y_1^*,H_1).$$
	By definition of $\lambda_i$ and $A_i$, there exists $(Z,z^*,S)\in\Omega(\tilde M,\Gamma)$ satisfying $k(Z)=2$ and (\ref{itm:a1}) such that $$d_{GH}((Y_1,y_1^*,H_1(y_1^*)),(Z,z^*,S(z^*)))\le0.002\epsilon.$$
	Similar to the above, by Lemma \ref{T2T3}, the space $(Y_1, y_1^*, H_1)$ satisfies that $Y_1$ is isometric to $\R^2\times C(S_r^1)$ and either condition (\ref{itm:a1}) or (\ref{itm:b1}) for $a \in (0, 0.006\epsilon)$. In either case, the following inequality holds:
	$$
	d_{GH}((0.1Y_1, y_1^*, H_1(y_1^*)), (Z, z^*, S(z^*))) \le 0.0004\epsilon.
	$$Combing the fact that $\lambda_i\to\infty$, the above convergence implies that for large $i$, $0.1\lambda_i\in A_i$. So $\inf A_i\le 0.1\lambda_i<0.1\inf A_i+0.1\epsilon_i$ where $\epsilon_i$ is a subsequence of $i^{-1}$. This concludes that $1\le \inf A_i<\epsilon_i\to0$, which is impossible.
	
	So the proof of the sublemma is complete.

\end{proof}

\end{proof}

\begin{cor}\label{(b1)}
Every $(X,x^*,G)\in\Omega(\tilde M,\Gamma)$ does not satisfy (\ref{itm:b1}).
\end{cor}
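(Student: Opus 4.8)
The plan is to derive Corollary \ref{(b1)} from Lemma \ref{(a1)} together with Pan's observation, by showing that the existence of a $(X,x^*,G)\in\Omega(\tilde M,\Gamma)$ satisfying \eqref{itm:b1} forces \emph{every} equivariant asymptotic cone to have the form in \eqref{itm:a1}, which then gives a contradiction.

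Suppose for contradiction that some $(X_0,x^*_0,G_0)\in\Omega(\tilde M,\Gamma)$ satisfies \eqref{itm:b1}, so $K_0=\langle t_1,a_2\mid t\in(-\infty,+\infty)\rangle$ for some $a>0$, and the orbit $G_0(x^*_0)$ is not connected. The first step is to pass to a suitable (equivariant) tangent cone at the reference point: rescaling $(X_0,x^*_0,G_0)$ by $\lambda_i\to\infty$, the subgroup $\langle a_2\rangle$ of translations by $a$ gets rescaled to $\langle (a/\lambda_i)_2\rangle$, whose closure in the limit is the full $\R$-translation subgroup in the $y$-direction. Hence there exists $(X_1,x^*_1,G_1)\in\Omega(\tilde M,\Gamma)$ — obtained as an equivariant asymptotic cone of $\tilde M$ by a diagonal argument, since a tangent cone of an asymptotic cone of $\tilde M$ is again an asymptotic cone of $\tilde M$ (using that $\tilde M$ has Euclidean volume growth, so all asymptotic cones are metric cones and rescaling an asymptotic cone at its vertex yields another asymptotic cone) — with $k(X_1)=2$ and $K_1=\langle t_1,s_2\mid t,s\in(-\infty,+\infty)\rangle$, i.e. $(X_1,x^*_1,G_1)$ satisfies \eqref{itm:a1}. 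The point here is that \eqref{itm:b1} is a ``discrete'' version of \eqref{itm:a1} and blowing up turns the discreteness into continuity; one must check that $G_1(x^*_1)$ is indeed isometric to $\R^2$ (connected), which follows since $X_1\cong\R^2\times C(S_r^1)$ and $K_1$ contains all translations of $\R^2$.

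The second step applies Lemma \ref{(a1)} with this $(X_1,x^*_1,G_1)$ in the role of the hypothesized cone satisfying \eqref{itm:a1}: the conclusion is that \emph{every} $(Y,y^*,H)\in\Omega(\tilde M,\Gamma)$ satisfies $k(Y)=2$ and \eqref{itm:a1}. In particular $H(y^*)$ is isometric to $\R^2$, hence connected, for every equivariant asymptotic cone. The final step is to invoke Pan's observation \cite[Lemma 2.5]{Pan20}: if $\Gamma=\pi_1(M)$ were not finitely generated, there would be some $(Y,y^*,H)\in\Omega(\tilde M,\Gamma)$ with $H(y^*)$ not connected, contradicting what we just showed; so $\Gamma$ is finitely generated, whence by Lemma \ref{Lem17:28}(1) applied to the (unbounded, since $\Gamma$ infinite and $\tilde M$ a genuine cover) orbit $\Gamma(\tilde p)$, every $G(y^*)$ is unbounded and \emph{not discrete}. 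But an orbit satisfying \eqref{itm:b1} is a disjoint union of lines $\R\times a\Z$ in $\R^2$ inside $X_0=\R^2\times C(S^1_r)$ — which \emph{is} unbounded, so that alone is not yet the contradiction. The actual contradiction is cleaner: $(X_0,x^*_0,G_0)$ itself satisfies \eqref{itm:b1}, but we have just proved every element of $\Omega(\tilde M,\Gamma)$ satisfies \eqref{itm:a1}, and \eqref{itm:a1} and \eqref{itm:b1} are mutually exclusive descriptions of $K$ (one gives $K$ containing all $y$-translations, the other only $a\Z$-translations), a contradiction.

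The main obstacle I anticipate is making the first step — the blow-up producing a cone satisfying \eqref{itm:a1} — fully rigorous as a statement about $\Omega(\tilde M,\Gamma)$ rather than about $\Omega$ of the already-limiting space: one needs that an equivariant tangent cone at the reference point of an element of $\Omega(\tilde M,\Gamma)$ can itself be realized in $\Omega(\tilde M,\Gamma)$, which is exactly the kind of diagonal/critical-rescaling argument used repeatedly in Sections \ref{section4} and \ref{section5} (e.g. in the proof of Lemma \ref{StabilityOfIsotropy} and Lemma \ref{(a1)}), so it should go through by the same reasoning: choose $r_i,s_i\to\infty$ with $r_is_i^{-1}\to\infty$ realizing $(X_0,x^*_0,G_0)$ at scale $r_i$, and a further scale $\mu_i$ with $\mu_i\to\infty$, $\mu_i r_i^{-1}\to 0$ wait — rather, one rescales $X_0$ at its vertex and transfers back to $\tilde M$ via a diagonal sequence, using that rescalings of $\tilde M$ at $\tilde p$ by parameters between two asymptotic scales still converge (subsequentially) to elements of $\Omega(\tilde M,\Gamma)$. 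Once this realization is in hand, Lemma \ref{(a1)} does all the remaining work.
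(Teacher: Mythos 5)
Your argument is essentially the paper's own proof: rescale the cone satisfying (\ref{itm:b1}) so that the discrete $a\Z$-translations become dense, realize the limit in $\Omega(\tilde M,\Gamma)$ by a diagonal argument, conclude it satisfies (\ref{itm:a1}), and then apply Lemma \ref{(a1)} to contradict the incompatibility of (\ref{itm:a1}) with (\ref{itm:b1}); the detour through Pan's observation is unnecessary, as you yourself note. One small correction of terminology: the rescaling you compute (translations by $a$ becoming translations by $a/\lambda_i$) is a blow-\emph{down} (a further asymptotic rescaling, as the paper says), not a tangent cone at the reference point — a genuine blow-up would amplify the $a$-translations and produce case (\ref{itm:a0}) instead, so the argument works only with the blow-down you actually describe.
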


\begin{proof}
Suppose there is some $(X,x^*,G)\in\Omega(\tilde M,\Gamma)$ satisfying (\ref{itm:b1}). Then by a blow down, there exists some $(Y,y^*,H)\in\Omega(\tilde M,\Gamma)$ satisfying (\ref{itm:a1}). According to Lemma \ref{(a1)}, every element of $\Omega(\tilde M,\Gamma)$ satisfies (\ref{itm:a1}). This is a contradiction.
\end{proof}

\begin{lem}\label{DimensionofIsotropyGroup}
	If there exists $(X,x^*,G)\in\Omega(\tilde M,\Gamma)$ not satisfying (\ref{itm:a1}), then
\begin{align}\label{5.10}
\max\{\dim(I(H))|(Y,y^*,H)\in \Omega(\tilde M,\Gamma)\}\le 1.
\end{align}
\end{lem}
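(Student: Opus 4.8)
The plan is to argue by contradiction: suppose some $(Y,y^*,H)\in\Omega(\tilde M,\Gamma)$ has $\dim(I(H))\ge 2$, while by hypothesis some $(X,x^*,G)\in\Omega(\tilde M,\Gamma)$ fails to satisfy (\ref{itm:a1}). First I would analyze what $\dim(I(H))\ge 2$ forces. Since $\Gamma$ is abelian and torsion-free (Lemma \ref{Torsion-free}), the limit group $H$ is abelian, and $I(H)$ is an abelian closed Lie subgroup of $\Isom(CS(Y))$ of dimension $\ge 2$; because $CS(Y)$ is a non-collapsed $\mathrm{RCD}(n-2,n-1)=\mathrm{RCD}(2,3)$-space and $k(Y)\in\{1,2\}$, this is a strong constraint. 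The identity component $I(H)_0$ is a torus acting effectively and isometrically on $CS(Y)$, and a $2$-torus acting on a $3$-dimensional (non-collapsed) $\mathrm{RCD}(2,3)$-space forces $Y$ (after accounting for the Euclidean factor) to be highly symmetric — I expect to conclude that $Y$ must be isometric to $\R^4$ or that $G$ acts with a full $\R^2$-translation part, i.e.\ $k(Y)=2$ and condition (\ref{itm:a1}) holds for $(Y,y^*,H)$ itself.

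Next I would propagate this to \emph{all} elements of $\Omega(\tilde M,\Gamma)$. The key tool is Lemma \ref{(a1)}: if (\ref{itm:a1}) holds somewhere, it holds everywhere. So if the high-dimensional isotropy forces (\ref{itm:a1}) at $(Y,y^*,H)$, then every $(X,x^*,G)\in\Omega(\tilde M,\Gamma)$ satisfies (\ref{itm:a1}), contradicting the standing hypothesis that some $(X,x^*,G)$ does not. Alternatively, if $\dim(I(H))\ge 2$ forces $Y\cong\R^4$, then by Colding's rigidity theorem on maximal volume growth \cite{Col97} (as used already in the proof right above this lemma), $\tilde M$ itself is isometric to $\R^4$, so $M$ is flat — contradicting the hypothesis in Theorem \ref{OrbitisConnected} that $M$ is not flat. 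Either way we reach a contradiction, establishing \eqref{5.10}.

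The main obstacle I anticipate is the structural step: rigorously ruling out a $2$-dimensional isotropy action on $CS(Y)$ unless we are in the $\R^4$ or (\ref{itm:a1}) case. This requires knowing the classification of isometric torus actions on $3$-dimensional non-collapsed $\mathrm{RCD}(2,3)$-spaces (which by \cite{BPS24} are close to smooth situations — $CS(Y)$ is homeomorphic to $S^2$ or a spherical space form in the relevant cases), together with Cheeger–Colding's codimension-$2$ regularity \cite{CC97} to handle the case $k(Y)=2$ where $CS(Y)=C(S^1_r)$ and its unit cross section is a spherical suspension. One must check that an effective $T^2$ action cannot occur on $S^1_r\ast(\text{point})$-type cross sections unless $r=1$, i.e.\ unless we are back in the Euclidean/flat case, and that when $k(Y)=1$ a $2$-torus in $I(H)$ acting on the $\mathrm{RCD}(2,3)$ cross section $CS(Y)$ again forces either splitting (raising $k$) or $\R^4$. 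I would lean on the same equivariant-stability machinery (\cite{MRW08}, \cite{PR18}) used in Section \ref{section4} to make these reductions precise, and on the fact that $\Gamma$ abelian keeps all limit groups abelian so no nonabelian isometry group complications arise.
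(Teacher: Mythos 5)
There is a genuine gap, and it sits exactly where you flag your ``main obstacle'': the structural step that $\dim(I(H))\ge 2$ forces $Y\cong\R^4$ or condition (\ref{itm:a1}) is never proved, and as you phrase it (a $2$-torus acting isometrically on the $3$-dimensional cross-section forces rigidity) it is in fact false. For $Y=\R^2\times C(S^1_r)$ with $r<1$, the unit cross-section is the spherical join $S^1 * S^1_r$, whose isometry group already contains an effective $2$-torus (rotations of each circle factor) fixing the vertex; yet $Y$ is not $\R^4$ and nothing about the limit group $G$ -- in particular (\ref{itm:a1}) -- follows. So no classification of torus actions on $3$-dimensional non-collapsed $\mathrm{RCD}(2,3)$-spaces can deliver the dichotomy you want at the level of $CS(Y)$. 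What actually controls $I(H)$ is that $I(H)\subset H$ and $P_1(H)=K$ is an abelian closed subgroup of $\Isom(\R^{k(Y)})$ containing a closed $\R$ of translations: by Lemma \ref{T2T3}, once (\ref{itm:a1}) and (\ref{itm:b1}) are excluded via Lemma \ref{(a1)} and Corollary \ref{(b1)}, the elements of $K$ fixing the origin form at most a $\Z_2$ (a single reflection), so when $k(Y)=2$ the isotropy $I(H)$ is, up to that $\Z_2$, a closed subgroup of $\Isom(S^1_r)$, and when $k(Y)=1$ it acts (trivially on the $\R$-factor, hence) on the $2$-dimensional cross-section $Z$ of $C(Z)$. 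Your proposal never performs this reduction from the $3$-dimensional cross-section to the $1$- or $2$-dimensional pieces, which is the heart of the matter; the propagation via Lemma \ref{(a1)} and the appeal to Colding's rigidity only become relevant after that unproven step, so the lemma is not established by your argument.

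For comparison, the paper's proof is direct rather than by contradiction: it simply runs through the cases left by Lemma \ref{T2T3}. If $k(Y)=2$, the cases (\ref{itm:a0}), (\ref{itm:a2}), (\ref{itm:b2}) give $I(H)$ inside $\Isom(S^1_r)$ or a $\Z_2$-extension thereof, hence $\dim I(H)\le 1$. If $k(Y)=1$, then $I(H)$ is a closed subgroup of $\Isom(Z)$ with $Z$ a non-collapsed $\mathrm{RCD}(1,2)$-space; since $H$ is abelian, the principal orbit theorem for $\mathrm{RCD}$ spaces makes the principal orbit homeomorphic to $I(H)$, and since $Z$ is homeomorphic to $S^2$ by \cite{BPS24} no $2$-torus embeds in $Z$, so again $\dim I(H)\le 1$. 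You correctly identified several ingredients (abelianness and torsion-freeness of $\Gamma$, the $S^2$ identification from \cite{BPS24}, Lemma \ref{(a1)}, Colding rigidity), but the decisive use of the classification of $K$ in Lemma \ref{T2T3} together with the two-dimensionality of $Z$ is missing, and without it the bound \eqref{5.10} does not follow.
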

\begin{proof}

By Lemma \ref{(a1)} and Corollary \ref{(b1)}, every $(Y,y^*,H)\in\Omega(\tilde M,\Gamma)$ does not satisfy (\ref{itm:a1}) and (\ref{itm:b1}). The proof is by examining all remaining possibilities listed in Lemma \ref{T2T3}.
	
	If $k(Y)=2$, then $Y$ is isometric to $\R^2\times C(S_r^1)$ with $r\in(0,1)$. In either case of (\ref{itm:a0}) or (\ref{itm:b2}), $I(H)$ is a closed subgroup of $\Isom (S_r^1)$, while in case of (\ref{itm:a2}), $I(H)$ is subgroup of the semidirect product of $\Z_2$ and $\Isom(S_r^1)$. Hence we have $\dim(I(H))\leq 1$.

If $k(Y)=1$, then $Y$ is isometric to $\R\times C(Z)$ for some non-collapsed $\RCD(1,2)$-space $Z$ with $\diam(Z)<\pi$ and $I(H)$ is a closed subgroup of $\Isom (Z)$.
By combining the principal orbit theorem for $\RCD$ spaces (see \cite[Theorem 4.7]{GGKMS18}) with the fact that $H$ is abelian, the principal orbit of the $I(H)$-action is homeomorphic to $I(H)$. Note that by \cite[Theorem 8.1]{BPS24}, $Z$ is homeomorphic to the $2$-sphere $S^2$. Hence, no $2$-torus can be embedded in $Z$. In conclusion, we have $\dim(I(H))\le 1$.

\end{proof}

\begin{lem}\label{(b2)}
	Every $(X,x^*,G)\in\Omega(\tilde M,\Gamma)$ does not satisfy (\ref{itm:b2}).
\end{lem}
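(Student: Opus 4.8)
The plan is to argue by contradiction. Suppose that some $(X,x^*,G)\in\Omega(\tilde M,\Gamma)$ satisfies $(\ref{itm:b2})$, so that $X$ is isometric to $\R^2\times C(S_r^1)$ with $r\in(0,1)$ and $K:=P_1(G)=\spa{t_1,r_b}$ for some $b>0$. The first step is to make the hypothesis of the stability lemma available: since this space does not satisfy $(\ref{itm:a1})$, the contrapositive of Lemma \ref{(a1)} shows that \emph{no} element of $\Omega(\tilde M,\Gamma)$ satisfies $(\ref{itm:a1})$; hence Lemma \ref{DimensionofIsotropyGroup} applies, giving $\max\{\dim I(H):(Y,y^*,H)\in\Omega(\tilde M,\Gamma)\}\le 1$, so by Lemma \ref{StabilityOfIsotropy} there is a compact Lie group $K_0$ with $I(H)\cong K_0$ for every $(Y,y^*,H)\in\Omega(\tilde M,\Gamma)$.

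The second step is to compute $I(G)$ for the space satisfying $(\ref{itm:b2})$. Since $r<1$, every isometry of $C(S_r^1)$ fixes the vertex $z^*$, so for $g\in G$ the condition $g(x^*)=x^*$ forces the $\R^2$-translation part of $g$, equivalently that of $P_1(g)\in K$, to vanish. A direct check shows that in $K=\spa{t_1,r_b}$ with $b\ne 0$ the only element with vanishing translation part is the identity --- this is exactly the feature that distinguishes $(\ref{itm:b2})$ from $(\ref{itm:a2})$. Therefore $I(G)=\ker(P_1|_G)=:G_0$, a closed subgroup of $\Isom(S_r^1)$, and in particular $G_0\cong K_0$.

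The third step is a blow-down. Since $X$ is a metric product of a Euclidean space with a metric cone, it is scale invariant, so rescaling $(X,x^*,G)$ downward along a suitable sequence $\lambda_j\to\infty$ produces some $(X,x^*,G')\in\Omega(\tilde M,\Gamma)$; on the $\R^2$-factor the rescaling carries $r_b$ to $r_{\lambda_j^{-1}b}$, hence $P_1(G')=\spa{t_1,r_0}$ and $(X,x^*,G')$ satisfies $(\ref{itm:a2})$. Two features survive the rescaling: the subgroup $G_0\subset\Isom(S_r^1)$ commutes with the scaling, so $G_0\subseteq G'$; and a lift $\hat r_b\in G$ of $r_b$ rescales to an element $\hat r_0\in G'$ with $P_1(\hat r_0)=r_0\neq\id$ and $\hat r_0(x^*)=x^*$, so $\hat r_0\in I(G')$ but $\hat r_0\notin G_0$. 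Consequently $I(G')$ properly contains a closed subgroup isomorphic to $K_0$. But $I(G')\cong K_0$, and a compact Lie group cannot properly contain a closed subgroup isomorphic to itself: equal dimensions force the subgroup to contain the identity component, and equality of the numbers of connected components then forces the two to coincide. This contradiction proves the lemma.

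I expect the main difficulty to be the bookkeeping in the third step: one must verify that the blow-down keeps the \emph{old} isotropy $G_0$ intact while simultaneously producing a genuinely new isotropy element --- geometrically, the mirror of the reflection $r_b$, which lies at distance $b/2>0$ from $x^*$, is dragged onto $x^*$ in the limit --- and that this new element is not already contained in $G_0$. A subordinate point, handled at the outset, is that the very existence of a space satisfying $(\ref{itm:b2})$ is what forces the dimension bound throughout $\Omega(\tilde M,\Gamma)$, the hypothesis needed to invoke Lemma \ref{StabilityOfIsotropy}.
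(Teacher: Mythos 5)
Your proposal is correct and follows essentially the same route as the paper: blow down the cone satisfying (\ref{itm:b2}) so that the reflection $r_b$ limits to an extra isotropy element at $x^*$, and contradict the constancy of the isotropy type given by Lemma \ref{DimensionofIsotropyGroup} and Lemma \ref{StabilityOfIsotropy}. You merely make explicit two points the paper leaves implicit, namely that the old isotropy $\ker(P_1|_G)$ survives the rescaling and that a compact Lie group cannot properly contain a closed subgroup abstractly isomorphic to itself.
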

\begin{proof}
	Argue by contradiction. Assume that there exists an $(X,x^*,G)\in\Omega(\tilde M,\Gamma)$ satisfying (\ref{itm:b2}). Let $h\in\Isom (G)$ such that $P_1(h)=r_b$. Then there exists $r_i\to\infty$ such that, $$(r_i^{-1}X,x^*,h,G)\GH(X,x^*,h_\infty,H).$$
	
	It is obvious that $I(H)=\spa{I(G),h_\infty}$ with $h_\infty\notin I(G)$.
	
	On the other hand, according to Lemma \ref{DimensionofIsotropyGroup}, (\ref{5.10}) holds, and then by Lemma \ref{StabilityOfIsotropy}, $I(G)$ is isomorphic to $I(H)$. This yields a contradiction.
\end{proof}

Combining Lemmas \ref{T2T3}, \ref{(a1)}, \ref{(b2)}, and Corollary \ref{(b1)}, there are only two possibilities of $(\tilde M,\Gamma)$:
\customitemize{P}
	\item \label{itm:P1}  Every $(X,x^*,G)\in\Omega(\tilde M,\Gamma)$ belongs to (\ref{itm:a1}).
	\item \label{itm:P2} Every $(X, x^*, G) \in \Omega(\tilde{M}, \Gamma)$ satisfies $k(X) = 1$ or belongs to either (\ref{itm:a0}) or (\ref{itm:a2}).
\end{enumerate}
Obviously, in the case (\ref{itm:P1}), $G(x^*)$ is isometric to $\R^2$, while in the case (\ref{itm:P2}), $G(x^*)$ is always isometric to $\R^1$. Note that by the connectedness of $\Omega(\tilde M,\Gamma)$ (see, for example, \cite[Lemma A.1]{H24}), (\ref{itm:P1}) and (\ref{itm:P2}) cannot coexist, The proof of Theorem \ref{OrbitisConnected} is complete.

\section{Proof of Theorem \ref{thm-Almost Abelian'} and Corollary \ref{cor-Polarness}}\label{section6}

\subsection{Proof of Theorem \ref{thm-Almost Abelian'} (1)}\label{section6.1}

By Theorem \ref{thm-finite generation}, $\Gamma:=\pi_1(M)$ is finitely generated.

We only need to prove that $\Gamma$ is a crystallographic group of rank $k$ ($1\leq k\leq3$). Once this is proved, then according to the Bieberbach theorems (\cite{Bieb11-1}), $\Gamma$ contains a normal subgroup isomorphic to $\mathbb{Z}^{k}$ of index $C$, where $C$ is a universal constant.
We divide the proof into two steps.

\textbf{Step 1:} we first assume that $\Gamma$ is nilpotent, and we will prove that $\Gamma$ is a finitely generated torsion-free abelian group. That is, $\Gamma$ is isomorphic to $\Z^k$. By \cite{An90-II}, $k\in\{1,2,3\}$.

\begin{claim}\label{claim torsion-free}
	$\Gamma$ is torsion-free.
\end{claim}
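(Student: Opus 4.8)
\textbf{Proof proposal for Claim \ref{claim torsion-free}.}

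The plan is to mimic the argument used for Lemma \ref{Torsion-free} in the previous section, now in the nilpotent (rather than abelian) setting, and again derive a contradiction with the simply connectedness Theorem \ref{SimplyConnectnessOf(n-3)symmetry}. Suppose for contradiction that $\Gamma$ contains a nontrivial torsion element $\gamma$. Since $\Gamma$ is nilpotent and finitely generated, a standard fact is that the set of torsion elements of $\Gamma$ forms a finite normal subgroup $T$; replacing $\gamma$ if necessary, I may as well work with $T$ itself, which is a nontrivial finite normal subgroup. Set $N := \tilde{M}/T$. Then $N$ is an open $4$-manifold with nonnegative Ricci curvature, its universal cover is $\tilde{M}$, which has Euclidean volume growth, and $\pi_1(N) \cong T$ is a nontrivial finite group, so $N$ is not simply connected. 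Moreover $N$ has Euclidean volume growth itself, since $T$ is finite.

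The key point is to produce an asymptotic cone of $N$ that splits an $\R^{4-3} = \R$-factor, so that Theorem \ref{SimplyConnectnessOf(n-3)symmetry} applies and forces $N$ to be simply connected, a contradiction. Here I split into two cases exactly as in Lemma \ref{Torsion-free}. If $\Gamma$ is finitely generated (which it is, by Theorem \ref{thm-finite generation}, so really only this case occurs here — but I will keep the dichotomy for clarity), then $\Gamma/T$ is a finitely generated infinite nilpotent group (infinite because $\Gamma$ is infinite and $T$ is finite), hence contains a nontrivial torsion-free element, and the deck group of $\tilde{M} \to N$ being $T$, the quotient $N$ has $\pi_1(N)=T$ — wait, I need the torsion-free element to act on $N$, which it does not. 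Let me restructure: I should instead consider the intermediate covering. Actually the cleaner route: $\Gamma/T$ embeds as the deck group of the normal covering $N \to M' := \tilde{M}/\Gamma = M$? No — $M = \tilde M/\Gamma$ and $N = \tilde M/T$ with $T \trianglelefteq \Gamma$, so $N \to M$ is a normal covering with deck group $\Gamma/T$, which is infinite, finitely generated, torsion-free nilpotent. By Lemma \ref{Lem17:28}(2) applied to a torsion-free element of $\Gamma/T$ acting on $N$ (using that such an element generates a closed $\Z$-subgroup, which holds in this discrete cocompact-orbit setting), every asymptotic cone of $N$ splits an $\R$-factor; alternatively, if one worries about the closedness hypothesis, invoke Lemma \ref{splitting-lem} with $k=0$: every asymptotic cone of $N$ trivially splits $\R^0$, $\pi_1(N)$... hmm, $\pi_1(N)=T$ is finite not "not finitely generated", so Lemma \ref{splitting-lem} does not directly apply to $N$. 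So the right tool is Lemma \ref{Lem17:28}(2) with the torsion-free element, exactly paralleling the proof of Lemma \ref{Torsion-free}.

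Thus there is an asymptotic cone of $N$ splitting an $\R$-factor; since $\dim N = 4$, this is an $\R^{n-3}$-factor with $n=4$. Applying Theorem \ref{SimplyConnectnessOf(n-3)symmetry} to $N$, we conclude $N$ is simply connected, contradicting $\pi_1(N) \cong T \neq \{e\}$. Hence $\Gamma$ has no nontrivial torsion element, i.e., $\Gamma$ is torsion-free. The main obstacle I anticipate is purely bookkeeping: making sure the torsion elements of the finitely generated nilpotent group $\Gamma$ genuinely form a finite normal subgroup $T$ (so that the quotient $\tilde M/T$ is still a manifold with the right covering-theoretic properties), and verifying the closedness hypothesis in Lemma \ref{Lem17:28}(2) for the chosen torsion-free element of $\Gamma/T$ — but both are routine, the former by a classical result on f.g. nilpotent groups and the latter as in the proof of Lemma \ref{Torsion-free}.
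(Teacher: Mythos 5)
Your overall strategy is the same as the paper's: pass to $N=\tilde M/T$ where $T$ is the (finite, normal) torsion subgroup of the finitely generated nilpotent group $\Gamma$, produce an asymptotic cone of $N$ splitting an $\R^{n-3}$-factor, and invoke Theorem \ref{SimplyConnectnessOf(n-3)symmetry} to force $N$ to be simply connected, contradicting $\pi_1(N)\cong T\neq\{e\}$. The difference is the tool used to produce the extra Euclidean factor: you apply Lemma \ref{Lem17:28}(2) to a single nontrivial (automatically torsion-free) element of the deck group $\Gamma/T$ acting on $N$, exactly as in Lemma \ref{Torsion-free}, whereas the paper applies Lemma \ref{Lem17:28}(1) to the whole group $\Gamma/T$, which is finitely generated with unbounded orbit, together with the observation that every asymptotic cone of $N$ inherits from $M$ an $\R^{n-4}$-splitting on which the limit action of $\Gamma/T$ is trivial; the new non-discrete orbit direction is then transverse to that factor and yields an $\R^{n-3}$-splitting for every $n\ge 4$. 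Your side remarks (that Lemma \ref{splitting-lem} is inapplicable because $\pi_1(N)=T$ is finite, and that closedness of $\spa{\gamma}$ holds since deck groups act properly discontinuously) are correct.

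The one substantive issue is scope: this claim sits inside the proof of Theorem \ref{thm-Almost Abelian'}(1), where $M$ is an $n$-manifold, $n\ge 4$, and one needs an $\R^{n-3}$-splitting, while your argument as written fixes $n=4$ ("$N$ is an open $4$-manifold") and Lemma \ref{Lem17:28}(2) by itself only delivers one $\R$-factor. For $n=4$ that is exactly $\R^{n-3}$ and your proof is complete; for $n>4$ you must add the paper's extra step, namely that every asymptotic cone of $N$ splits the $\R^{n-4}$-factor coming from the hypothesis on $M$ (lifted through the submetry, with the limit $\Gamma/T$-action trivial on that factor), so that the additional line produced by the orbit is transverse and the cone splits $\R^{n-3}$. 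With that supplement your argument matches the paper's in full generality.
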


\begin{proof}[Proof of the Claim \ref{claim torsion-free}: ]
	Since $\Gamma$ is a finitely generated nilpotent group, the torsion subgroup $T$ of $\Gamma$ is a finite normal subgroup, and $\Gamma/T$ is infinite.
	$N:=\tilde{M}/T$ has Euclidean volume growth since $\tilde{M}$ has Euclidean volume growth.
By the assumption on $M$, it is easy to see that every asymptotic cone of $N$ splits an $\R^{n-4}$-factor, and the limit actions of $\Gamma/T$ on the asymptotic cones induce a trivial action on the $\R^{n-4}$-factor.
	Applying Lemma \ref{Lem17:28} (1) to the group action $\Gamma/T$ on $N$, we conclude that any asymptotic cone of $N$ splits an $\R^{n-3}$-factor.
	Then by Theorem \ref{SimplyConnectnessOf(n-3)symmetry}, $N$ is simply connected, and we conclude that $T$ is trivial.
\end{proof}

Before proceeding to the next claim, we first prove a lemma.

\begin{lem}\label{StableSplittingLemma}
	Let $N$ be an open $n$-manifold with nonnegative Ricci curvature with nilpotent $\pi_1(N)$. If the universal cover $\tilde N$ has Euclidean volume growth, and there exists $k$, such that, every asymptotic cone of $N$ splits an $\R^k$-factor and does not split an $\R^{k+1}$-factor, then $\pi_1(N)$ is finitely generated and virtually abelian.
\end{lem}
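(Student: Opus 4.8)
The plan is to reduce Lemma \ref{StableSplittingLemma} to the finite-generation and virtual-abelianness results already available in the excerpt, by performing an induction on the splitting number $k$ combined with the torsion-killing device used in Lemma \ref{Torsion-free} and Claim \ref{claim torsion-free}. Set $\Gamma := \pi_1(N)$ and note that since $\Gamma$ is nilpotent and $\tilde N$ has Euclidean volume growth, every equivariant asymptotic cone of $(\tilde N,\Gamma)$ has the form $(\R^k\times C(\check Z),(0^k,\check z^*),G)$ with $C(\check Z)$ containing no lines and $G$ acting trivially on the $\R^k$-factor (as in the diagram preceding Lemma \ref{splitting-lem}). The first step is finite generation: if $\Gamma$ were not finitely generated, then by Lemma \ref{splitting-lem} some asymptotic cone of $N$ would split an $\R^{k+1}$-factor, directly contradicting the hypothesis. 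Hence $\Gamma$ is finitely generated, and by Gromov's theorem (or the Milnor--Wolf setting) finitely generated nilpotent. (Strictly, one should note that Lemma \ref{splitting-lem} is stated for "every asymptotic cone splits $\R^k$"—which is exactly our hypothesis—so it applies verbatim.)

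For virtual abelianness, I would argue by contradiction on the nilpotency class, or more efficiently kill the torsion and then invoke a structural dichotomy. Concretely: let $T$ be the torsion subgroup of the finitely generated nilpotent group $\Gamma$; it is a finite normal subgroup. Pass to $N' := \tilde N/T$, which still has Euclidean universal cover $\tilde N$, and whose asymptotic cones still split exactly an $\R^k$-factor (the finite-index-at-infinity group $T$ contributes nothing to the blow-down geometry, and $\Gamma/T$ still acts trivially on the $\R^k$-factor in every limit). Applying Lemma \ref{Lem17:28}(1) to the action of $\Gamma/T$ on $N'$—which is infinite if $\Gamma$ is infinite—would force an extra $\R$-split, contradicting the hypothesis \emph{unless} $k$ is already maximal in a suitable sense; this is precisely the mechanism that needs care. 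The cleanest route is: if $\Gamma$ is infinite, then $\Gamma/T$ is infinite finitely generated nilpotent, hence contains a nontrivial torsion-free central element $\gamma$; Lemma \ref{Lem17:28}(2) then gives that every asymptotic cone of $N'$ splits an $\R$-factor equivariantly, and since the $\R^k$ already present is "used up" by the construction one obtains an $\R^{k+1}$ split in some asymptotic cone of $N'$, hence (lifting lines through the submetry $\tilde N \to \tilde N/T \to N$, as at the end of the proof of Lemma \ref{splitting-lem}) of $N$—contradiction. Therefore $\Gamma$ must be finite, or $\Gamma = T$ is finite, so in that case $\Gamma$ is trivially virtually abelian; and if $\Gamma$ is infinite one runs the argument one dimension down on the quotient $\tilde N/\langle\gamma\rangle$ to peel off free abelian factors.

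More honestly, I expect the right formulation is an induction: the base case $k$ such that no asymptotic cone splits is handled by the above (torsion-killing plus Lemma \ref{Lem17:28}(1) forces $\Gamma$ finite once we cannot split further), and the inductive step extracts a central $\Z = \langle\gamma\rangle$ using Lemma \ref{Lem17:28}(2), forms $N_1 := \tilde N/\langle\gamma\rangle$, observes via the submetry/line-lifting argument (exactly as in the last paragraph of the proof of Lemma \ref{splitting-lem}) that every asymptotic cone of $N_1$ splits an $\R^{k-1}$-factor and none splits $\R^k$, whence $\pi_1(N_1) = \Gamma/\langle\gamma\rangle$ is finitely generated and virtually abelian by the inductive hypothesis, and finally notes that a central extension of a virtually abelian group by $\Z$ with the whole group nilpotent is again virtually abelian (a nilpotent group that is virtually $\Z^m$ is virtually abelian—one uses that its center has finite index over a finite-index abelian subgroup, or the classification of f.g. torsion-free nilpotent groups of Hirsch length $\le$ the ambient, but the simplest is: $\Gamma$ f.g. nilpotent with a finite-index subgroup $\Z^m$ implies $\Gamma$ is itself virtually $\Z^m$, hence virtually abelian). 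The main obstacle is the bookkeeping in the inductive step: verifying rigorously that passing to $\tilde N/\langle\gamma\rangle$ \emph{decreases} the stable splitting number by exactly one and does not accidentally destroy Euclidean volume growth or introduce new splittings—this is where one must carefully combine the submetry line-lifting (lines downstairs lift, as in Lemma \ref{splitting-lem}) with the fact that $C(\check Z)$ contains no lines to prevent over-counting, and the genuinely delicate point is ensuring the "does not split $\R^{k}$" upper bound is preserved, which amounts to showing the $\langle\gamma\rangle$-orbit direction was genuinely one of the $k$ splitting directions and not an extra one.
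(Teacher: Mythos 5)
Your finite-generation step is exactly the paper's: it follows immediately from Lemma \ref{splitting-lem}, since a non-finitely-generated nilpotent $\pi_1(N)$ would force some asymptotic cone of $N$ to split an $\R^{k+1}$-factor. The virtual-abelianness half, however, has genuine gaps at precisely the points you flagged. First, the algebraic glue of your induction is false: knowing that $\Gamma/\langle\gamma\rangle$ is virtually abelian and that $\gamma$ is central does \emph{not} make the nilpotent group $\Gamma$ virtually abelian --- the discrete Heisenberg group is a central extension of $\Z^2$ by $\Z$, is nilpotent and torsion-free, yet is not virtually abelian. (The fact you cite, that a finitely generated nilpotent group containing a finite-index $\Z^m$ is virtually abelian, is trivially true but is not what the induction delivers.) Ruling out Heisenberg-type behavior is exactly the geometric content of the lemma, so it cannot be obtained by purely group-theoretic bookkeeping on quotients. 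Second, your transfer of splittings between covers and the base goes in the wrong direction. The submetry argument at the end of Lemma \ref{splitting-lem} lifts lines from an asymptotic cone of the \emph{base} to an asymptotic cone of the \emph{cover}; an extra $\R$-factor in a cone of $N'=\tilde N/T$ or of $\tilde N/\langle\gamma\rangle$ does not descend to a cone of $N$ (indeed the paper's own diagrams show cones of $\tilde N$ splitting $\R^{k+l}$ while the cone of $N$ splits only $\R^k$). Hence neither your claimed contradiction for $N'$ nor the claim that passing to $\tilde N/\langle\gamma\rangle$ decreases the stable splitting number by exactly one is justified; cones of any intermediate cover of $N$ split \emph{at least} $\R^k$, not $\R^{k-1}$.

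For comparison, the paper avoids all of this: it proves a sublemma that for every $(Y,y,G)\in\Omega(\tilde N,\pi_1(N))$ the orbit $G(y)$ is isometric to $\R^s\times\{x\}$, by writing the cone as $\R^k\times\R^l\times\bar X$ with $\bar X$ lineless, using the hypothesis that no cone of $N$ splits $\R^{k+1}$ to see that tangent cones of the base factor split no $\R$-factor, and then applying \cite[Lemma 3.1]{H24} to the nilpotent limit group; virtual abelianness then follows from \cite[Theorem A, Proposition 4.2]{Pan22}. If you want to salvage your approach you would need a substitute for that orbit-connectedness input, since the central-extension step alone cannot distinguish $\Z^3$ from the Heisenberg group.
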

\begin{proof}
	
	The finite generation of $\pi_1(N)$ is implied by Lemma \ref{splitting-lem}. For the virtual abelianness of $\pi_1(N)$, we need the following sublemma.
	\begin{slem}\label{slem5.6}
		Under the assumption of Lemma \ref{StableSplittingLemma}, there exists an integer $s\in[k,n]$ such that for any $(Y,y,G)\in\Omega(\tilde N,\pi_1(N))$, $(Y,y,G(y))$ is isometric to $(\R^s\times X,(0^s,x),\R^s\times\{x\})$ for some $X$.
	\end{slem}
	\begin{proof}

Under the assumption of Lemma \ref{StableSplittingLemma}, every asymptotic cone of $\tilde{N}$ splits an $\R^k$-factor.
For any $r_i\to\infty$, up to a subsequence, we may assume the following diagram holds,
		
		\begin{equation*}
			\xymatrix{
				(r_i^{-1}\tilde N,\tilde p,\pi_1(N)) \ar[rr]^{GH}\ar[d]_{}&&(\R^k\times \R^l\times \bar X,(0^k,0^l,\bar x),G)\ar[d]^{} \\
				(r_i^{-1}N,p)\ar[rr]^{GH}&  & (\R^k\times \underline{X},(0^k,\underline{x})),}
		\end{equation*}
		where both $\bar X$ and $\underline{X}$ are metric cones which splits no $\R$-factor, and $G$ acts on the $\R^k$-factor trivially. Since every asymptotic cone of $N$ does not split $\R^{k+1}$-factor, every tangent cone of $\underline X$ at $\underline x$ splits no $\R$-factor. Since $\pi_1(N)$ is nilpotent, so is $G$. Now applying \cite[Lemma 3.1]{H24} to $(\R^l\times\overline{X},(0^l,\overline x),G)$ yields that $(Y,y,G(y))$ is isometric to $(\R^s\times X,(0^s,x),\R^s\times\{x\})$ for some $s$ and $X$. The fact that $s$ does not depend on the choice of $(Y,y,G)$ is implied by the connectedness of $\Omega(\tilde N,\pi_1(N))$ (for example, see \cite[Lemma A.1]{H24}).
	\end{proof}
The virtual abelianness of $\pi_1(N)$ follows by Sublemma \ref{slem5.6} and \cite[Theorem A, Proposition 4.2]{Pan22}.
\end{proof}

\begin{claim}\label{claim3.3}
	$\Gamma$ is virtually abelian.
\end{claim}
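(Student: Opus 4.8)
The plan is to obtain Claim~\ref{claim3.3} as an application of Lemma~\ref{StableSplittingLemma}; the real work is to verify that lemma's hypothesis for $M$ itself, i.e.\ to produce an integer $k$ such that every asymptotic cone of $M$ splits an $\R^{k}$-factor while none splits an $\R^{k+1}$-factor. At this point we already know that $\Gamma=\pi_{1}(M)$ is finitely generated (Theorem~\ref{thm-finite generation}), infinite (the standing hypothesis of Theorem~\ref{thm-Almost Abelian'}(1)), nilpotent, and torsion-free (Claim~\ref{claim torsion-free}), that $\tilde M$ has Euclidean volume growth, and that every asymptotic cone of $M$ splits an $\R^{n-4}$-factor. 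Hence
\[
k:=\max\bigl\{\,m\in\Z_{\geq 0}:\ \text{every asymptotic cone of }M\text{ splits an }\R^{m}\text{-factor}\,\bigr\}
\]
is well defined with $n-4\leq k\leq n$, the upper bound holding because an asymptotic cone of $M$ is an $n$-dimensional metric cone and so cannot contain an isometric copy of $\R^{n+1}$. By the choice of $k$ there is at least one asymptotic cone of $M$ failing to split an $\R^{k+1}$-factor.

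The crux is to promote this to: \emph{no} asymptotic cone of $M$ splits an $\R^{k+1}$-factor. I would argue by contradiction, running the critical rescaling scheme already used in Section~\ref{section4} and in the proof of Lemma~\ref{(a1)}. Suppose some asymptotic cone of $M$ taken along scales $s_{i}\to\infty$ splits an $\R^{k+1}$-factor, while along some scales $r_{i}\to\infty$ the asymptotic cone of $M$ has maximal Euclidean factor exactly $\R^{k}$. Lift everything to $\tilde M$ so as to carry along the $\Gamma$-action; since $\tilde M$ has Euclidean volume growth all limits of rescalings of $\tilde M$ are metric cones, and any line downstairs lifts through the limiting submetry. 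Interpolating between $r_{i}$ and $s_{i}$, consider the set of intermediate scales at which the rescaled picture is $\epsilon$-close, in the equivariant pointed Gromov--Hausdorff sense, to an element of $\Omega(\tilde M,\Gamma)$ whose associated asymptotic cone of $M$ splits an $\R^{k+1}$-factor; choose a boundary scale $\lambda_{i}$ of this set and pass to the equivariant tangent cone of the limit at the basepoint. A metric cone equals its own tangent cone at the vertex, and the splitting of one more $\R$-factor, being the splitting of a line through the basepoint of a cone, persists under this blow-up; combined with a gap statement in the spirit of Sublemma~\ref{GapLemma} --- which I would establish here from the non-collapsed $\RCD$ geometry of the cross sections together with the convergence theorems of \cite{BPS24,MRW08,PR18}, just as in Section~\ref{section4} but now detecting a splitting line in place of an isotropy subgroup --- the boundary scale is pushed to $+\infty$, which is impossible. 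So the extra splitting, if present at one scale, is present at all large scales, contradicting the maximality of $k$. (An alternative route, should this prove cumbersome, is to descend along the lower central series, applying Theorem~\ref{OrbitisConnected} to $\tilde M/[\Gamma,\Gamma]$ and inducting on nilpotency class; but the passage from ``$[\Gamma,\Gamma]$ and $\Gamma^{\mathrm{ab}}$ behave well'' back to ``$\Gamma$ is virtually abelian'' requires the same equivariant-cone analysis and I do not expect it to be shorter.)

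With $k$ thus uniform, $M$ meets every hypothesis of Lemma~\ref{StableSplittingLemma}, which therefore gives that $\Gamma=\pi_{1}(M)$ is (finitely generated and) virtually abelian --- this is Claim~\ref{claim3.3}. (Combined with Claim~\ref{claim torsion-free}: a finitely generated torsion-free nilpotent group that is virtually abelian must already be abelian, by comparing its polynomial growth degree with its Hirsch length, so in fact $\Gamma\cong\Z^{k_{0}}$, completing Step~1.)

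The step I expect to be the main obstacle is precisely the uniformity of the splitting dimension. The splitting dimension is only upper semicontinuous along Gromov--Hausdorff convergence --- a limit may split off more Euclidean factors than the approximants, never fewer --- so connectedness of $\Omega(\tilde M,\Gamma)$ does not by itself transport an $\R^{k+1}$-splitting from one asymptotic cone to all of them; one genuinely needs the quantitative ``gap at the critical scale'' phenomenon, adapted from the isotropy-group setting of Section~\ref{section4} to the detection of splitting lines.
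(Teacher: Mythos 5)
Your reduction to Lemma~\ref{StableSplittingLemma} hinges entirely on the uniformity claim that \emph{no} asymptotic cone of $M$ splits an $\R^{k+1}$-factor once one of them fails to, and this is exactly the step you leave unproved: you defer it to a ``gap statement in the spirit of Sublemma~\ref{GapLemma}'' that you do not establish, and it is doubtful that such a gap exists in the form you need. In the isotropy-group argument of Section~\ref{section4}, and likewise in Lemma~\ref{(a1)}, the critical-rescaling scheme works only because Lemma~\ref{T2T3} supplies genuine rigidity: the cross-section there is forced to be $\R^2\times C(S^1_r)$ with the radius $r$ pinned down by the volume growth rate of $\tilde M$, so being $0.1\eta$-close to a model of type (\ref{itm:a1}) forces an actual isometry of the underlying cones. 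For the asymptotic cones of $M$ itself (which are quotient cones, since $\Gamma$ is infinite and $M$ is collapsed at infinity) there is no analogous rigidity available: splitting an extra line is not an open condition and admits no quantitative gap in general --- e.g.\ $C(S^1_r)$ with $r$ close to $1$ is Gromov--Hausdorff close to $\R^2$ on any bounded scale without splitting --- so ``close to a cone that splits $\R^{k+1}$'' does not imply ``splits $\R^{k+1}$'', and your critical-scale argument has nothing to terminate on. As written, the central step of your proof is therefore a genuine gap, not a routine adaptation.

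The paper avoids the uniformity question altogether by a two-case argument. If no asymptotic cone of $M$ splits an $\R^{n-3}$-factor, then (with $k=n-4$, using the standing hypothesis of Theorem~\ref{thm-Almost Abelian'}(1)) Lemma~\ref{StableSplittingLemma} applies directly --- this is the only place that lemma is used, and it needs no uniformization beyond the case hypothesis. If instead \emph{some} asymptotic cone of $M$, along scales $r_i$, splits an $\R^{n-3}$-factor, the paper works only along that sequence: it takes the center $H$ of the nilpotent group $\Gamma$ (infinite, finitely generated, torsion-free by Claim~\ref{claim torsion-free}), passes to $\hat M=\tilde M/H$, and shows via Lemma~\ref{Lem17:28} that the limit of $(r_i^{-1}\tilde M,H)$ is $\R^{n-3}\times X$ with $X\cong\R\times C(S^1_r)$ and $H_\infty$ acting transitively on the $\R$-factor; hence $X/H_\infty$ is a cone with no line, the limit of $\Gamma/H$ fixes its vertex, and Lemma~\ref{Lem17:28}(1) forces $\Gamma/H$ to be finite, so $\Gamma$ is virtually abelian. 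If you wish to salvage your route, you would either have to prove the uniformity of the splitting dimension by some new rigidity input, or abandon it in favor of a scale-specific argument of this kind.
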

\begin{proof}
	By the assumptions on $M$ and $\Gamma$, if every asymptotic cone of $M$ does not split $\R^{n-3}$-factor, then according to Lemma \ref{StableSplittingLemma}, $\Gamma$ is virtually abelian.
	
	Hence in the following we assume that, there exists a sequence of $r_{i}\rightarrow\infty$ such that $(r_{i}^{-1}M,p) \GH (\R^{n-3}\times Y,(0,y^*))$.
	It is not hard to see that
	\begin{itemize}
		\item [(R)]\label{Splits}	Any normal covering $(\bar{M},\bar p ,\bar\Gamma)$ of $(M,p)$, up to a subsequence of $i$, the equivariant Gromov-Hausdorff limit of $(r_{i}^{-1}\bar{M},\bar p,\bar\Gamma)$ splits an $\R^{n-3}$-factor, on which the limit group acts trivially.

	\end{itemize}
	
	Since $\Gamma$ is nilpotent, the center $H$ of $\Gamma$ is not trivial.
	By Claim \ref{claim torsion-free}, $H$ is an infinite, finitely generated, torsion-free abelian group.

	It suffices to show $\Gamma/H$ is finite. Considering the $\Gamma/H$-action on $\hat{M}:=\tilde{M}/H$, by (R), after passing to a subsequence, we have the following commutative diagram of equivariant Gromov-Hausdorff convergence,

	\begin{align*}
		\xymatrix@C=2.5cm{
			(r_{i}^{-1}\tilde{M}, \tilde{p}, H) \ar[d]_{} \ar[r]^{GH} & (\R^{n-3}\times X, (0^{n-3},x^{*}), H_{\infty}) \ar[d]^{} \\
			(r_{i}^{-1}\hat{M}, \hat{p}, \Gamma/H) \ar[r]^{GH} & (\R^{n-3}\times (X/H_{\infty}),(0^{n-3},\hat x),K_\infty),  }
	\end{align*}
	where $X$ is a $3$-dimensional metric cone with vertex $x^{*}$.
	By (R), $H_\infty$ and $K_\infty$ only act on the $X$-factor and $X/H_\infty$-factor respectively.
	According to Lemma \ref{Lem17:28}, we conclude that $X$ splits an $\R$-factor. Consequently, $X$ is isometric to $\R\times C(S^{1}_{r})$, where $S^{1}_{r}$ is a circle with radius $r\in(0, 1]$.
	The case of $r=1$ will implies that $\R^{n-3}\times X$ is isometric to $\R^{n}$, then by Colding's rigidity theorem on maximal volume growth (\cite{Col97}), $\tilde{M}$ is isometric to $\R^{n}$, and in this case the conclusion is classical.
	Hence in the following we assume $r\in(0, 1)$. By Lemma \ref{Lem17:28}, $H_{\infty}$ acts transitively on the $\R$-factor of $X$.
	Thus $(X/H_{\infty},\hat x)$ is a metric cone with the vertex $\hat x$, which contains no line.
	So $K_\infty(\hat x) = \{\hat x\}$. Since $\Gamma/H$ is finitely generated, according to Lemma \ref{Lem17:28} (1), $\Gamma/H$ is finite.
	This completes the proof of Claim \ref{claim3.3}.
\end{proof}

Combining Claims \ref{claim torsion-free} and \ref{claim3.3} with the following algebraic lemma, we conclude that $\Gamma$ is free abelian.
This completes Step 1.

\begin{lem}\label{lem3.5}
	Let $G$ be a virtually abelian torsion-free nilpotent group, then $G$ is abelian.
\end{lem}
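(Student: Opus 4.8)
\textbf{Proof proposal for Lemma \ref{lem3.5}.}

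The plan is to use the structure theory of finitely generated nilpotent groups together with the fact that, in such a group, the torsion elements form a finite characteristic subgroup and the quotient is torsion-free nilpotent; since we are already assuming $G$ is torsion-free, this gives us a clean upper and lower central series with torsion-free abelian factors. First I would observe that a virtually abelian group contains a finite-index \emph{normal} abelian subgroup $A$ (replace a finite-index abelian subgroup by the intersection of its finitely many conjugates, which is still abelian, finite-index, and now normal). Since $G$ is finitely generated and virtually abelian it is finitely generated nilpotent, so $A\cong\Z^k$ for some $k$ (it is torsion-free abelian and finitely generated, being finite-index in a finitely generated group).

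Next I would argue by induction on the nilpotency class $c$ of $G$, the case $c\le 1$ being trivial. Consider the center $Z(G)$; because $G$ is torsion-free nilpotent, $G/Z(G)$ is again torsion-free nilpotent (this is a standard fact: in a torsion-free nilpotent group the quotient by the center is torsion-free — one checks that if $g^m\in Z(G)$ then $g\in Z(G)$, using that centralizers are isolated). Moreover $G/Z(G)$ is still virtually abelian, with nilpotency class $c-1$, so by the inductive hypothesis $G/Z(G)$ is abelian, i.e.\ $[G,G]\subseteq Z(G)$. Thus $G$ is nilpotent of class $\le 2$ and it remains to handle that case.

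For the class-$2$ case the commutator map induces a well-defined alternating bilinear pairing $G/Z(G)\times G/Z(G)\to [G,G]\subseteq Z(G)$. Here $G/Z(G)$ is torsion-free abelian and finitely generated, hence $\cong\Z^m$, and the finite-index normal subgroup $A\cong\Z^k$ projects to a finite-index subgroup of $G/Z(G)$, so it must be that the image of $A$ is a finite-index subgroup of $\Z^m$. Since $A$ is abelian, the pairing vanishes on (the image of) $A\times A$; but a finite-index subgroup of $\Z^m$ is Zariski-dense / spans $\Z^m$ rationally, so an alternating bilinear form on $\Z^m$ vanishing on a finite-index subgroup vanishes identically (if $B(a,b)=0$ for all $a,b$ in a subgroup of index $N$, then $N^2 B(x,y)=B(Nx,Ny)=0$ for all $x,y$, and $[G,G]\cong\Z^{r}$ is torsion-free, forcing $B\equiv 0$). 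Hence $[G,G]=0$ and $G$ is abelian.

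The main obstacle I expect is the bookkeeping around the two ``torsion-freeness of quotients'' facts — namely that $G/Z(G)$ stays torsion-free and that $[G,G]$ is torsion-free — and making sure the induction on nilpotency class is set up so that the virtually-abelian hypothesis is genuinely inherited by $G/Z(G)$ (which it is, since the image of a finite-index abelian subgroup is finite-index abelian). Everything else is the routine linear-algebra observation that an alternating form on $\Z^m$ killed by a finite-index subgroup is zero. An alternative, perhaps shorter, route would be to invoke directly that a finitely generated torsion-free nilpotent group embeds in upper triangular unipotent integer matrices and has a unique maximal finite-index ``abelian core,'' but the commutator-pairing argument above is self-contained and I would present that one.
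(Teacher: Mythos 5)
Your argument is correct, but it takes a genuinely different route from the paper's. The paper's proof is essentially two lines: pick a normal abelian subgroup $H$ of finite index, note that $g_1^m,g_2^n\in H$ commute for suitable $m,n\ge 1$, and invoke the standard fact (cited as Exercise 16.2.9 in Kargapolov--Merzljakov) that in a torsion-free nilpotent group commuting powers force commuting elements, so $g_1g_2=g_2g_1$. You instead induct on the nilpotency class, using the (also standard) fact that $G/Z(G)$ is torsion-free when $G$ is torsion-free nilpotent to reduce to class $2$, and then kill the alternating commutator pairing on $G/Z(G)$ because it vanishes on the finite-index image of $A$ and takes values in the torsion-free group $[G,G]$ (the $N^2B(x,y)=0$ trick). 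The trade-off: the paper gets a one-step proof by quoting the stronger isolator-type result, while your argument is nearly self-contained, resting only on the weaker fact about $G/Z(G)$ (which is itself a special case of the same isolation-of-centralizers phenomenon you would otherwise cite). One small caveat: you assume $G$ is finitely generated (to write $A\cong\Z^k$, $G/Z(G)\cong\Z^m$, $[G,G]\cong\Z^r$), which is not among the lemma's hypotheses; fortunately it is also not needed, since your parenthetical computation only uses that the image of $A$ has finite index $N$ in $G/Z(G)$ and that $[G,G]$ is torsion-free (and in the paper's application the group is finitely generated anyway). Stripping those identifications makes your proof match the stated generality.
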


Lemma \ref{lem3.5} is known for experts for group theory.
We include its proof in appendix for readers' convenience.

\textbf{Step 2:} We consider the general case that $\Gamma:=\pi_{1}(M)$ is infinite.
Since $\Gamma$ is finitely generated, according to Corollary 4 of \cite{KW11}, $\Gamma$ contains a nilpotent normal subgroup, denoted by $H$, of finite index.
Let $\hat{M}=\tilde{M}/H$, then by Step 1, we conclude that $H$ is isomorphic to $\mathbb{Z}^{k}$ ($1\leq k\leq3$).
Thus $\Gamma$ is virtually $\mathbb{Z}^{k}$.
By a characterization of virtually $\mathbb{Z}^{k}$ groups (see \cite[Theorem 2.1]{Wil00}), there exist a finite group $E$ and a crystallographic group $G$ such that the following short exact sequence holds:
\begin{align*}
	\{e\}\rightarrow E \rightarrow \Gamma\rightarrow G\rightarrow\{e\}.
\end{align*}
Then $\bar{M}=\tilde{M}/E$, which has Euclidean volume growth, is a covering of $M$ with deck transformation $\Gamma/E\cong G$.
By the assumption on $M$ and Lemma \ref{Lem17:28}, every asymptotic cone of $\bar M$ splits an $\R^{n-3}$-factor.
Then by Theorem \ref{SimplyConnectnessOf(n-3)symmetry}, $\bar{M}$ is simply connected.
Thus $E$ is trivial, and hence $\Gamma$ is isomorphic to the crystallographic group $G$.
The proof is complete.

\subsection{Proof of Theorem \ref{thm-Almost Abelian'} (2)}\label{section6.2}

We need the following lemma of stability of finite group actions.

\begin{lem}\label{StabilityOfFiniteGroupsAction}
	Let $(M_i,p_i)$ be a sequence of open $n$-manifolds with $\Ric_{M_i}\ge-(n-1)$ and $\vol(B_1(p_i))>v>0$, and $\pi_i:(\check M_i,\check p_i)\to(M_i,p_i)$ be a normal covering with deck transformation $\Gamma_i$. Suppose that the following commutative diagram of equivariant Gromov-Hausdorff convergence holds,
	
	\begin{equation*}
		\xymatrix@C=2.5cm{
			(\check{M}_i, \check{p}_i,\Gamma_i) \ar[d]_{\pi_i} \ar[r]^{GH} &(Y,y_0,G) \ar[d]^{\pi} \\
			(M_i, p_i) \ar[r]^{GH} &
			(X,x_0).}
	\end{equation*}
	Let the triple of maps $f_i:\left(B_{\epsilon_i^{-1}}(\check p_i),\check p_i\right)\to\left( B_{\epsilon_i^{-1}+\epsilon_i}(y_0),y_0\right)$, $\phi_i:\Gamma_i\to G$, $\psi_i:G\to\Gamma_i$ be an $\epsilon_i$-equivariant pointed Gromov-Hausdorff approximation, where $\epsilon_i\to0$. If \begin{equation}\label{1:30}
		\sup\diam(\Gamma_i(\check p_i))<\infty,
	\end{equation}
	then for all large $i$, $\phi_i,\psi_i$ are both isomorphisms with $\phi_i=\psi_i^{-1}$.
\end{lem}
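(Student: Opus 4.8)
The plan is to exploit the assumption \eqref{1:30}, which forces the limit orbit $G(y_0)$ to be \emph{bounded}, hence (since $G$ is a Lie group acting on a non-collapsed limit space) the limit group $G$ is \emph{compact}. Indeed, $\diam(\Gamma_i(\check p_i))$ is uniformly bounded, and under equivariant Gromov--Hausdorff convergence the diameter of the limit orbit $G(y_0)$ is at most $\liminf_i \diam(\Gamma_i(\check p_i)) < \infty$; a closed subgroup of $\Isom(Y)$ with a bounded orbit is compact. The key consequence is that the groups $\Gamma_i$ have a \emph{uniform bound on the number of generators and on their ``size''} at a definite scale: by the structure theory for fundamental groups (or deck groups) under a lower Ricci bound and non-collapsing, together with the convergence $\phi_i,\psi_i$ to the compact limit group $G$, the $\Gamma_i$ are, for large $i$, all quotients/subquotients controlled by $G$. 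In particular their orders (if finite) or their algebraic structure stabilize.

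The core of the argument is then the following. Since $\phi_i\colon \Gamma_i\to G$ and $\psi_i\colon G\to \Gamma_i$ form an $\epsilon_i$-equivariant approximation with $\epsilon_i\to 0$, for elements lying in a fixed bounded region one has, for all large $i$, the near-multiplicativity relations
\[
d\bigl(\phi_i(\gamma\gamma'),\phi_i(\gamma)\phi_i(\gamma')\bigr)\le \epsilon_i,\qquad
d\bigl(\psi_i(g g'),\psi_i(g)\psi_i(g')\bigr)\le \epsilon_i,
\]
and $\phi_i\circ\psi_i$, $\psi_i\circ\phi_i$ are $\epsilon_i$-close to the respective identity maps (in the sense of their actions on $B_{\epsilon_i^{-1}}$). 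Because $G$ is compact and acts effectively, there is a definite lower bound $\delta>0$ on the displacement $\max_{B_1(y_0)} d(g\cdot,\cdot)$ of any nontrivial $g\in G$ that one cares about; similarly, by the non-collapsing and the Ricci bound, nontrivial elements of $\Gamma_i$ that move $\check p_i$ a bounded amount have a definite displacement on $B_1(\check p_i)$ — this is where one invokes the earlier lemmas (the covering lemma \cite[Lemma 1.6]{KW11} and the Gromov-type bounds) to control short loops. Once one has such a uniform ``margin'' $\delta$, the near-homomorphism relations upgrade to genuine ones for $i$ large: $\phi_i$ and $\psi_i$ become honest group homomorphisms, $\phi_i$ is injective (a nontrivial $\gamma$ cannot map to something $\epsilon_i$-close to $e$ once $\epsilon_i<\delta$), $\psi_i$ is injective for the same reason, and $\phi_i\circ\psi_i=\id_G$, $\psi_i\circ\phi_i=\id_{\Gamma_i}$ because both composites are $\epsilon_i$-close to the identity and both sides are genuine automorphisms of groups with the margin property. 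Hence $\phi_i=\psi_i^{-1}$ is an isomorphism.

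I would carry this out in the following order: (i) show $G$ is compact using \eqref{1:30}; (ii) produce the uniform displacement margin $\delta$ for nontrivial elements of $\Gamma_i$ (bounded-orbit elements) and of $G$, citing the non-collapsed Ricci-limit machinery already used in Section~\ref{section3}; (iii) translate the $\epsilon_i$-equivariance of $(f_i,\phi_i,\psi_i)$ into quantitative near-multiplicativity on the relevant bounded sets; (iv) combine (ii) and (iii) to conclude that $\phi_i,\psi_i$ are homomorphisms, injective, and mutually inverse for all large $i$. The main obstacle is step (ii): one must rule out that $\Gamma_i$ contains, for arbitrarily large $i$, nontrivial elements whose displacement on $B_1(\check p_i)$ shrinks to zero while $\gamma$ still has bounded orbit — in other words, a ``collapsing direction'' in the deck group. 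This is precisely excluded by the non-collapsing hypothesis $\vol(B_1(p_i))>v>0$ together with the fact that $\check M_i\to Y$ non-collapses (pull-back of volume under the covering), so that no sequence of nontrivial isometries can converge to the identity without the orbit escaping to infinity; formally this is the statement that the limit $G$ is a genuine (effective) Lie group action and that the convergence $\Gamma_i\to G$ is ``without collapse of the group,'' which is standard once compactness of $G$ is in hand. Everything else is bookkeeping with $\epsilon_i\to0$.
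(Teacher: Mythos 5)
Your outline has the right skeleton (near-multiplicativity of $\psi_i,\phi_i$ plus a displacement gap forces them to be genuine, mutually inverse homomorphisms — this is how the paper argues too), but the two quantitative inputs on which everything hinges are missing or misattributed, and one of your key assertions is false as stated. First, you never establish that the $\Gamma_i$ are finite with a \emph{uniform} bound on their order; you only say their ``orders (if finite) or algebraic structure stabilize'' by appeal to unnamed structure theory. The paper gets this in one line from \eqref{1:30} and the non-collapsing of the base: since $\diam(\Gamma_i(\check p_i))\le D$, one has $\pi_i^{-1}(B_1(p_i))\subseteq B_{1+D}(\check p_i)$, hence $\#\Gamma_i\cdot\vol(B_1(p_i))=\vol(\pi_i^{-1}(B_1(p_i)))\le \vol(B_{1+D}(\check p_i))\le C(n,D)$, so $\#\Gamma_i\le C(n,D)/v$. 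Without this counting step your argument collapses: an element $\epsilon_i$-close to the identity could generate a subgroup with large orbits (think of a rotation by a tiny angle), and the near-homomorphism defect cannot be killed.

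Second, your ``margin'' step (ii), which you yourself flag as the main obstacle, is not justified by what you invoke. Compactness of $G$ (which does follow from \eqref{1:30}) does \emph{not} give a lower bound on the displacement of nontrivial elements of $G$: a circle action is compact and has nontrivial elements of arbitrarily small displacement. One needs $G$ (and $\Gamma_i$) to be finite of uniformly bounded order, which again requires the volume-counting bound above. Likewise, the statement that a nontrivial deck transformation with bounded orbit has a definite displacement on $B_1(\check p_i)$ is exactly the gap phenomenon that must be proved; the tool is not the covering lemma \cite[Lemma 1.6]{KW11} (that lemma is about lifting splitting maps and plays no role here), but the small-isometric-action gap theorem \cite[Theorem 0.8]{PR18} applied on the non-collapsed covers $\check M_i$ (note $\vol(B_1(\check p_i))\ge\vol(B_1(p_i))\ge v$): the paper applies it to the subgroup generated by the defect element $\psi_i(\gamma_1\gamma_2)^{-1}\psi_i(\gamma_1)\psi_i(\gamma_2)$, whose orbits in $B_1(\check p_i)$ have diameter $\le 4C(n,D)v^{-1}\epsilon_i$ precisely because the order is uniformly bounded, and then to $\psi_i\circ\phi_i$ (resp. $\phi_i\circ\psi_i$) composed with inverses to get $\phi_i=\psi_i^{-1}$. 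So the gap in your proposal is concrete: without the uniform order bound and the correct gap theorem, steps (ii) and (iv) do not go through, and the justification you offer for (ii) (``standard once compactness of $G$ is in hand'') is not correct.
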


Lemma \ref{StabilityOfFiniteGroupsAction} is well-known among experts. We have included its proof in the appendix for the reader's convenience.

We begin the proof of Theorem \ref{thm-Almost Abelian'}. By the assumptions and Cheeger-Colding's theory, there exists a sequence of $r_{i}\to\infty$ such that the following commutative diagram of equivariant Gromov-Hausdorff convergence holds,
\begin{align}\label{diag2.5}
	\xymatrix@C=2.5cm{
		(r_{i}^{-1}\tilde{M}, \tilde{p}, \Gamma) \ar[d]_{} \ar[r]^{GH} & (\R^{n-4}\times C(X), (0^{n-4},x^{*}), G) \ar[d]^{} \\
		(r_{i}^{-1}M, p) \ar[r]^{GH} & (\R^{n-4}\times C(Y),(0^{n-4},y^{*})),  }
\end{align}
where $\Gamma:=\pi_1(M)$, and, $C(X)$ and $C(Y)$ are metric cones with vertex $x^{*}$ and $y^{*}$ respectively and $G$ acts on the $\R^{n-4}$-factor trivially.

Since $\Gamma$ is finite, it is easy to see $G(x^{*})=\{x^{*}\}$, and hence $G$ induces an isometric action on $X$, with $X/G=Y$.

For simplicity of notions, put $(M_i,p_i):=(r_i^{-1}M,p)$ and $(\tilde M_i,\tilde p_i):=(r_i^{-1}\tilde M,\tilde p)$.

By Lemma \ref{StabilityOfFiniteGroupsAction}, there exist $\epsilon_i\to0$, and a triple of maps $f_{i}:B_{\epsilon_i^{-1}}(\tilde p_i)\to B_{\epsilon_i^{-1}+\epsilon_i}(0^{n-4},x^*)$, $\psi_i^{-1}:\Gamma\to G$, $\psi_i:G\to\Gamma$ which is an $\epsilon_i$-equivariant Gromov-Hausdorff approximation, such that $\psi_i$ is an isomorphism. Since $\Gamma$ and $G$ are both finite, up to a subsequence, we may assume $\psi_i\equiv\psi:G\to\Gamma$ for every $i$.

\begin{claim}\label{ClaimFreeAction}
	The $G$-action on $X$ is free. In particular, the quotient $\pi:X\rightarrow Y=X/G$ is a normal cover.
\end{claim}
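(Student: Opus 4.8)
\textbf{Proof proposal for Claim \ref{ClaimFreeAction}.} The plan is to rule out the existence of a nontrivial element $g \in G$ fixing some point $q \in X \setminus \{x^*\}$, arguing by contradiction. Suppose such $g$ and $q$ exist. Since $X$ is a $3$-dimensional metric cone with vertex $x^*$, the isotropy subgroup $G_q$ is a nontrivial finite group, and we may pass to the cone point structure: the segment (ray) from $x^*$ through $q$ is fixed by $g$, and $g$ acts on the normal "slice." The key observation is that a fixed point $q \ne x^*$ of a nontrivial isometry produces, after blowing up at $q$, a tangent cone $T_qX$ on which a nontrivial finite group acts with a fixed point whose fixed-point set has positive dimension — so the quotient $T_qX/G_q$ will be a metric cone that fails to look like a manifold point in the expected way. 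More precisely, I would combine this with the topological input already available: by \cite[Theorem 8.1]{BPS24} the relevant cross sections are homeomorphic to $S^2$, and $X \approx \R^4$ topologically (or $X \approx C(S^3/\Gamma_0)$ for the appropriate $\Gamma_0 < O(4)$), so a nontrivial finite group action on $X$ with a fixed point off the vertex would force, via Smith theory / P.A. Smith fixed point theorems, a fixed-point set that is incompatible with the manifold-point regularity of $Y = X/G$ at the image of $q$.

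The cleaner route, which I would actually carry out, uses the covering structure downstairs rather than analyzing $X$ directly. Recall from Lemma \ref{StabilityOfFiniteGroupsAction} that for large $i$ we have an honest isomorphism $\psi : G \to \Gamma$ realized by an $\epsilon_i$-equivariant Gromov-Hausdorff approximation, so the $G$-action on $(\R^{n-4}\times C(X))$ is, at every finite scale, modeled by the deck action of $\Gamma$ on $\tilde M_i$, which is free and properly discontinuous. I would argue: if $g \in G$ fixes $q \ne x^*$, choose points $\check q_i \in \tilde M_i$ converging to $q$ (in the sense of diagram \eqref{diag2.5}), and let $\gamma := \psi(g) \in \Gamma$. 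Equivariance of $f_i$ forces $d(\gamma \cdot \check q_i, \check q_i) \to 0$ as $i \to \infty$. On the other hand, $\gamma$ is a fixed nontrivial deck transformation of $\tilde M = r_i \tilde M_i$, so $d_{\tilde M}(\gamma \cdot x, x) \ge \ell > 0$ for all $x$, where $\ell$ is the systole (the minimal displacement of $\gamma$ on the fixed manifold $\tilde M$, which is positive since $\Gamma$ acts freely, properly discontinuously, cocompactly-up-to-$M$). Rescaling, $d_{\tilde M_i}(\gamma\cdot \check q_i, \check q_i) \ge \ell/r_i \to 0$ — which does NOT immediately give a contradiction, so I need $\check q_i$ to stay at bounded distance from $\tilde p_i$ and use that the displacement, while shrinking, is the displacement of a \emph{fixed} group element whose action is conjugate (via $\psi$, $f_i$) to that of $g$; since $g$ genuinely fixes $q$, the limit displacement is $0$, consistent, so the real contradiction must come from \emph{nontriviality}: I would instead show that $g$ fixing a point off the vertex forces $g$ to act trivially on a neighborhood, hence (by analyticity of the limiting cone structure / by the no-fixed-codimension-$1$-set principle for orientation-preserving isometries of low-dimensional $\mathrm{RCD}$ spaces) $g = e$.

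Thus the argument I would commit to is: (i) $G$ acts on the $3$-dimensional non-collapsed $\mathrm{RCD}(1,2)$-type cross section $X$ fixing the vertex; (ii) by the structure theory of \cite{BPS24}, $X$ is topologically $\R^4$ and its link is $S^3/\Gamma_0$; (iii) any nontrivial $g \in G$ with a fixed point $q \ne x^*$ would have fixed-point set containing the whole ray $x^*q$ and, by the cone structure, a fixed set of Hausdorff dimension $\ge 1$ passing through a "regular" (manifold) point of $X$; (iv) near such a regular point $X$ is bi-Hölder to $\R^4$ and $g$ acts as a finite-order isometry of a space infinitesimally $\R^4$ with a $\ge 1$-dimensional fixed set, so the quotient near that point is not a manifold point unless the local action is a rotation with \emph{isolated} fixed locus — reconciling this with $q$ lying on a whole fixed segment forces the local action to be trivial, hence $g$ is trivial on an open set, hence $g = e$ by the RCD unique-continuation/rigidity for isometries. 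The main obstacle I anticipate is step (iv): making rigorous the statement that an isometry of a $3$-dimensional non-collapsed $\mathrm{RCD}$ cross section that fixes a segment through a regular point must be the identity. I expect this to follow from the $\epsilon$-regularity of \cite{BPS24} combined with the classical fact that isometries of Riemannian manifolds fixing a hypersurface-or-more are trivial, transplanted to the limit via the harmonic-coordinates/smooth-approximation available on the regular set; alternatively, one can bypass it entirely by invoking the cited result of \cite{BPS24} that the asymptotic cone is \emph{homeomorphic} to $C(S^3/\Gamma_0)$ with $\Gamma_0$ acting \emph{freely} on $S^3$, which already packages the freeness — in which case the claim is essentially a restatement, and the only work is checking that the $G$-action here is the one inducing that identification, which the equivariant diagram \eqref{diag2.5} and Lemma \ref{StabilityOfFiniteGroupsAction} supply.
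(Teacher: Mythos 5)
There is a genuine gap, and it is structural: every route you commit to tries to derive the contradiction from the limit space alone (the cone, its cross section, and the quotient $Y$), but freeness simply cannot be detected at that level. A nontrivial finite group can perfectly well act isometrically on a metric cone fixing a whole ray off the vertex (a rotation about an axis), and the quotient can still be a topological manifold — e.g. $\R^4/\Z_k$ rotating about a $2$-plane is homeomorphic to $\R^4$, and $S^3$ modulo a rotation about a circle is again $S^3$ — so your step (iv) ("the quotient near that point is not a manifold point unless the fixed locus is isolated") is false, and the assertion that an isometry of the $3$-dimensional cross section fixing a segment through a regular point must be the identity is also false (rotation about a geodesic). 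Your Smith-theory variant fails for the same reason, and it also misreads the setup: $X$ is the cross section (homeomorphic to $S^3/\Gamma_X$), not the cone, and it is not $\R^4$. Finally, the "alternative" of saying the claim is packaged in \cite{BPS24} conflates two different groups: $\Gamma_0$ (or $\Gamma_X$) is the group appearing in the topological identification $X\approx S^3/\Gamma_X$, whereas $G$ is the limit of the deck action; the Eguchi–Hanson example in the introduction ($\pi_1(M)$ trivial, cross section $S^3/\Z_2$) shows these need not agree, so no freeness statement for $G$ is being restated. Your displacement argument via Lemma \ref{StabilityOfFiniteGroupsAction} you yourself concede yields no contradiction.

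What the paper actually does — and what is missing from your proposal — is to exploit that $G$ arises as a limit of \emph{free} deck actions on noncollapsed manifolds, through Lemma \ref{KeyLemma}. Assuming a nontrivial subgroup $H<G$ fixes $x_0\in X$, one passes to the point $q=(0^{n-4},1,x_0)$ in $\R^{n-4}\times C(X)$; since $q$ is not a vertex, cone splitting shows every tangent cone at $q$ is $\R^{n-3}\times C(Z)$, i.e. blowing up at the fixed point gains one more Euclidean factor. Choosing base points $\tilde x_i\to q$ in $\tilde M_i$ and scales $s_i\to\infty$ diagonally so that $(s_i\tilde M_i,\tilde x_i,\psi(H))\GH(\R^{n-3}\times C(Z),(0^{n-3},z^*),K)$ with $s_i\,\diam^{\tilde M_i}(\psi(H)(\tilde x_i))\to 0$, and noting $K$ fixes $\R^{n-3}\times\{z^*\}$ so that the quotients converge to $\R^{n-3}\times C(Z/K)$, one applies Lemma \ref{KeyLemma} to the normal covers $s_i\tilde M_i\to s_i\tilde M_i/\psi(H)$: since the deck orbits shrink and the base limit splits an $\R^{n-3}$-factor, $\psi(H)$ is eventually trivial, hence $H$ is trivial — contradiction. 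The key input is thus the codimension-$3$ triviality lemma (built on the level-set topology of \cite{BPS24}) applied after rescaling at the would-be fixed point, not any rigidity of isometries or topology of the quotient of the limit cone.
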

\begin{proof}
	Suppose on the contrary, there exist some $x_{0}\in X$ and a non-trivial subgroup $H$ of $G$ such that $H(x_{0})=\{x_{0}\}$. Let $q:=(0^{n-4},1,x_0)\in\R^{n-4}\times C(X)$. Then there exists a sequence of $\tilde x_i\in \tilde M_i$ such that $(\tilde M_i,\tilde x_i,\psi(H))\GH(\R^{n-4}\times C(X),q,H)$, and $\diam^{\tilde M_i} (\psi(H)(\tilde x_i))\to0$.

By the cone splitting principle, every tangent cone of $\R^{n-4}\times C(X)$ at $q$ is isometric to $(\R^{n-3}\times C(Z),(0^{n-3},z^*))$ for some $Z$.
By the standard diagonal argument, up to a subsequence, we may assume that there exist $s_i\to\infty$ satisfying
	\customitemize{p}
		
		\item \label{itm:p1}
		$(s_i(\R^{n-4}\times C(X)),q,H)\GH(\R^{n-3}\times C(Z),(0^{n-3},z^*),K)$,
		\item \label{itm:p2}
		$(s_i\tilde M_i,\tilde x_i,\psi(H))\GH(\R^{n-3}\times C(Z),(0^{n-3},z^*),K)$,
		\item \label{itm:p3}
		$s_ir_i^{-1}\to0$ and $s_i\diam^{\tilde M_i} (\psi(H)(\tilde x_i))\to0$.
		
	\end{enumerate}
	
	Note that the $H$-action on $\R^{n-4}\times C(X)$ fixes all point of the form $(a,r,x_{0})\in C(X)$, where $a\in\R^{n-4}$, $r\ge0$. Hence by (p1), $K$ acts on $\R^{n-3}\times \{z^*\}$ trivially. Consequently $K$ induces an isometric action on $Z$. Hence by (\ref{itm:p2}), $$(s_i\tilde M_i/\psi(H),[\tilde x_i])\GH(\R^{n-3}\times C(Z/K),(0^{n-3},[z^*])).$$
	
	It is easy to verify that there exists $v > 0$ such that $\vol(B_1([\tilde{x}_i])) > v$. Combining this with (\ref{itm:p3}), we are now in a position to apply Lemma \ref{KeyLemma} to the above convergent sequence, which implies that $\psi(H)$ is trivial. Consequently, $H$ is trivial, which leads to a contradiction. 	
\end{proof}

We continue to study the commutative diagram (\ref{diag2.5}).
According to \cite[Theorem 1.4 ]{BPS24}, $X$ (and $Y$ respectively) is homeomorphic to some $3$-dimensional space form $S^{3}/\Gamma_{X}$ (and $S^{3}/\Gamma_{Y}$ respectively). By Claim \ref{ClaimFreeAction}, $X$ is a normal cover of $Y$ with deck transformation $G=\Gamma$. This concludes that $\Gamma_X$ is a normal subgroup of $\Gamma_Y$, and $\Gamma$ is isomorphic to $\Gamma_Y/\Gamma_X$.
According to \cite{Wolf11}, the fundamental group of a spherical $n$-manifold always contains a normal cyclic subgroup of index at most $C(n)$ (see \cite{Rong96}, \cite{Rong24} for some extentions).
Thus $\Gamma$ contains a normal cyclic subgroup of index at most $C=C(3)$.
The proof is complete.

\subsection{Proof of Corollary \ref{cor-Polarness}}\label{section6.3}

Without loss of generality, we assume that $M$ is not flat.

If $\pi_1(M)$ is finite, then $M$ has Euclidean volume growth. By Cheeger-Colding's theory, any asymptotic cone of $M$ is a metric cone with the reference point as a cone vertex. So the first conclusion holds. And the second conclusion holds trivially.

If $\pi_1(M)$ is infinite, then by Theorem \ref{thm-Almost Abelian}, $\pi_1(M)$ is virtually abelian. Let $\Gamma$ be the abelian subgroup of $\pi_1(M)$ with finite index.
Consider the following diagram for $r_i\to\infty$:
\begin{equation*}
	\xymatrix@C=2.5cm{
		(r_i^{-1}\tilde{M}, \tilde{p},\Gamma,\pi_1(M)) \ar[d]_{} \ar[r]^{GH} &(X,x^*,H,G) \ar[d]^{} \\
		(r_i^{-1} M,  p) \ar[r]^{GH} &
		(Y,y^*).}
\end{equation*}

Applying Theorem \ref{OrbitisConnected} to $\Omega(\tilde M,\Gamma)$, we conclude that $H(x^*)$ is isometric to $\R^l$, where $l\in\{1,2\}$ does not depend on the choice of $r_i$. Since $\pi_1(M)/\Gamma$ is finite, it follows that $G(x^*)=H(x^*)$. So $G(x^*)$ is isometric to $\R^l$, where $l$ is described as above. This implies that $Y=X/G$ has a pole at $y^*$, which proves the first conclusion. And the second conclusion follows by applying \cite[Proposition 4.2]{Pan22}.

\section{Appendix}\label{section7}

\subsection{Proof of Lemma \ref{Lem17:28}}

\begin{proof}[Proof of Lemma \ref{Lem17:28} (1)]
	We assume that
	\begin{equation}\label{0:59}
		(r_i^{-1}M,p,\Gamma)\GH(Y,y^*,G),
	\end{equation}
	for some $r_i\to\infty$. By Colding-Naber \cite{CN12}, $G$ is a Lie group.
	
We claim that for each $r>0$, there exists $g\in G$, such that $g(y^*)\in \partial B_r(y^*)$. This obviously implies the required conclusion. Let $\{\gamma_1,\gamma_2,\ldots,\gamma_m\}\subset\Gamma$ be a generator of $\Gamma$.
Since $\Gamma(p)$ is unbounded, for each $i$, there exists $\omega_i\in\Gamma$ such that $d_i(\omega_i(p),p)>r$, where $d_i$ is the distance on $r_i^{-1}M$. Assume $\omega_i=\gamma_{j_1}\gamma_{j_2}...\gamma_{j_{k_i}}$. For each $s=1,2,\ldots,k_{i}$, put $x_{i,s}:=\gamma_{j_1}\gamma_{j_2}...\gamma_{j_{s}}(p)$ and $x_{i,0}:=p$. Then $$d_i(x_{i,0},p)<r<d_i(x_{i,k_i},p),\,\quad d_i(x_{i,s},x_{i,s+1})=r_i^{-1}d(p,\gamma_{j_{s+1}}(p))\le r_i^{-1}D,$$where $D:=\max\{d(\gamma_j(p),p)|j=1,2,\ldots,m\}$. Choose $s_i$ such that $$d_i(x_{i,s_i},p)\le r<d_i(x_{i,s_i+1},p).$$
Then up to a subsequence, we assume that $\omega_i':=\gamma_{j_1}\gamma_{j_2}...\gamma_{j_{s_i}}$ and $\omega_i'':=\gamma_{j_1}\gamma_{j_2}...\gamma_{j_{s_i+1}}$ converges to $\omega_0,\omega_1\in G$ respect to (\ref{0:59}), respectively. Hence $$d(\omega_0(y^*),y^*)\le r\le d(\omega_1(y^*),y^*),\,\quad d(\omega_0(y^*),\omega_1(y^*))=0$$
	which implies $\omega_0(y^*)\in\partial B_r(y^*)$. The claim follows.	
	
\end{proof}

\begin{proof}[Proof of Lemma \ref{Lem17:28} (2)]
We assume
\begin{equation*}
		(r_i^{-1}M,p,\spa{\gamma})\GH(Y,y^*,H),
	\end{equation*}
	for some $r_i\to\infty$. Then $H$ is an abelian closed subgroup of $\Isom(Y)$. Again by Colding-Naber \cite{CN12}, $H$ is a Lie group.
	
	By the assumptions of $\gamma$, $\spa{\gamma}(p)$ is unbounded. By Lemma \ref{Lem17:28} (1), $H(y^*)$ is unbounded and not discrete. So the identity component $H_0$ of $H$ has a positive dimension. Assume that $H_0$ is isomorphic to $\R^l\times T^s$. If we can show that $H_0(y^*)$ is unbounded, then it follows that $l \ge 1$. Consequently, there exists a closed subgroup isomorphic to $\R^l$, denoted by $L$, of $H_0$.
Since $L$ does not contain any nontrivial compact subgroups, we have $G_{x^*}\cap L=\emptyset$, where $G_{x^*}$ is the isotropy of $x^*$.
So $L(y^*)$ is homemorphic to $\R^l$.
	
	Now we verify that $H_0(y^*)$ is unbounded. It suffices to verify that for any $r>1$, $\epsilon\in(0,0.1)$, there exist $x_0:= y^*,x_1,\ldots,x_k\in H(y^*)$, such that for $s=0,1,\ldots,k-1$, $d(x_s,x_{s+1})\le\epsilon$ and $d(x_k,y^*)\ge r$.

Fix a large $i$ such that $r_i^{-1}d(p,\gamma(p))+d_{GH}((r_i^{-1}M,p,\spa{\gamma}),(Y,y^*,H))\le0.01\epsilon$. Let $f_i:(r_i^{-1}M,p)\to(Y,y^*)$ and $\phi_i:\spa{\gamma}\to H$ be a pair of maps which form a $0.01\epsilon$-equivariant Gromov-Hausdorff approximation.
Let $k$ be the minimal integer such that $d_i(\gamma^{k}(p),p)\ge r+\epsilon$.
Then, $d_i(\gamma^{s}(p),\gamma^{s+1}(p))\le 0.2\epsilon$ and $d_{i}(\gamma^{s+1}(p_i),p_i)\le r+2\epsilon$, for $s=0,1,\ldots,k-1$.
Define $x_s:=\phi_i(\gamma^s)(y^*)$, $i=0,1,\ldots,k$. Then it is direct to verify such $x_s$ satisfy the required property.
\end{proof}
	
	If $M$ has Euclidean volume growth, by Cheeger-Colding's theory (\cite{CC96}), $(Y,y^*)$ is isometric to a metric cone $(C(X),y^*)$ with a vertex $y^{*}$.
	Since $G(y^*)$ or $H(y^*)$ is not a single point, we conclude that $Y$ splits an $\R$-factor.

\subsection{Proof of Lemma \ref{T2T3}}

The case that $k(X)=1$ is obvious. And when $k(X)=2$, the structure of $(X,x^*)$ is a simple consequence of Cheeger-Colding's theory. So we just present the proof for the structure of $K$ in the case that $k(X)=2$.

\begin{lem}\label{Lem:Pertainaline}
	Let $f:\R^n\to\R^n$ be a translation defined by $f(v)=v+v_0$, $v_0\in\R^n\setminus \{0^{n}\}$. If $g\in\Isom(\R^n)$ and $f\circ g=g\circ f$, then $g(v+tv_0)=g(v)+tv_0$ for any $v\in\R^n$ and $t\in(-\infty,+\infty)$.
\end{lem}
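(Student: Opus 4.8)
The plan is to use the commutativity relation $f\circ g = g\circ f$ together with the explicit form of an isometry of $\R^n$. First I would write $g(v) = Av + w$ for some $A\in O(n)$ and $w\in\R^n$. The hypothesis $f\circ g = g\circ f$ applied to an arbitrary point $v$ gives $A(v+v_0) + w = (Av+w)+v_0$, i.e. $Av + Av_0 + w = Av + w + v_0$, which simplifies to $Av_0 = v_0$. Thus $v_0$ is a fixed vector of the orthogonal part $A$.

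Having established $Av_0 = v_0$, I would then compute directly: for any $v\in\R^n$ and $t\in(-\infty,+\infty)$,
\[
g(v+tv_0) = A(v+tv_0) + w = Av + t(Av_0) + w = Av + w + tv_0 = g(v) + tv_0,
\]
which is exactly the claimed identity. This finishes the proof.

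I do not expect any genuine obstacle here: the statement is an elementary linear-algebra fact about isometries of Euclidean space, and the only "step" is the one-line observation that commuting with a nontrivial translation forces the orthogonal part to fix the translation vector. The hypothesis $v_0\neq 0^n$ is not even needed for the conclusion as stated (the identity holds trivially when $v_0=0$), but it is harmless to keep it. If one wanted to be slightly more careful, one could note that the argument shows more: $g$ descends to an isometry of the quotient $\R^n/\langle f\rangle$ and preserves the $\R v_0$-foliation, but for the purposes of Lemma \ref{T2T3} the pointwise identity above is all that is required.
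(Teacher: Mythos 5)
Your proposal is correct and follows essentially the same argument as the paper: write $g(v)=Av+w$ with $A\in O(n)$, use $f\circ g=g\circ f$ to deduce $Av_0=v_0$, and then compute $g(v+tv_0)=Av+tAv_0+w=g(v)+tv_0$. There is nothing to add or fix.
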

\begin{proof}
	Let $g(v)=Av+w_0$, where $(A,w_0)\in O(n)\ltimes\R^n$. So by $f\circ g=g\circ f$, $Av_0=v_0$. Therefore, $g(v+tv_0)=Av+tAv_0+w_0=(Av+w_0)+tv_0=g(v)+tv_0$.
\end{proof}

\begin{lem}\label{R-actionOnR^2}
	Let $L$ be a closed subgroup of $\Isom(\R^2)$ which is isomorphic to $\R$. Then $L$ acts on $\R^2$ by translation.
\end{lem}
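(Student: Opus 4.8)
The plan is to use the classification of closed one-parameter subgroups of the isometry group $\Isom(\R^2) = \Ort(2)\ltimes\R^2$. Write a generic element of $\Isom(\R^2)$ as a pair $(A,w)$ with $A\in\Ort(2)$, $w\in\R^2$, acting by $v\mapsto Av+w$. Since $L\cong\R$ is connected, its image under the projection $\Isom(\R^2)\to\Ort(2)$ lands in $\SO(2)$; thus every element of $L$ is of the form $(R_\theta, w)$, a rotation (possibly trivial) composed with a translation. First I would dispose of the case where the rotational part is nontrivial: if some $(R_\theta,w)\in L$ has $\theta\notin 2\pi\Z$, then $(R_\theta,w)$ is a rotation of $\R^2$ about some fixed point $c\in\R^2$, hence lies in a compact subgroup (the circle of rotations about $c$). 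Since $L\cong\R$ has no nontrivial compact subgroups, the closure in $L$ of the group generated by $(R_\theta,w)$ — which is either finite or the full circle about $c$ — must be trivial, contradicting $\theta\notin 2\pi\Z$. Hence every element of $L$ has trivial rotational part, i.e. $L$ consists entirely of translations.

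Next I would show $L$ acts by translation along a single line. Since $L$ is an abelian connected Lie group of translations, it is the image of a continuous homomorphism $\R\to\R^2$, $t\mapsto (I, t\mapsto \phi(t))$ with $\phi$ additive and continuous, hence linear: $\phi(t)=t v_0$ for some fixed $v_0\in\R^2$. If $v_0=0$ then $L$ is trivial, contradicting $L\cong\R$; so $v_0\neq0$ and $L=\{\,v\mapsto v+tv_0 : t\in\R\,\}$ is precisely the group of translations along the line $\R v_0$. This is the desired conclusion. (One should also note closedness is automatic here, or rather is exactly what forces $v_0\neq 0$ together with injectivity; a rational-slope-free irrationality issue does not arise in $\R^2$ since the image of a single line under a linear map is automatically closed.)

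The main obstacle — really the only nontrivial point — is ruling out the case of nontrivial rotational part, i.e. showing a one-parameter subgroup containing a genuine screw-type element (rotation-plus-translation) cannot be isomorphic to $\R$ as a \emph{closed} subgroup. The clean way is the observation above that in dimension $2$ any orientation-preserving isometry with nontrivial linear part has a fixed point, hence generates a relatively compact subgroup, which is incompatible with being (a subgroup of) $\R$. Everything else is the standard fact that a continuous additive map $\R\to\R^2$ is linear.
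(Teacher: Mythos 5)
Your proof is correct, and its overall skeleton matches the paper's: project $L$ to $\Ort(2)$, use connectedness of $L\cong\R$ to land in $\SO(2)$, and then rule out any nontrivial rotational part. The difference is in that last exclusion step. The paper argues that $L$, being torsion-free, acts freely, so for a nontrivial $g(v)=Av+v_0$ the equation $(I-A)v=v_0$ has no solution, whence $A$ has eigenvalue $1$; combined with $\det A>0$ this forces $A=I$. You instead note that an orientation-preserving isometry with nontrivial rotational part fixes a point, hence lies in a compact stabilizer, so the closure of the cyclic group it generates is a nontrivial compact (or torsion) subgroup of the closed group $L$, impossible since $\R$ has no such subgroups. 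Your variant is in fact slightly more self-contained: the paper's assertion that torsion-freeness alone makes the action free is a little quick (an irrational rotation is torsion-free yet fixes a point), and the clean justification is precisely the compact-stabilizer/closure argument you give. Your final step identifying $L$ as the translations $v\mapsto v+tv_0$ along a single line is a correct bonus beyond the stated conclusion; the paper only needs that $L$ consists of translations and performs the normalization along a coordinate line later, when proving the classification of $K$.
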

\begin{proof}

Let $\mathrm{Pj}: \Isom(\R^2)\rightarrow O(2)$ be the standard homomorphism given by $\mathrm{Pj}(g)=A$, for $g(v)=Av+v_0$, $\forall v\in \R^2$. Since $\mathrm{Pj}$ is a Lie group homomorphism, it concludes that $\mathrm{Pj}(L)$ is a connected subgroup of $O(2)$. Hence $\mathrm{Pj}(L)$ is trivial or contains all rotations.

Note that since $L$ is torsion-free, the $L$-action is free. So for any non-trivial $g\in L$ with $g(v)=Av+v_0$, the equation $g(v)=v$, i.e., $(I-A)v=v_0$, with respect to $v$, has no solution. In particular, $A$ has an eigenvalue $1$. Based on the discussion in the previous paragraph, the determinant of $A$ is positive. So the other eigenvalue of $A$ is also $1$. This concludes $A=I$ and $\mathrm{Pj}(L)=\{I\}$.

\end{proof}

The structure of $K$ as in Lemma \ref{T2T3} is implied by the following lemma.
\begin{lem}
	Let $K$ be a closed abelian subgroup of $\Isom (\R^2)$. If $K$ contains a closed $\R$-subgroup, then up to a re-choice of the normal coordinate of $\R^2$, $K$ satisfies one of the following.
	\begin{enumerate}
		\item If $K(0^2)$ is connected, then one of the following holds.
		\customitemize{a}

			\item \label{itm:a1'} $K=\spa{t_1,s_2|t,s\in(-\infty,+\infty)}$.
			
			\item \label{itm:a2'} $K=\spa{t_1,r_0|t\in(-\infty,+\infty)}$.
			
			\item \label{itm:a0'} $K=\spa{t_1|t\in(-\infty,+\infty)}$.
		\end{enumerate}
		
		\item  If $K(0^2)$ is not connected, then one of the following holds.
		\customitemize{b}
			\item \label{itm:b1'} There exists $a>0 $ such that $K=\spa{t_1,a_2|t\in(-\infty,+\infty)}$.
			
			\item \label{itm:b2'} There exists $b>0 $ such that $K=\spa{t_1,r_b|t\in(-\infty,+\infty)}$.
		\end{enumerate}
	\end{enumerate}
	
	Where $t_1,s_2,a_2,r_b$ are defined as in Lemma \ref{T2T3}.
	
\end{lem}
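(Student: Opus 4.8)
The plan is to pin down the elements of $K$ explicitly, using commutativity together with the two auxiliary lemmas above, and then to run through the (short) list of closed subgroups of the translation group of $\R^2$.

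First I would apply Lemma \ref{R-actionOnR^2} to the given closed $\R$-subgroup $L\le K$: it acts on $\R^2$ by translations, so $L=\{v\mapsto v+tv_0:t\in\R\}$ for some fixed $v_0\neq 0^2$. Choosing the (origin-preserving) normal coordinate so that $v_0$ lies along the first axis, we may assume $L=\{t_1:t\in\R\}\le K$. Now fix any $g\in K$ and write $g(v)=Av+w_0$ with $A\in O(2)$. Since $K$ is abelian, $g$ commutes with $t_1$, so Lemma \ref{Lem:Pertainaline} gives $Av_0=v_0$, whence $A=I$ or $A=\mathrm{diag}(1,-1)$. Thus $K$ is a disjoint union of a set of \emph{type I} maps $g(x,y)=(x+a,y+b)$ and a set of \emph{type II} maps $g(x,y)=(x+a,b-y)$; composing a type II map with a suitable $t_1\in K$ turns it into $r_b$ for some $b\in\R$.

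Let $K^{+}\le K$ denote the subgroup of type I elements. It is a closed subgroup of $(\R^2,+)$ containing the line $\R e_1$, hence $K^{+}$ equals $\R e_1$, or $\R e_1\oplus a\Z e_2$ for some $a>0$, or $\R^2$. If $K=K^{+}$, these three options are precisely (\ref{itm:a0'}), (\ref{itm:b1'}), and (\ref{itm:a1'}). If instead $K$ contains a type II element, we may take it to be some $r_b$; for $K$ to be abelian, $r_b$ must commute with every element of $K^{+}$, and since $r_b$ commutes with a translation exactly when that translation is parallel to $e_1$, this forces $K^{+}=\R e_1$, so $K=\spa{t_1,r_b}$. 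Conjugating by the reflection $(x,y)\mapsto(x,-y)$, which is an admissible change of normal coordinate since it fixes $0^2$ and preserves $L$, replaces $r_b$ by $r_{-b}$, so we may assume $b\ge 0$; then $b=0$ gives (\ref{itm:a2'}) and $b>0$ gives (\ref{itm:b2'}). Here it is essential that the remaining coordinate freedom is only $O(2)$ fixing $0^2$: allowing a translation in the $e_2$-direction would conjugate $r_b$ to $r_0$ and illegitimately collapse (\ref{itm:b2'}) into (\ref{itm:a2'}).

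Finally I would verify that the connectedness of the orbit $K(0^2)$ separates the two families as claimed: for $K=\spa{t_1}$, $K=\R^2$, and $K=\spa{t_1,r_0}$ the orbit $K(0^2)$ is a line, the whole plane, and a line respectively, hence connected, yielding (\ref{itm:a0'}), (\ref{itm:a1'}), (\ref{itm:a2'}); for $K=\spa{t_1,a_2}$ and $K=\spa{t_1,r_b}$ with $b>0$ the orbit is a nonconnected union of parallel lines, yielding (\ref{itm:b1'}), (\ref{itm:b2'}). I expect the only mildly substantive point to be the commutativity computation ruling out translations with nonzero $e_2$-component inside $K^{+}$ once a type II element is present; the rest is the classification of closed subgroups of $\R^2$ together with careful tracking of which normal coordinate changes survive once $L$ has been normalized.
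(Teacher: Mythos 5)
Your proof is correct, and it reaches the paper's classification by the same basic mechanism---normalizing the closed $\R$-subgroup to the horizontal translations $\{t_1\}$ via Lemma \ref{R-actionOnR^2} and then exploiting commutation with these translations via Lemma \ref{Lem:Pertainaline}---but the bookkeeping is organized differently. The paper works element-by-element: it introduces $K_1=\{g\in K\mid g\neq\id,\ g(0,0)=(0,b)\}$, pins down each such $g$ by explicit distance computations (its Claims for $b=0$ and $b\neq0$), uses abelianness to forbid mixing the three resulting forms, and runs an infimum-plus-closedness argument to produce the lattice case (\ref{itm:b1'}). You instead note that commutation forces the orthogonal part $A$ to fix $e_1$, so $A\in\{I,\mathrm{diag}(1,-1)\}$, split $K$ into the closed translation subgroup $K^{+}\supseteq\R e_1$ and the reflection-type part, and invoke the classification of closed subgroups of $(\R^2,+)$ containing a line, which packages the paper's infimum argument; commutativity with a reflection-type element then forces $K^{+}=\R e_1$. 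Two small points: in the reflection case you should add the one-line observation that all type II elements of $K$ share the same $b$ (the product of two of them is a vertical translation by the difference of their $b$-values, which must lie in $K^{+}=\R e_1$), so that $K=\spa{t_1,r_b}$ genuinely follows; and your explicit reduction to $b\ge 0$ by conjugating with the reflection fixing $0^2$, together with the remark that only origin-preserving coordinate changes are admissible (otherwise (\ref{itm:b2'}) would collapse into (\ref{itm:a2'})), is a point the paper leaves implicit, since its proof only yields $b\neq 0$ in case (\ref{itm:b2'}).
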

\begin{proof}
	
	Since $K$ contains a closed $\R$-subgroup, by Lemma \ref{R-actionOnR^2}, this $\R$-subgroup acts on $\R^2$ by translation. So we may re-choose the normal coordinate on $\R^2$ such that this $\R$-action is determined by the $\spa{t_1|t\in(-\infty,+\infty)}$-action. If $K=\spa{t_1|t\in(-\infty,+\infty)}$, this is the case of (\ref{itm:a0'}). Hence we assume $K\backslash\spa{t_1|t\in(-\infty,+\infty)}\neq\emptyset$ below.

	Define $K_1:=\{g\in K|g\neq\id,\,g(0,0)=(0,b)\text{ for some }b\}$. Note that $K$ is generated by $\{t_1|t\in(-\infty,+\infty)\}$ and $K_1$; for any $k\in K$ with $k(0,0)=(a,b)$, $(-a)_1\circ k\in K_1$. So we just need to classify elements of $K_1$.
	
	Fix $g\in K_1$ satisfying $g(0,0)=(0,b)$. For any fixed $y$, let
	\begin{equation}\label{x_1y_1}
		g(0,y)=(x_1,y_1).
	\end{equation}
	Then for any $x$,
\begin{align}\label{7.5}
|(x,y)|=|(0,y)-(x,0)|=|g(0,y)-g(x,0)|=|(x_1,y_1)-(x,b)|=|(x_1-x,y_1-b)|,
\end{align}
where we have used Lemma \ref{Lem:Pertainaline} in the third equality.
By the arbitrariness of $x$ in (\ref{7.5}), it concludes that
	\begin{equation}\label{13:08}
		x_1=0,\,|y|=|y_1-b|.
	\end{equation}

	\begin{claim}\label{b=0}
		If $b=0$, then $g(x,y)=(x,-y)$.
	\end{claim}
	\begin{proof}
Choosing $y=1$ in (\ref{x_1y_1}), by (\ref{13:08}), we have either $g(0,1)=(0,1)$ or $g(0,1)=(0,-1)$.
Note that
$$g(0,0)=(0,0),\,g(1,0)=g(0,0)+(1,0)=(1,0),$$
where the second equality is due to Lemma \ref{Lem:Pertainaline}.
If $g(0,1)=(0,1)$, then $g$ is the identity, contradicting to $g\in K_{1}$. So $g(0,1)=(0,-1)$. The only possibility is that $g(x,y)=(x,-y)$.
	\end{proof}
	
	\begin{claim}\label{bnot0}
		If $b\neq 0$, then $g(x,y)=(x,y+b)$ or $g(x,y)=(x,b-y)$.
	\end{claim}
	\begin{proof}	
	Choosing $y=b$ in (\ref{x_1y_1}), by (\ref{13:08}), we conclude that one of the following cases holds,
	\customitemize{d}
		\item \label{itm:d1} $g(0,b)=(0,2b)$.
		\item \label{itm:d2} $g(0,b)=(0,0)$.
	\end{enumerate}

	Note that we also have $$g(0,0)=(0,b),\,g(1,0)=g(0,0)+(1,0)=(1,b).$$
	Hence the above two equalities and either of (\ref{itm:d1}) and (\ref{itm:d2}) determine a unique isometry. So $g(x,y)=(x,y+b)$ or $g(x,y)=(x,b-y)$.
	\end{proof}
	By combining Claim \ref{b=0} and Claim \ref{bnot0}, it follows that an element $g$ of $K_1$ can take the form of one of the following:
	\customitemize{f}
		\item \label{itm:f1} $g(x,y)=(x,-y)$.
		
		\item \label{itm:f2} $g(x,y)=(x,b-y)$, $b\neq 0$.
		
		\item \label{itm:f3} $g(x,y)=(x,y+b)$, $b\neq 0$.
	\end{enumerate}
	Due to the abelianness of $K$, $K_1$ cannot contain elements of any two of the above forms. Therefore, we divide the remaining discussion into three cases based on the form of the elements of $K_1$.
	
	If the elements of $K_1$ take the form (\ref{itm:f1}), then $K_1$ only contains a unique element $g(x,y)=(x,-y)$. This is the case of (\ref{itm:a2'}).
	
	If the elements of $K_1$ take the form (\ref{itm:f2}), then combining the abelianness of $K$, $K_1$ only contains a unique element $g(x,y)=(x,b-y)$, $b\neq 0$. This is the case of (\ref{itm:b2'}).
	
	In the remaining discussion, we assume that the elements of $K_1$ take the form (\ref{itm:f3}). Define $a:=\inf\{|b||(x,y+b)\in K_1\}$. If $a=0$, then by the assumption that $K$ is closed, for any $b\neq 0$, $(x,y+b)\in K_1$, which is the case of (\ref{itm:a1'}). If $a>0$, then again by the closedness of $K$, it follows that $(x, y+a) \in K_1$, which is the case of (\ref{itm:b1'}).
\end{proof}

\subsection{Proof of Lemma \ref{lem3.5}}

Let $H$ be a normal abelian subgroup of $G$ with finite index. For any $g_{1},g_{2}\in G$, there exist $m,n\in \mathbb{Z}^{+}$ such that $g_{1}^{m}\in H$, $g_{2}^{n}\in H$.
By the abelianness of $H$, we have $g_{1}^{m}g_{2}^{n}=g_{2}^{n}g_{1}^{m}$.
Then by \cite[16.2.9 Exercise]{KM79}, we have $g_{1}g_{2}=g_{2}g_{1}$.
Thus $G$ is abelian.

\subsection{Proof of Lemma \ref{StabilityOfFiniteGroupsAction}}

We argue by contradiction. By passing to a subsequence, we may assume that the conclusion fails for every $i$.

By (\ref{1:30}), we may assume $\diam(\Gamma_i(\check p_i))<D$. Then the cardinality of $\Gamma_i$,
\begin{equation}\label{BoundedOrder}
	\#\Gamma_i\le\frac{\vol(B_{1+D}(\check p_i))}{\vol (B_1(p_i))}\le \frac{C(n,D)}{v}.
\end{equation}
Observe that for all large $i$, $\#G\le\#\Gamma_i\le  \frac{C(n,D)}{v}$.

For any $y_i\in B_1(\check p_i)$ and arbitrary $\gamma_1,\gamma_2\in G$, by the definition of equivariant Gromov-Hausdorff approximation and the triangular inequality, we have
\begin{align*}
	&d(f_i(\psi_i(\gamma_1\gamma_2)(y_i)),f_i(\psi_i(\gamma_1)\psi_i(\gamma_2)(y_i)))\\\le& d(f_i(\psi_i(\gamma_1\gamma_2)(y_i)),\gamma_1\gamma_2f_{i}(y_i))+d(\gamma_1\gamma_2f_i(y_i),\gamma_1f_i(\psi(\gamma_2)(y_i)))\\&+d(\gamma_1f_i(\psi(\gamma_2)(y_i)),f_i(\psi_i(\gamma_1)\psi_i(\gamma_2)(y_i)))\\\le&3\epsilon_i,
\end{align*}
which implies
\begin{equation}\label{SmallDisplacement}
	d(y_i,\psi_i(\gamma_1\gamma_2)^{-1}\psi_i(\gamma_1)\psi_i(\gamma_2)(y_i))=d(\psi_i(\gamma_1\gamma_2)(y_i),\psi_i(\gamma_1)\psi_i(\gamma_2)(y_i))\le 4\epsilon_i.
\end{equation}

Let $H_i$ be the subgroup of $\Gamma_i$ generated by $\psi_i(\gamma_1\gamma_2)^{-1}\psi_i(\gamma_1)\psi_i(\gamma_2)$. Combining (\ref{BoundedOrder}) and (\ref{SmallDisplacement}), for any $y_i\in B_1(p_i)$, every orbit $H_i(y_i)$ has diameter $\le 4C(n,D)v^{-1}\epsilon_i$. Now by \cite[Theorem 0.8]{PR18}, for every sufficiently large $i$, $H_i$ is trivial. That implies $\psi_i$ is an homomorphism. Similarly, for large $i$, we can show that $\phi_i$ is an homomorphism. Further, by a similar estimate, we have for any $\gamma_i\in \Gamma_i$, $y_i\in B_1(\check p_i)$,
\begin{equation*}
	d(\gamma_i(y_i),\psi_i(\phi_i(\gamma_i))(y_i))\le 3\epsilon_i.
\end{equation*}
Again by \cite[Theorem 0.8]{PR18}, the above inequality and (\ref{BoundedOrder}) imply $\psi_i\circ\phi_i$ is the identity map of $\Gamma_i$. $\phi_i\circ\psi_i$ is similar. So for large $i$, $\psi_i$ and $\phi_i$ are both isomorphisms, which is a contradiction.

\vspace*{20pt}

\bibliographystyle{alpha}
\bibliography{ref}

\end{document}